\newcommand{\E}{\mathbb{E}}
\newcommand{\prob}{\mathbb{P}}
\DeclareMathOperator*{\argmin}{arg\,min}
\newtheorem{proposition}{Proposition}
\newtheorem{lemma}{Lemma}
\newtheorem{assumption}{Assumption}
\newtheorem{theorem}{Theorem}
\newtheorem{corollary}{Corollary}
\title{Learning from Stochastically Revealed Preference}
\author{John R. Birge$^\dagger$ \and Xiaocheng Li$^\ddagger$ \and  Chunlin Sun$^\Diamond$}
\date{\small 
$^\dagger$ The University of Chicago Booth School of Business,  John.Birge@chicagobooth.edu\\
$^\ddagger$ Imperial College Business School, Imperial College London, xiaocheng.li@imperial.ac.uk\\
$^\Diamond$ Institute for Computational and Mathematical Engineering, Stanford University, chunlin@stanford.edu
}
\begin{document}
\maketitle

\onehalfspacing

\begin{abstract}
We study the learning problem of revealed preference in a stochastic setting: a learner observes the utility-maximizing actions of a set of agents whose utility follows some unknown distribution, and the learner aims to infer the distribution through the observations of actions. The problem can be viewed as a single-constraint special case of the inverse linear optimization problem. Existing works all assume that all the agents share one common utility which can easily be violated under practical contexts. In this paper, we consider two settings for the underlying utility distribution: a Gaussian setting where the customer utility follows the von Mises-Fisher distribution, and a $\delta$-corruption setting where the customer utility distribution concentrates on one fixed vector with high probability and is arbitrarily corrupted otherwise. We devise Bayesian approaches for parameter estimation and develop theoretical guarantees for the recovery of the true parameter. We illustrate the algorithm performance through numerical experiments.  
\end{abstract}

\section{Introduction}

The problem of learning from revealed preference refers to the learning of a common utility function for a set of agents based on the observations of the utility-maximizing actions from the agents. The revealed preference problem has a long history in economics \citep{samuelson1948consumption, afriat1967construction} (See \citep{varian2006revealed} for a review). A line of works \citep{beigman2006learning,zadimoghaddam2012efficiently,balcan2014learning,amin2015online} formulate the problem as a learning problem with two objectives: (i) rationalizing a
set of observations, i.e., to find a utility function which explains a set of past observations; (ii) predicting the future behavior of a utility-maximization agent. Mathematically, the action of the agents is modeled by an optimization problem that maximizes a linear (or concave) utility function subject to one linear budget constraint. The learner (decision maker) aims to learn the unknown utility function through a set of observations of the constraints and the optimal solutions. The problem can be viewed as a single-constraint special case of the inverse optimization problem \citep{ahuja2001inverse} which covers a wider range of applications: geoscience \citep{burton1992instance}, finance \citep{bertsimas2012inverse},  market analysis \citep{birge2017inverse}, energy \citep{aswani2018inverse}, etc.

In this paper, we study the problem under a stochastic setting where the agents have a linear utility function randomly distributed according to some unknown distribution. Such a stochastic setting is well-motivated by some application context where the agents are customers and the constraint models the prices and the customer's budget. The optimal solution encodes the customer's purchase behavior and the stochastic utility (objective function) captures the heterogeneity of the customer preference for the products. The goal of learning in this stochastic setting thus becomes to learn the utility distribution through observations of the actions. To the best of our knowledge, we provide the first result of learning a stochastic utility for the revealed preference problem and even in the more general literature of the inverse optimization problem. 

\textbf{Related Literature:} The existing approaches to the problem can be roughly divided into two categories.

Query-based: In a query-based model, the
learner aims to learn the utility function by querying an oracle for the agent’s optimal actions, and the goal is to derive the sample complexity guarantee for a sufficiently accurate estimation of the utility function. \cite{beigman2006learning} initiates this line of research and studies a statistical setup where the input data is a set of observations and the learner's performance is evaluated by sample complexity bounds. \cite{zadimoghaddam2012efficiently} studies the case of a linear or linearly separable concave utility function, and \cite{balcan2014learning} generalizes the setting and devises learning algorithms for several classes of utility functions. Other than a statistical setup where the observations are sampled from some distribution, both of these two works study an ``active'' learning setting where the learner has the power to choose the linear constraint (set the prices of the products). Some subsequent works along this line study the associated revenue management problem \citep{amin2015online} and a game-theoretic setting \citep{dong2018strategic} where the agents act strategically to hide the true actions. 

Optimization-based: The optimization-based approach is usually adopted in the literature of inverse optimization, and some algorithms developed therein can be applied to the special case of the revealed preference problem. \citep{zhang1996calculating} and \citep{ahuja2001inverse} study the inverse optimization with one single observation and develop linear programming formulations to solve the problem.
Later, \citep{keshavarz2011imputing} and \citep{aswani2018inverse} study the statistical (or data-driven) setting where the observations are sampled from some distribution. Specifically, \citep{aswani2018inverse} considers a setting where the optimal actions of the agents are contaminated with some independent noises, but all the agents still follow a common utility parameter vector. \citep{mohajerin2018data} studies the distributional robust version of the problem and \citep{besbes2021contextual} considers a contextual formulation. A recent line of works \citep{barmann2018online,dong2018generalized, dong2020expert, chen2020online} cast the inverse optimization problem in an online context and develop (online) gradient-based algorithms. 

As we understand, all the existing algorithms and analyses under this topic rely on the assumption that all the agents share one common utility function (or a common utility parameter vector), and thus can fail in the stochastic setting. In this paper, we formulate the problem in Section \ref{sec_model} and focus on the statistical data input where the budget constraint is sampled from some unknown distribution. We consider two stochastic settings: a Gaussian setting in Section \ref{sec_gaussian} and a $\delta$-corruption setting in Section \ref{sec_delta}. We conclude with numerical experiments and discuss (i) how the results can be generalized to the inverse optimization problem and (ii) the implications for the query-based model where the learner has the power to choose the budget constraints.

\section{Model Setup}

\label{sec_model}

Consider a customer who purchases a bundle of products subject to some budget constraint. The customer's utility-maximizing action can be modeled by the following linear program:
\begin{align*}
   \text{LP}(\bm{u}, \bm{a}, b) \coloneqq \max_{\bm{x}}\  & \sum_{i=1}^n u_ix_i \\
\text{s.t.}\     & \sum_{i=1}^n a_i x_i \le b, \quad 0\le x_i\le 1, i=1,...,n,
\end{align*}
where $\bm{u}=(u_1,...,u_n)\in\mathbb{R}^n$, $\bm{a}=(a_1,...,a_n)\in\mathbb{R}^n_+$, and $b\in\mathbb{R}_+$ are the inputs of the LP. Here the decision variables $\bm{x}$ encode the purchase decisions where a partial purchase is allowed, $u_i$ denotes the customer's utility for the $i$-th product, and $a_i$ denotes the price or cost of purchasing the $i$-th product. The right-hand-side $b$ denotes the budget of the customer. 

Throughout this paper, we make the following assumption. 

\begin{assumption}
We assume:
\begin{itemize}
    \item The utility vector $\bm{u}$ follows some unknown distribution $\mathcal{P}_u$.
    \item The LP's input $(\bm{a}, b)$ follows some unknown distribution  $\mathcal{P}_{\bm{a},b}$ independent of $\mathcal{P}_{u}$. 
    \item There exists $\underline{a}>0$ such that $a_i \in [\underline{a},1]$ almost surely for $i=1,...,n$.
\end{itemize} 
\label{assp_distr}
\end{assumption}

Our goal is to infer the distribution through observations of customers' optimal actions. Mathematically, we aim to estimate the distribution of $\mathcal{P}_u$ through the dataset
$$\mathcal{D}_T = \left\{\left(\bm{x}_{t}^*,\bm{a}_t,b_t\right)\right\}_{t=1}^T.$$
Here the $t$-th sample corresponds to an unobservable $\bm{u}_t$ generated from $\mathcal{P}_u$ and $\bm{x}_t^*$ is one optimal solution of $\text{LP}(\bm{u}_t, \bm{a}_t, b_t).$
Due to the scale invariance of the utility vector, we restrict the distribution $\mathcal{P}_u$ to the unit sphere $\mathcal{S}^{n-1}=\{\bm{u}:\|\bm{u}\|_2=1\}$. In the following two sections, we consider two settings: (i) Gaussian: $\mathcal{P}_u$ follows the von Mises–Fisher distribution, i.e., the restriction of a multivariate Gaussian distribution to the unit sphere; (ii) $\delta$-corruption: $\mathcal{P}_u$ concentrates on one point $\bm{u}^*$ with probability $1-\delta$ and follows an arbitrarily corrupted distribution with probability $\delta$.

\begin{figure}[ht!]
    \centering
    \includegraphics[scale=0.35]{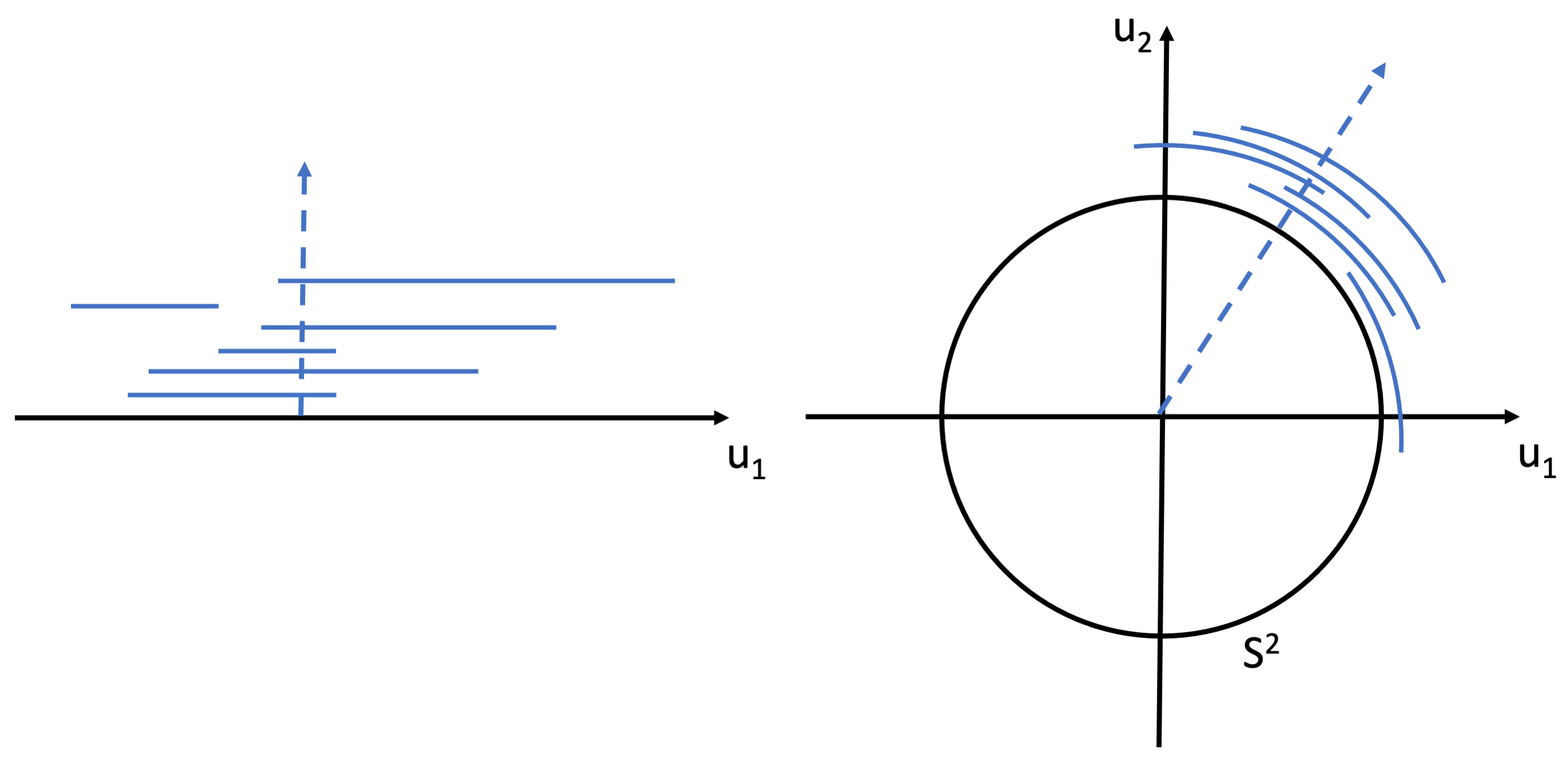}
    \caption{Visualizing the challenge of the problem in 1-D and 2-D.}
    \label{fig:vis}
\end{figure}

\paragraph{The challenge of the problem.} 

Each observation $(\bm{x}_t^*, \bm{a}_t, b_t)$ prescribes a region $\mathcal{U}_t\subset \mathcal{S}^{n-1}$,
$$\mathcal{U}_t\coloneqq\left\{\bm{u}\in \mathcal{S}^{n-1}: \bm{x}_t^* \text{ is an optimal solution of } \text{LP}(\bm{u}, \bm{a}_t, b_t)\right\}.$$
The set $\mathcal{U}_t$ captures all the possible values of $\bm{u}_t$ that is consistent with the $t$-th observation. The following lemma states that the set $\mathcal{U}_t$ can be expressed by a group of linear constraints.

\begin{lemma}
\label{lemma_linear}
For each $\mathcal{U}_t$, there exists a matrix $\bm{V}_t$ and a vector $\bm{w}_t$ such that  
$$\mathcal{U}_t = \left\{\bm{u}\in\mathcal{S}^{n-1}: \bm{V}_t \bm{u} \le \bm{w}_t\right\}.$$
\end{lemma}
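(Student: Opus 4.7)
The plan is to invoke LP duality to turn the statement ``$\bm{x}_t^*$ is optimal for $\text{LP}(\bm{u},\bm{a}_t,b_t)$'' into an explicit finite system of linear (in)equalities in $\bm{u}$. Since the feasible region of the primal LP has infinitely many points, it would be awkward to work directly with the definition $\bm{u}^\top \bm{x}_t^* \ge \bm{u}^\top \bm{x}$ for all feasible $\bm{x}$; duality gives a polyhedral certificate of optimality that has a bounded number of constraints.

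The first step is to write the dual of $\text{LP}(\bm{u},\bm{a}_t,b_t)$: introducing the scalar multiplier $\lambda\ge 0$ for the budget constraint and the vector $\bm{\mu}\ge \bm{0}$ for the upper bounds $x_i\le 1$, the dual reads $\min\,\lambda b_t+\bm{1}^\top\bm{\mu}$ subject to $\lambda\bm{a}_t+\bm{\mu}\ge \bm{u}$, $\lambda\ge 0$, $\bm{\mu}\ge \bm{0}$. By LP strong duality, $\bm{x}_t^*$ is primal optimal iff there exists a dual-feasible $(\lambda,\bm{\mu})$ satisfying complementary slackness with $\bm{x}_t^*$. Because the coordinates of $\bm{x}_t^*$ are fixed numbers, each complementary slackness identity is linear in $(\bm{u},\lambda,\bm{\mu})$; concretely, partitioning indices into $I_1=\{i:x_{t,i}^*=1\}$, $I_0=\{i:x_{t,i}^*=0\}$, $I_f=\{i:0<x_{t,i}^*<1\}$, the conditions reduce to $u_i\ge \lambda a_{t,i}$ for $i\in I_1$, $u_i=\lambda a_{t,i}$ for $i\in I_f$, $u_i\le \lambda a_{t,i}$ for $i\in I_0$, together with $\lambda\ge 0$ and, if the budget is not tight at $\bm{x}_t^*$, $\lambda=0$.

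The second step is to eliminate the single auxiliary scalar $\lambda$. This is a one-variable projection of a polyhedron, so by (a trivial instance of) Fourier--Motzkin elimination the projection onto $\bm{u}$-space is again a polyhedron and can be written in the desired form $\bm{V}_t\bm{u}\le \bm{w}_t$. If one prefers to avoid appealing to Fourier--Motzkin, a direct case analysis suffices: when $I_f\neq\emptyset$, any $k\in I_f$ forces $\lambda=u_k/a_{t,k}$ and the remaining conditions become the linear inequalities $a_{t,k}u_i\ge a_{t,i}u_k$ for $i\in I_1$, $a_{t,k}u_j\le a_{t,i}u_k$ for $j\in I_0$, the equalities $a_{t,k'}u_k=a_{t,k}u_{k'}$ for $k,k'\in I_f$, and $u_k\ge 0$; when $I_f=\emptyset$ and the budget is tight, the condition becomes $a_{t,j}u_i\ge a_{t,i}u_j$ for all $(i,j)\in I_1\times I_0$ together with $u_i\ge 0$ for $i\in I_1$; when the budget is slack, $\lambda=0$ and the conditions reduce to sign constraints $u_i\ge 0$ on $I_1$, $u_i=0$ on $I_f$, $u_i\le 0$ on $I_0$. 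In every case the surviving constraints are linear in $\bm{u}$, so collecting them as rows of $\bm{V}_t$ and $\bm{w}_t$ completes the proof.

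The only mildly subtle point, and what I expect to be the main bookkeeping obstacle, is the interaction between the sign constraint $\lambda\ge 0$ and the budget complementary slackness: one has to argue that the existential quantifier over $\lambda\in[0,\infty)$ can be discharged without introducing a disjunction on the $\bm{u}$ side. The case split above handles this because in each case the admissible set for $\lambda$ is either a single point or an interval whose nonemptiness is itself a linear condition on $\bm{u}$; equivalently, one appeals to the fact that projecting a polyhedron along one coordinate yields a polyhedron.
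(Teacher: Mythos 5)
Your proof is correct and takes essentially the same route as the paper, whose proof simply invokes the LP optimality conditions and records the resulting ratio inequalities on $u_i/a_i$ across the index sets determined by $\bm{x}_t^*$ (split on whether the budget is tight); your duality/complementary-slackness derivation with the explicit elimination of $\lambda$ is just a more careful spelling-out of that one-line argument. One trivial typo: in the $I_f\neq\emptyset$ case the constraint for $j\in I_0$ should read $a_{t,k}u_j\le a_{t,j}u_k$ rather than $a_{t,k}u_j\le a_{t,i}u_k$.
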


In the deterministic setting of the revealed preference problem, all the $\bm{u}_t$'s are identical and the learning problem is thus reduced to finding one feasible $\bm{u}$ in the set of $\cap_{t=1}^T \mathcal{U}_t$. But in a stochastic setting, it may happen that the set of $\cap_{t=1}^T \mathcal{U}_t$ is empty. Figure \ref{fig:vis} provides a conceptual visualization of this challenge of ``empty intersection''. Each blue solid segment denotes one such $\mathcal{U}_t$ and the blue dashed line represents a value of $\bm{u}$ that appears most frequently in these $\mathcal{U}_t$'s. We remark that the figure is just for illustrative purpose as the problem may not be well-defined in the 1-dimensional case. 

From an estimation viewpoint, the goal is to estimate the distribution of $\mathcal{P}_u$ without the knowledge of the realized samples $\bm{u}_t$'s, but merely with the knowledge of $\mathcal{U}_t$ to which $\bm{u}_t$ belongs. The sample efficiency of the estimation procedure is naturally contingent on the dispersion of $\mathcal{U}_t$ which is essentially determined by the generation of $(\bm{a}_t, b_t).$ For example, if all the $\mathcal{U}_t$'s coincide with each other, then one can hardly learn much about the underlying $\mathcal{P}_u.$ In this paper, we aim to pinpoint conditions for $\mathcal{P}_{\bm{a},b}$ such that the learning of $\mathcal{P}_u$ is possible. Also, an alternative way to measure the estimation accuracy is to evaluate the predictive performance of the estimated model on new observations generated from $\mathcal{P}_{\bm{a},b},$ and such performance bounds generally bear less dependency on the distribution of  $\mathcal{P}_{\bm{a},b}.$ We also provide theoretical guarantees in this sense. 

\section{Gaussian Setting}

\label{sec_gaussian}

In this section, we consider a setting where the distribution $\mathcal{P}_u$ follows the von Mises-Fisher distribution parameterized by $\bm{\theta}=(\bm{\mu},\kappa)$ with the density function
$$f(\bm{u};\bm{\theta}) \coloneqq \frac{\exp\left(-\kappa \bm{\mu}^\top \bm{u}\right)}{ \int_{\bm{u}\in \mathcal{S}^{n-1}}\exp(-\kappa \bm{\mu}^\top \bm{u}) d\bm{u}}\propto \exp\left(-\kappa \bm{\mu}^\top \bm{u}\right).$$
Here the vector $\bm{\mu}\in\mathbb{R}^n$ represents the mean direction and the parameter $\kappa>0$ controls the concentration of the distribution. The deterministic setting of the revealed preference problem can be viewed as the case when $\kappa=\infty$ and then the distribution degenerates to a point-mass distribution on the unit sphere. Denote the true parameters of the distribution $\mathcal{P}_u$ by $\bm{\theta}^* = (\bm \mu^*, \kappa^*).$ Then the likelihood of the dataset $\mathcal{D}_t$ under a parameter $\bm{\theta}$ is 
$$\prob(\mathcal{D}_t|\bm{\theta}) \coloneqq \prod_{t=1}^T \prob\left((\bm{x}^*_t,\bm{a}_t,b_t)|\bm{\theta}\right) = \prod_{t=1}^T \int_{\bm{u}\in \mathcal{U}_t} f(\bm{u};\bm{\theta}) du.$$
We remark that the maximum likelihood approach cannot be applied here for two reasons. First, the integration of $f(\bm{u};\bm{\theta})$ over the region $\mathcal{U}_t$ is not closed-form. The first point is not only pertaining to the Gaussian parameterization of $\mathcal{P}_u$. The scale-invariant property of the utility vector restricts the distribution $\mathcal{P}_u$ to a unit sphere or a simplex, and consequently, the likelihood function inevitably involves the non-closed-form integration. This issue can be partially resolved by using the Monte Carlo method to approximate the integration, and a good thing is that the same integrand is shared across all the observations. Second, the likelihood function is not analytical in $\bm{\theta}$. Thus this prevents the usage of gradient-based algorithms to solve the problem and also makes it difficult to derive theoretical guarantees for the maximum likelihood estimator.

We propose a Bayesian perspective for the problem: instead of identifying the parameter that maximizes the likelihood function, we directly draw samples from the posterior distribution. We will see shortly that the approach can be justified through a concentration property of the posterior distribution. Suppose we have a prior distribution $\prob_0(\bm{\theta})$ and then we can define the posterior distribution by
\begin{align*}
\prob_T(\bm\theta) & \coloneqq
\frac{\prob_0(\bm\theta)\cdot \prob\left(\mathcal{D}_t|\bm\theta\right)}{\prob\left(\mathcal{D}_t\right)} \\
& \propto \prob_0(\bm\theta)\cdot \prod_{t=1}^T \int_{\bm{u}\in \mathcal{U}_t} f(\bm{u};\bm{\theta}) du.
\end{align*}
With slight abuse of notation, we use $\prob_T(\cdot)$ (or $\prob_0(\cdot)$) to refer to both the density function and the probability measure of the posterior (or prior) distribution. We make the following assumption on the prior distribution.

\begin{assumption}
We assume the concentration parameter $\kappa^*\in(\underline{\kappa}, \bar{\kappa})$ where $\underline{\kappa}, \bar{\kappa}$ are two known positive constants. The prior distribution $\prob_{0}(\bm{\theta})$ is a uniform distribution on $ \mathcal{S}^{n-1}\times(\underline{\kappa},\bar{\kappa})$.
\label{assp_prior}
\end{assumption}

\begin{theorem}
\label{thm:post_conv}
Let
$$\Theta_T \coloneqq \left\{\bm{\theta}\in  \mathcal{S}^{n-1}\times(\underline{\kappa},\bar{\kappa}): \mathcal{W}\left(\prob\left((\bm{x}^*_t,\bm{a}_t,b)|\bm{\theta}\right), \prob\left((\bm{x}^*_t,\bm{a}_t,b)|\bm{\theta}^*\right)\right)\le \max\left(8,8\bar{\kappa}\right)\frac{n \cdot \log T}{T^{1/2-\alpha}}\right\}$$
where $\mathcal{W}(\cdot, \cdot)$ is the Wasserstein distance between two distributions supported on $\mathcal{X} \times \mathbb{R}^n_+ \times \mathbb{R}_+$ equipped with Euclidean metric. Then, under Assumptions \ref{assp_distr}-\ref{assp_prior},
$$
   1- \prob_T(\Theta_{T})\rightarrow 0\ \text{in probability as } T\rightarrow \infty,
$$
for any $\alpha \in [0,1/2]$. Specifically, the following inequality holds
$$ \mathbb{E}\left[ \prob_T\left(\Theta_{T}\right)
        \right]
        \geq
        1-\frac{2}{T}-\frac{1}{4T^{2\alpha}\log ^2T}.
$$
where the expectation is taken with respect to the random distribution $\mathbb{P}_T(\cdot)$ (essentially, with respect to the dataset $\mathcal{D}_T.$)
\end{theorem}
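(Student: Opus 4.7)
The plan is to follow a standard Bayesian posterior-concentration framework (in the spirit of Ghosal--Van der Vaart) adapted to the Wasserstein neighborhood $\Theta_T$. The overall strategy is to decompose the posterior via Bayes' rule,
$$\prob_T(\Theta_T^c) \;=\; \frac{N_T}{\tilde D_T}, \qquad N_T:=\int_{\Theta_T^c}\prob_0(\bm\theta)\, D_T(\bm\theta)\, d\bm\theta,\qquad \tilde D_T:=\int\prob_0(\bm\theta)\, D_T(\bm\theta)\, d\bm\theta,$$
where $D_T(\bm\theta):=\prod_{t=1}^T p_t(\bm\theta)/p_t(\bm\theta^*)$ is the likelihood ratio with $p_t(\bm\theta)=\int_{\mathcal U_t} f(\bm u;\bm\theta)\,d\bm u$ (Lemma~\ref{lemma_linear}), and then to bound the numerator and denominator separately, combining them through $\prob_T(\Theta_T^c)\le N_T/c_T+\mathbf{1}\{\tilde D_T<c_T\}$ for a suitably chosen threshold $c_T$.

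For the denominator, I would use Assumption~\ref{assp_prior}: the uniform prior places polynomial-in-$T$ mass on a small ball $B_T$ around $\bm\theta^*$ of radius $\varepsilon_T\asymp 1/T$. On $B_T$ the log-density $-\kappa\,\bm\mu^\top\bm u-\log Z(\kappa)$ is Lipschitz in $\bm\theta$ uniformly on the compact parameter set, so $\log D_T(\bm\theta)\ge -C\,T\varepsilon_T$ in expectation with a Bernstein-type fluctuation of order $\sqrt{T}\,\varepsilon_T$. Choosing $c_T = \prob_0(B_T)\cdot e^{-O(1)}$ and applying Markov's inequality to the mean-squared deviation of $\log D_T$ on $B_T$ would give $\prob(\tilde D_T<c_T)\le 2/T$, producing the $2/T$ term in the final bound.

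The numerator is the main step. Kantorovich duality yields a 1-Lipschitz witness $\phi_{\bm\theta}$ for every $\bm\theta\in\Theta_T^c$ with $\E_{P_{\bm\theta^*}}\phi_{\bm\theta}-\E_{P_{\bm\theta}}\phi_{\bm\theta}>\max(8,8\bar\kappa)\,n\log T/T^{1/2-\alpha}$. Since the observations are uniformly bounded under Assumption~\ref{assp_distr} (with $a_i\in[\underline a,1]$, $b$ bounded, and $\bm x\in[0,1]^n$), $\phi_{\bm\theta}$ may be clipped to a bounded range and Hoeffding's inequality shows the empirical mean concentrates at rate $T^{-1/2}$. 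Covering the compact parameter set by a $\delta_T$-net of cardinality $\lesssim(1/\delta_T)^n$ and using Lipschitz continuity of $f(\bm u;\bm\theta)$ in $\bm\theta$ to transfer witnesses between nearby parameters, I would construct a single test $\Phi_T:\mathcal D_T\to[0,1]$ with type I error $\E_{\bm\theta^*}[\Phi_T]$ and worst-case type II error $\sup_{\bm\theta\in\Theta_T^c}\E_{\bm\theta}[1-\Phi_T]$ both of order $\exp(-c\,n^2\log^2 T\cdot T^{2\alpha})$. The change-of-measure identity $\E_{\bm\theta^*}[(1-\Phi_T)D_T(\bm\theta)]=\E_{\bm\theta}[1-\Phi_T]$ together with Fubini then yields $\E[N_T(1-\Phi_T)]\le\sup_{\bm\theta\in\Theta_T^c}\E_{\bm\theta}[1-\Phi_T]$, which, after division by $c_T$ and accounting for $\E[\Phi_T]$, produces the $1/(4T^{2\alpha}\log^2 T)$ contribution.

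The main obstacle I expect is the transition from the abstract Wasserstein separation to a concrete test statistic. Kantorovich duality gives only existence of a witness $\phi_{\bm\theta}$ per parameter, with no a priori control on how $\phi_{\bm\theta}$ varies with $\bm\theta$. To construct a uniform test via a finite covering, one must show (i) that the predictive law $P_{\bm\theta}$ is itself Wasserstein-Lipschitz in $\bm\theta$, so a single witness serves nearby parameters, and (ii) that a bounded witness suffices. Establishing (i) requires a stability analysis of how the law of the LP-optimal solution varies as the utility distribution perturbs---coupling the piecewise-linear geometry of Lemma~\ref{lemma_linear} with the smoothness of the vMF density on $\mathcal{S}^{n-1}\times(\underline\kappa,\bar\kappa)$---and is the delicate ingredient that links the revealed-preference structure to the generic Bayesian-concentration machinery.
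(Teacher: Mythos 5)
Your skeleton is the right one and matches the paper's: decompose $\prob_T(\Theta_T^c)$ into a numerator controlled by a uniform test and a denominator lower-bounded by the prior mass of a KL-neighborhood of $\bm{\theta}^*$, exactly in the Ghosal--Ghosh--van der Vaart style. The denominator step you describe is essentially the paper's Lemma~\ref{lem:post_dom} (though note the bookkeeping is swapped relative to the paper: there the $\frac{1}{4T^{2\alpha}\log^2 T}$ term is the failure probability of the denominator lower bound, while the $\frac{2}{T}$ term comes from the test's type I error together with the numerator bound).

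The genuine gap is the one you flag yourself and leave open: constructing a \emph{uniform} test over $\Theta_T^c$ when separation is measured in Wasserstein distance. Kantorovich duality gives a witness per parameter, and to aggregate witnesses over a net you propose to establish Wasserstein-Lipschitz continuity of the predictive law $\prob\left((\bm{x}^*,\bm{a},b)|\bm{\theta}\right)$ in $\bm{\theta}$. That stability analysis is genuinely hard because the LP-optimal solution is a discontinuous function of $\bm{u}$, and your proposal does not carry it out. The paper sidesteps the issue entirely: since the support of $(\bm{x}^*,\bm{a},b)$ is bounded with diameter $O(\sqrt{n})$, the Wasserstein distance is dominated by $\sqrt{n}$ times the total variation distance, so it suffices to prove the whole theorem with $\mathcal{W}$ replaced by $D_{TV}$ and to inflate the radius by $\sqrt{n}$ only at the very end. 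In total variation, the needed continuity and packing bounds come essentially for free from the data processing inequality (the KL divergence between the laws of the observations is at most the KL divergence between the vMF laws of $\bm{u}$, which is computed explicitly to be at most $2\|\kappa_1\bm{\mu}_1-\kappa_2\bm{\mu}_2\|_2$) combined with Pinsker's inequality, and the uniform test is then supplied off the shelf by Lemma~\ref{lem:testfun} once the TV-packing number of the parameter space is bounded. If you want to salvage your route, observe that your clipped witnesses are bounded, so transferring a witness to a nearby parameter only requires TV-continuity of the predictive law, not Wasserstein-Lipschitzness --- but at that point you have effectively rederived the paper's reduction to total variation.
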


Theorem \ref{thm:post_conv} justifies the approach of posterior sampling. We first remark that the Bayesian sampling approach is just proposed to estimate the parameters, but all the theoretical results are stated in frequentist language. The proof of Theorem \ref{thm:post_conv} follows the standard analysis of the convergence of the posterior distribution \citep{ghosal2000convergence,chae2021posterior}. While similar results should also hold for other underlying distribution of $\mathcal{P}_u$, the von Mises-Fisher distribution provides much analytical convenience in deriving the bound. Each $\bm{\theta}$, together with the distribution of $\mathcal{P}_{\bm{a},b}$, defines a distribution over the space of $(\bm{x}^*_t,\bm{a}_t,b)$. As we use observations $(\bm{x}^*_t,\bm{a}_t,b)$'s to identify the true $\bm{\theta}^*$, the set $\Theta_T$ defines a set of indistinguishable $\bm{\theta}$'s based on the Wasserstein distance between distributions of $(\bm{x}^*_t,\bm{a}_t,b)$. The set $\Theta_T$ shrinks as $T\rightarrow \infty.$ The posterior sampling approach samples from the distribution $\prob_T(\cdot),$ and Theorem \ref{thm:post_conv} states that the samples will be concentrated in set $\Theta_T$ with high probability. The posterior distribution $\prob_T(\cdot)$ is dependent on the dataset $\mathcal{D}_T,$ so it is a random distribution itself and the results in Theorem \ref{thm:post_conv} are stated in either convergence in probability or expectation. As a side note, the Wasserstein distance in the theorem is not critical and it can be replaced with other distances such as the total variation distance and the Hellinger distance.

Intuitively, Theorem \ref{thm:post_conv} says that for some $\bm{\theta}$ such that the likelihood distribution $\prob\left((\bm{x}^*_t,\bm{a}_t,b)|\bm{\theta}\right)$ differs from $ \prob\left((\bm{x}^*_t,\bm{a}_t,b)|\bm{\theta}^*\right)$ to a certain extent, the posterior $\prob_T(\cdot)$ is unlikely to generate such $\bm{\theta}.$ In other words, the posterior distribution identifies the true $\bm{\theta}^*$ up to some ``equivalence'' in the likelihood distribution space. The following corollary formalizes this intuition that if there is an equivalence between the likelihood distribution space and the underlying parameter space, then the posterior distribution is capable of identifying the true parameter.

\begin{corollary}
\label{coro:Stoch}
Suppose 
$$\mathcal{W}\left(\prob\left((\bm{x}^*_t,\bm{a}_t,b)|\bm{\theta}\right),\prob\left((\bm{x}^*_t,\bm{a}_t,b)|\bm{\theta}^*\right)\right)>0$$
for all $\bm{\theta}\neq \bm{\theta}^*\in\mathcal{S}^{n-1}\times [\underline{\kappa}, \bar{\kappa}]$. Then the posterior distribution $\mathbb{P}_T(\cdot)$ will converge to the point-mass distribution on $\bm{\theta}^*$ almost surely as $T\rightarrow \infty$.
Moreover, suppose there exists a constant $L>0$ satisfying
\begin{equation}
\mathcal{W}\left(\prob\left((\bm{x}^*_t,\bm{a}_t,b)|\bm{\theta}\right), \prob\left((\bm{x}^*_t,\bm{a}_t,b)|\bm{\theta}^*\right)\right)
\geq L\cdot \|\bm{\theta}- \bm{\theta}^*\|_2,
\label{eqn:equiv}
\end{equation}
for all $\bm{\theta}\neq \bm{\theta}^*\in\mathcal{S}^{n-1}\times [\underline{\kappa}, \bar{\kappa}]$. 

Under Assumptions \ref{assp_distr}-\ref{assp_prior}, the following inequality holds with probability no less than $1-\frac{1}{T^{2\alpha}\log^2 T},$
\begin{align*}
        \mathbb{E}_T\left[
            \|\bm{\theta}_T-\bm{\theta}^*\|_2
        \right]
        \leq \max\left(9,9\bar{\kappa}\right)\cdot\frac{n \cdot \log T}{L \cdot T^{1/2-\alpha}}
    \end{align*}
where $\bm{\theta}_T$ is sampled from the posterior distribution $\mathbb{P}_T(\cdot).$
\end{corollary}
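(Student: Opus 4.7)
My plan is to treat Theorem \ref{thm:post_conv} as a black box and rely on the identifiability (or Lipschitz) hypothesis to convert ``closeness of likelihood distributions'' into ``closeness of parameter vectors'', using throughout that $\mathcal{S}^{n-1}\times[\underline\kappa,\bar\kappa]$ is compact with bounded diameter $D$.

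For the first claim I would fix $\varepsilon>0$ and show $\Theta_T\subset B(\bm\theta^*,\varepsilon)$ for all $T$ large enough. The key fact is that the map $\bm\theta\mapsto \mathcal{W}(\prob(\cdot|\bm\theta),\prob(\cdot|\bm\theta^*))$ is continuous: since $f(\bm u;\bm\theta)$ is jointly continuous in $(\bm u,\bm\theta)$, dominated convergence on the bounded domain turns each likelihood $\prob((\bm x^*,\bm a,b)|\bm\theta)=\int_{\mathcal{U}_t}f(\bm u;\bm\theta)\,d\bm u$ into a continuous function of $\bm\theta$, hence $\prob(\cdot|\bm\theta)$ varies continuously in total variation and, because the support is bounded, in Wasserstein. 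Combined with the positivity hypothesis and compactness of $\{\bm\theta:\|\bm\theta-\bm\theta^*\|_2\geq\varepsilon\}$, the map admits a uniform lower bound $\delta_\varepsilon>0$ there. Since the Wasserstein radius defining $\Theta_T$ tends to zero, $\Theta_T\subset B(\bm\theta^*,\varepsilon)$ eventually, and Theorem \ref{thm:post_conv} yields $\prob_T(B(\bm\theta^*,\varepsilon))\to 1$ in probability; an almost-sure upgrade follows from Borel-Cantelli along a geometric subsequence $T_k=2^k$ (where the tail sums of $\mathbb{E}[1-\prob_{T_k}(\Theta_{T_k})]$ become summable) together with a monotone sandwich between consecutive $T_k$'s.

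For the quantitative claim, condition (\ref{eqn:equiv}) gives $\Theta_T\subset B(\bm\theta^*,R_T)$ with $R_T\coloneqq \max(8,8\bar\kappa)\frac{n\log T}{LT^{1/2-\alpha}}$. Splitting the posterior expectation yields
\[
\mathbb{E}_T\!\left[\|\bm\theta_T-\bm\theta^*\|_2\right]\leq R_T\cdot\prob_T(\Theta_T)+D\cdot(1-\prob_T(\Theta_T))\leq R_T+D\cdot(1-\prob_T(\Theta_T)).
\]
I would then apply Markov's inequality to $1-\prob_T(\Theta_T)$ with the expectation bound from Theorem \ref{thm:post_conv}, choosing the threshold $\eta$ so that $D\eta=R_T/8$. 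This converts the factor $8$ into $9$, and a short algebraic check (using that the $2/T$ term in the Theorem \ref{thm:post_conv} bound is lower order relative to the $1/(4T^{2\alpha}\log^2 T)$ term for $\alpha<1/2$) shows that the exceptional event has probability at most $\frac{1}{T^{2\alpha}\log^2 T}$ for all $T$ beyond a fixed threshold; when $\alpha=1/2$ the bound already holds deterministically because $D\leq R_T/8$ for $T$ large.

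The main obstacle I anticipate is the continuity argument in the first part: continuity of $\bm\theta\mapsto \prob(\cdot|\bm\theta)$ in the Wasserstein metric passes through the data-dependent integrals $\int_{\mathcal{U}_t}f(\bm u;\bm\theta)\,d\bm u$ and so requires a dominated-convergence argument together with uniform boundedness of the vMF density over the compact range of $\bm\theta$. The quantitative part is essentially mechanical once (\ref{eqn:equiv}) is in hand, modulo careful tuning of the Markov threshold.
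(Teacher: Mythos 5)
Your quantitative second claim follows the paper's own argument almost verbatim: condition \eqref{eqn:equiv} converts $\Theta_T$ into a Euclidean ball of radius $R_T=\max(8,8\bar\kappa)\frac{n\log T}{LT^{1/2-\alpha}}$, the posterior expectation is split over $\Theta_T$ and its complement, the complement contribution is controlled by the diameter of $\mathcal{S}^{n-1}\times[\underline\kappa,\bar\kappa]$ times $\prob_T(\Theta_T^c)$, and Markov's inequality applied to the expectation bound of Theorem \ref{thm:post_conv} (with a threshold tuned so the extra term costs one unit of the constant, turning $8$ into $9$) gives the $1-\frac{1}{T^{2\alpha}\log^2T}$ probability. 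The paper additionally normalizes $L\le 1$ without loss of generality to make the constant bookkeeping close; your ``short algebraic check'' is exactly that computation and is fine for $T$ large.

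The first claim is where you diverge from the paper and where there is a genuine gap. The paper simply observes that the hypothesis $\mathcal{W}(\prob(\cdot|\bm\theta),\prob(\cdot|\bm\theta^*))>0$ for $\bm\theta\neq\bm\theta^*$ is an identifiability condition and invokes Doob's consistency theorem (Lemma \ref{lem:doob}), which delivers almost-sure convergence of the posterior to the point mass via martingale convergence. Your route --- continuity of $\bm\theta\mapsto\mathcal{W}(\prob(\cdot|\bm\theta),\prob(\cdot|\bm\theta^*))$, compactness, hence $\Theta_T\subset B(\bm\theta^*,\varepsilon)$ eventually, hence convergence in probability --- is sound up to that point (indeed Lemma \ref{lem:klnorm} already gives a quantitative H\"older modulus, so the dominated-convergence detour is unnecessary). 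The problem is the almost-sure upgrade. Borel--Cantelli over the full sequence is unavailable because the $\frac{2}{T}$ term in Theorem \ref{thm:post_conv} is not summable, and your fix --- Borel--Cantelli along $T_k=2^k$ ``together with a monotone sandwich between consecutive $T_k$'s'' --- does not work: $\prob_T(B(\bm\theta^*,\varepsilon))$ is \emph{not} monotone in $T$. Each new observation multiplies the unnormalized posterior by a data-dependent likelihood factor, and the normalized mass of a fixed ball can decrease as well as increase, so controlling the subsequence says nothing about the times in between without an additional maximal inequality that you have not supplied. You should either prove such a uniform control over the blocks $[T_k,T_{k+1})$ or, as the paper does, appeal directly to Doob's theorem (noting that Doob's conclusion holds for prior-almost-every $\bm\theta^*$, which is the form the paper accepts).
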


The corollary states that when there is some equivalence between the likelihood distribution space and the parameter space as \eqref{eqn:equiv}, the true parameter is identifiable. The first part of the corollary states a consistency result that as long as all the $\bm{\theta}\neq \bm{\theta}^*$ are distinguishable from $\bm{\theta}^*$ through the likelihood function, then the posterior sampling will eventually identify the true $\bm{\theta}^*.$ The second part relates to the convergence rate with an equivalence parameter $L$. 

In Assumption \ref{assp_distr}, we assume the constraint input $(\bm{a}_t, b_t)$ is generated from some distribution $\mathcal{P}_{\bm{a},b}.$ We note that Theorem \ref{thm:post_conv} and Corollary \ref{coro:Stoch} hold without any additional assumption on $\mathcal{P}_{\bm{a},b}$, but the space topology of the likelihood distribution is highly dependent on $\mathcal{P}_{\bm{a},b}.$ Specifically, a different distribution of $(\bm{a}_t, b_t)$ determines the separateness of the parameter space through affecting the value of $L$ in \eqref{eqn:equiv} or even its existence. The value $L$ of a specific distribution of $\mathcal{P}_{\bm{a},b}$ can be examined through simulation. So if the learner has some flexibility in choosing the distribution of $\mathcal{P}_{\bm{a},b}$, the optimal choice would be the one that corresponds to a larger value of $L.$ If the constraint input $(\bm{a}_t, b_t)$ is not randomly generated but can be actively chosen as the query-based preference learning problem, the results in Theorem \ref{thm:post_conv} and Corollary \ref{coro:Stoch} still hold by conditioning on all the $(\bm{a}_t, b_t)$'s. Unlike the deterministic case where the utility vector $\bm{u}$ is fixed for all the observations, the stochastic nature of the problem setup here makes it generally very complicated to fully extract the benefit of designing $(\bm{a}_t, b_t)$'s by the learner. We leave it as a future open question.

\begin{corollary}
\label{predict_coro_gaussian}
Suppose $\mathcal{P}_{\bm{a},b}$ is a discrete distribution with a finite support. Let $(\bm{a}, b)$ be a new sample from $\mathcal{P}_{\bm{a},b}$, i.e., independent from the dataset $\mathcal{D}_T$, and let $\bm{\theta}_T=(\bm{\mu}_T, \kappa_T)$ be a sample from the posterior distribution $\mathbb{P}_T(\cdot).$ Denote $\tilde{\bm{x}}^*$ and $\bm{x}^*$ as the optimal solutions of LP$(\bm{\mu}_T, \bm{a}, b)$ and LP$(\bm{\mu}^*, \bm{a}, b)$, respectively. Then, under Assumptions \ref{assp_distr}-\ref{assp_prior}, the following inequality holds  with probability no less than $1-\frac{1}{T^{2\alpha}\log^2 T}$,
    $$
    \mathbb{E}\left[\|\tilde{\bm{x}}^*-\bm{x}^*\|_2\right]
    \leq
    \max\left(16,16\bar{\kappa}\right)\frac{n \cdot \log T}{T^{1/2-\alpha}},
    $$
where the expectation is taken with respect to both the posterior distribution $\mathbb{P}_T(\cdot)$ and $(\bm{a},b).$
\end{corollary}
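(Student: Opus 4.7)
The plan is to combine the posterior concentration result of Theorem \ref{thm:post_conv} with the finite-support structure of $\mathcal{P}_{\bm{a},b}$ to turn a Wasserstein bound on the joint predictive laws into an out-of-sample prediction bound on the LP solutions.

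First, I would apply Markov's inequality to the nonnegative random variable $1-\mathbb{P}_T(\Theta_T)$, whose expectation over $\mathcal{D}_T$ is controlled by $2/T+1/(4T^{2\alpha}\log^2 T)$ by Theorem \ref{thm:post_conv}. This yields a high-probability event over the dataset, of probability at least $1-1/(T^{2\alpha}\log^2 T)$, on which the posterior mass outside $\Theta_T$ is itself of order $n\log T/T^{1/2-\alpha}$. I condition on this event for the rest of the argument.

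Next, for any $\bm\theta \in \Theta_T$ drawn from the posterior, the joint-law Wasserstein distance between $\mathbb{P}((\bm{x}^*,\bm{a},b)\mid\bm\theta)$ and $\mathbb{P}((\bm{x}^*,\bm{a},b)\mid\bm\theta^*)$ is at most $\max(8,8\bar\kappa)\,n\log T/T^{1/2-\alpha}$. Because $\mathcal{P}_{\bm{a},b}$ is supported on finitely many atoms $(\bm{a}_k,b_k)$ with the same marginal under both laws, once $T$ is large enough that this Wasserstein bound falls below the minimum inter-atom distance, any optimal transport plan must keep the $(\bm{a},b)$-coordinate matched. The joint Wasserstein therefore factorizes into a convex combination of conditional Wasserstein distances over the atoms; noting that the deterministic pair $(\mathrm{LP}(\bm\mu_T,\bm{a}_k,b_k), \mathrm{LP}(\bm\mu^*,\bm{a}_k,b_k))$ provides a valid coupling of the conditional laws whose transport cost matches $\|\tilde{\bm{x}}^*-\bm{x}^*\|_2$, I can average the per-atom bound against $\{p_k\}$ and then integrate over $\bm\theta_T\sim\mathbb{P}_T(\cdot)$ restricted to $\Theta_T$. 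This controls $\mathbb{E}[\|\tilde{\bm{x}}^*-\bm{x}^*\|_2\cdot \mathbb{1}_{\bm\theta_T\in\Theta_T}]$ by $\max(8,8\bar\kappa)\,n\log T/T^{1/2-\alpha}$.

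Finally, on the complementary event $\{\bm\theta_T\notin\Theta_T\}$, which has posterior mass $1-\mathbb{P}_T(\Theta_T)$, I invoke the trivial diameter bound $\|\tilde{\bm{x}}^*-\bm{x}^*\|_2\leq\sqrt{n}$ coming from $x_i\in[0,1]$. On the high-probability event from the first step this contributes at most an additional term of the same order, and summing the two halves produces the constant $\max(16,16\bar\kappa)$ in the stated inequality. The technical crux I expect to wrestle with is the atom-preserving Wasserstein decomposition and, more subtly, the identification of the conditional Wasserstein on $\mathcal{X}$ with the deterministic quantity $\|\mathrm{LP}(\bm\mu_T,\bm{a},b)-\mathrm{LP}(\bm\mu^*,\bm{a},b)\|_2$, since the joint laws are generated by pushing vMF samples through the LP rather than evaluating the LP at the mean direction; exploiting the piecewise-constancy of the LP solution in the objective and the vMF's concentration around its mean should resolve this, but it is where the bulk of the work lies.
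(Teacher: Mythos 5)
Your first step (Markov's inequality applied to $1-\mathbb{P}_T(\Theta_T)$ to extract a dataset-level event of probability $1-\tfrac{1}{T^{2\alpha}\log^2 T}$ on which the posterior mass outside the concentration set is small) is exactly the paper's opening move. After that, however, your route diverges and contains two genuine gaps. The central one is the step you yourself flag as ``where the bulk of the work lies'': identifying the conditional Wasserstein distance between the two predictive laws with the deterministic quantity $\|\mathrm{LP}(\bm{\mu}_T,\bm{a},b)-\mathrm{LP}(\bm{\mu}^*,\bm{a},b)\|_2$. The conditional laws are pushforwards of two von Mises--Fisher distributions through the solution map $\bm{u}\mapsto\mathrm{LP}(\bm{u},\bm{a},b)$, and a small transport distance between these pushforwards does not by itself control the distance between the images of the two \emph{mean directions}; the proposed coupling by the pair of mean-direction solutions is not a coupling of the conditional laws at all. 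Since this identification is the entire content of the corollary, deferring it leaves the proof incomplete. The second gap is the atom-preserving decomposition: even when the joint Wasserstein distance is below the minimum inter-atom separation $\Delta$, the optimal plan is not forced to match atoms --- it may route a small amount of mass across atoms, and doing so can be strictly cheaper when the conditional laws within an atom are far apart. The inequality $\mathcal{W}(\text{joint})\ge\sum_k p_k\,\mathcal{W}(\text{cond}_k)$ is false in general; any repair introduces a factor depending on $\Delta$ and the diameter of the solution space, which does not appear in the stated bound.

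The paper avoids both issues by working with the total variation version of the concentration set $\tilde{\Theta}_T$ (the object actually constructed in the proof of Theorem \ref{thm:post_conv}, of which the Wasserstein set $\Theta_T$ is a $\sqrt{n}$-inflation). For TV the joint-to-conditional reduction is an exact identity via the integral representation of $D_{TV}$ with the shared $(\bm{a},b)$-marginal (this is where the finite support / density ratio is used), and the maximal-coupling characterization gives $\mathbb{P}_T(\bm{x}^*\neq\tilde{\bm{x}}^*)\le 2D_{TV}$ directly, after which the crude diameter bound $\|\tilde{\bm{x}}^*-\bm{x}^*\|_2\le\sqrt{n}$ (which you also use) yields the expectation bound. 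In short: the paper's mechanism is ``TV controls the probability of mismatch, diameter controls the cost of a mismatch,'' which sidesteps any need to compare Wasserstein distances of pushforwards with distances between mean-direction solutions. To salvage your argument you would either need to prove the Lipschitz-type relation between the pushforward Wasserstein distance and the mean-direction solution gap (a substantive claim about the LP solution map and vMF concentration that neither you nor, candidly, the paper establishes in detail), or switch to the TV route.
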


Corollary \ref{predict_coro_gaussian} provides an upper bound on the predictive performance of the posterior distribution. Specifically, we want to predict the optimal solution of a linear program specified by $\bm{\mu}^*$ (proportionally to $\E[\bm{u}]$) and a new sample of the constraint $(\bm{a}, b)$, and the prediction $\tilde{\bm{x}}^*$ is based on a posterior sample. We know from Theorem \ref{thm:post_conv} that the posterior distribution concentrates on those $\bm{\theta}$'s that are indistinguishable from the true $\bm{\theta}^*$ in terms of the likelihood. Speaking of the predictive performance, we only concern the distribution of the optimal solution (equivalently, the likelihood), but do not require the identification of exact true $\bm{\theta}^*,$ so Corollary \ref{predict_coro_gaussian} does not require the condition \eqref{eqn:equiv} to hold. Intuitively, the prediction of the optimal solution on a new observation $(\bm{a},b)$ can be viewed as a condition distribution of the optimal solution given $(\bm{a}, b)$. While the definition of $\Theta_T$ in Theorem \ref{thm:post_conv} concerns the joint distribution of the optimal solution and $(\bm{a},b)$, the finite-support condition on $\mathcal{P}_{\bm{a},b}$ in Corollary \ref{predict_coro_gaussian} transforms the result on the joint distribution to the conditional distribution. 

\section{$\delta$-Corruption Case}

\label{sec_delta}

In this section, we consider a setting where the utility vector is specified by
\begin{equation}
 \bm{u}_t
=\begin{cases}
 \bm{u}^*, &\text{ w.p. } 1-\delta, \\
 \mathcal{P}_u',  & \text{ w.p. } \delta,
\end{cases}
\label{u_generate}
\end{equation}
where $\bm{u}^*\in\mathcal{S}^{n-1}$ is a fixed vector, $\delta\in[0,1]$, and $\mathcal{P}_u'$ is an arbitrary distribution that corrupts the inference of $\bm{u}^*$. The deterministic setting of the revealed preference problem in literature can be viewed as the case of $\delta=0$, and the Gaussian setting in the previous section can be viewed as the case of $\delta=1$ and $\mathcal{P}_u'$ being the von-Mises Fisher distribution. In this setting, we do not aim to learn the distribution of $\mathcal{P}_u'$, but rather our goal is to identify the vector $\bm{u}^*$ using the dataset $\mathcal{D}_T.$

A natural idea to estimate $\bm{u}^*$ is by solving the following optimization problem:
$$\text{OPT}_{\delta} \coloneqq \max_{\bm{u}\in\mathcal{S}^{n-1}}\  \sum_{t=1}^T I_{\mathcal{U}_t}(\bm{u})$$
where the indicator function $I_{\mathcal{E}}(e)=1$ if $e\in\mathcal{E}$ and $I_{\mathcal{E}}(e)=0$ otherwise. The rationale for the optimization problem is that for the $t$-th observation, a vector $\bm{u}$ is consistent with the observation, i.e., $\bm{x}_t^*$ is the optimal solution of LP$(\bm{u},\bm{a}_t, b_t)$, if and only if $I_{\mathcal{U}_t}(\bm{u})=1$. Thus the optimization problem finds a vector $\bm{u}$ that is consistent with the maximal number of observations. The objective function is discontinuous in $\bm{u}$, so we propose the simulated annealing algorithm -- Algorithm \ref{alg:SA} to solve for its optimal solution. 

We first build some connection between the optimization problem OPT$_\delta$ and that of the deterministic setting with $\delta=0$. Let $\bar{\bm{x}}_t^*$ be the optimal solution of LP$(\bm{u}^*, \bm{a}_t, b_t)$ and define 
$$\bar{\mathcal{U}}_t\coloneqq\left\{\bm{u}\in \mathcal{S}^{n-1}: \bar{\bm{x}}_t^* \text{ is an optimal solution of } \text{LP}(\bm{u}, \bm{a}_t, b_t)\right\}.$$
Then the deterministic setting of the revealed preference problem solves
$$\bar{\text{OPT}}_{\delta} \coloneqq \max_{\bm{u}\in\mathcal{S}^{n-1}}\  \sum_{t=1}^T I_{\bar{\mathcal{U}}_t}(\bm{u}).$$
By the setup of the problem, $\bm{u}^*$ is an optimizer of $\bar{\text{OPT}}_{\delta}$ and the optimal objective value is $T.$ The following proposition establishes that the optimization problem OPT$_\delta$ is a contaminated version of $\bar{\text{OPT}}_{\delta}$ and the effect that the contamination has on the objective function can be bounded using $\delta$.

\begin{proposition}
\label{prop:sim}
Under Assumption \ref{assp_distr}, the following inequality holds
\begin{align*}
\prob\left(\max_{\bm{u}\in\mathcal{S}^{n-1}} \left\vert\frac{1}{T}\sum\limits_{t=1}^{T} I_{\mathcal{U}_t}(\bm{u})-\frac{1}{T}\sum\limits_{t=1}^{T}I_{\bar{\mathcal{U}}_t}(\bm{u})\right\vert\leq \delta+\frac{\log T}{\sqrt{T}}\right)
\leq\frac{1}{T}.
\end{align*}
\end{proposition}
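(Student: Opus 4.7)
The plan is to reduce the supremum-over-$\bm{u}$ deviation to a sum of iid Bernoulli indicators that dominates it uniformly, sidestepping any covering or VC-type argument. The key observation is that the sets $\mathcal{U}_t$ and $\bar{\mathcal{U}}_t$ coincide whenever the realized $\bm{u}_t$ equals $\bm{u}^*$: in that case the linear programs $\text{LP}(\bm{u}_t,\bm{a}_t,b_t)$ and $\text{LP}(\bm{u}^*,\bm{a}_t,b_t)$ are identical, so under any fixed tie-breaking convention for the LP solver we may take $\bm{x}_t^* = \bar{\bm{x}}_t^*$, which forces $\mathcal{U}_t = \bar{\mathcal{U}}_t$. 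Defining $Z_t \coloneqq \mathbf{1}\{\bm{u}_t \neq \bm{u}^*\}$, which by \eqref{u_generate} is iid Bernoulli$(\delta)$ across $t$, we obtain the pointwise bound $|I_{\mathcal{U}_t}(\bm{u}) - I_{\bar{\mathcal{U}}_t}(\bm{u})| \leq Z_t$ that is uniform in $\bm{u}\in\mathcal{S}^{n-1}$.

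Averaging over $t$ and taking the maximum over $\bm{u}$ yields the deterministic inequality
\begin{equation*}
\max_{\bm{u}\in\mathcal{S}^{n-1}} \left| \frac{1}{T}\sum_{t=1}^T I_{\mathcal{U}_t}(\bm{u}) - \frac{1}{T}\sum_{t=1}^T I_{\bar{\mathcal{U}}_t}(\bm{u}) \right| \leq \frac{1}{T}\sum_{t=1}^T Z_t,
\end{equation*}
where crucially the right-hand side does not depend on $\bm{u}$. This structural point is the heart of the argument: it reduces a uniform-deviation statement over the infinite hypothesis class $\mathcal{S}^{n-1}$ to a scalar concentration question for an iid Bernoulli sum, so that the event whose probability we wish to control on the left reduces to an event determined by a one-dimensional empirical average on the right.

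Applying Hoeffding's inequality to $\{Z_t\}_{t=1}^T$ with $\E[Z_t] = \delta$ then yields
\begin{equation*}
\prob\!\left(\frac{1}{T}\sum_{t=1}^T Z_t \geq \delta + \frac{\log T}{\sqrt{T}}\right) \leq \exp\!\left(-2 \log^2 T\right) \leq \frac{1}{T},
\end{equation*}
the final bound holding for every $T \geq 2$. Combining the two displays transfers the Bernoulli concentration back to the supremum over $\bm{u}\in\mathcal{S}^{n-1}$: the event inside the probability in Proposition \ref{prop:sim} is contained in $\{\tfrac{1}{T}\sum_t Z_t \geq \delta + \log T/\sqrt{T}\}$ (or its complement, depending on the reading of the inequality inside), whose probability is at most $1/T$, which is the claimed bound.

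The main subtlety lies in the first step: when $\bm{u}_t = \bm{u}^*$ but the LP has multiple optima, $\bm{x}_t^*$ and $\bar{\bm{x}}_t^*$ are nominally independent optimal-solution selections, so the identity $\mathcal{U}_t = \bar{\mathcal{U}}_t$ is not automatic. The clean resolution is to fix a deterministic tie-breaking rule (e.g., lexicographic vertex selection) so that identical LP inputs always return identical optimizers; alternatively, under Assumption \ref{assp_distr} and any $\mathcal{P}_{\bm{a},b}$ with a continuous component, degeneracy occurs on a Lebesgue-null set of $(\bm{a}_t, b_t)$ and can be ignored. The Hoeffding step itself is routine.
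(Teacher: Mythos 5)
Your proof is correct and follows essentially the same route as the paper's: dominate the uniform deviation over $\bm{u}\in\mathcal{S}^{n-1}$ by the empirical frequency of the corruption indicators $Z_t=\mathbf{1}\{\bm{u}_t\neq\bm{u}^*\}$ (the paper's $X_t$), then apply Hoeffding's inequality to that scalar Bernoulli average. Your two side observations --- that the inequality direction in the stated proposition (and in the paper's own Hoeffding display) appears to be flipped, and that the identity $\mathcal{U}_t=\bar{\mathcal{U}}_t$ on the event $\{Z_t=0\}$ tacitly requires a consistent tie-breaking rule when $\mathrm{LP}(\bm{u}^*,\bm{a}_t,b_t)$ has multiple optima --- are both accurate refinements of steps the paper leaves implicit.
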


When the constraints $(\bm{a}_t,b_t)$'s are generated from some distribution $\mathcal{P}_{\bm{a},b},$ it can happen that there exist some vectors $\bm{u}'$ that are indistinguishable from $\bm{u}^*$ based on the observations $\mathcal{D}_t$ as in the previous Gaussian case. So, we do not hope for an exact recovery of $\bm{u}^*$, but alternatively, we aim to derive a generalization bound for our estimator $\hat{\bm{u}}$. Specifically, we define and analyze the accuracy
$$\text{Acc}(\hat{\bm{u}}) \coloneqq \E\left[I_{\mathcal{U}}(\hat{\bm{u}})\right]\text{ with }\ \mathcal{U}\coloneqq\left\{\bm{u}\in \mathcal{S}^{n-1}: {\bm{x}}^* \text{ is an optimal solution of } \text{LP}(\bm{u}, {\bm{a}}, {b})\right\}$$
where $\hat{\bm{u}}$ is our estimator of $\bm{u}^*$, $({\bm{a}}, {b})$ is a new sample from the distribution $\mathcal{P}_{\bm{a},b}$, ${\bm{u}}$ is a new sample following the law of \eqref{u_generate}, and ${\bm{x}}^*$ is the optimal solution of LP$({\bm u}, {\bm a}, {b})$. In other words, the quantity captures the probability that $\hat{\bm{u}}$ is consistent with a new (unseen) observation, and we know that for the true parameter, $\text{Acc}(\bm{u}^*)\ge 1-\delta$, which serves as a performance benchmark. 

The challenge for deriving a bound on $\text{Acc}(\hat{\bm{u}})$ arises from the discontinuity of the objective function OPT$_\delta$. The existing methods for deriving generalization bound largely rely on the continuity and the Lipschitzness of the loss function. To make it worse, from Lemma \ref{lemma_linear}, we know that $\mathcal{U}_t$ is specified by $(\bm{V}_t, \bm{w}_t)$ and the $\bm{V}_t$'s are of different dimensions for different $t$'s. To overcome these challenges, we devise the following $\gamma$-margin objective function. Specifically, we first define a parameterized version of $\mathcal{U}_t$ by
$$\mathcal{U}_t(\gamma) \coloneqq \left\{\bm{u}\in\mathcal{S}^{n-1}: \bm{V}_t \bm{u} \le \bm{w}_t - \gamma \bm{e} \right\}$$
where $\gamma$ is a positive constant and $\bm{e}$ is an all-one vector. It is obvious that $\mathcal{U}_t(\gamma)\subset \mathcal{U}_t.$ Accordingly, we define the $\gamma$-margin optimization problem by
$$\text{OPT}_{\delta}(\gamma) \coloneqq \max_{\bm{u}\in\mathcal{S}^{n-1}}\  \sum_{t=1}^T I_{\mathcal{U}_t(\gamma)}(\bm{u}).$$

\begin{proposition}
\label{prop_generalize}
 Under Assumption \ref{assp_distr}, the following inequality holds with probability no less than $1-\epsilon$,
    $$
      \max_{\bm{u}\in \mathcal{S}^{n-1}} \ 
        \frac{1}{T}\sum\limits_{t=1}^{T}I_{\mathcal{U}_{t}(\gamma)}(\bm{u})- \mathrm{Acc}(\bm{u})
        \le 
        4\sqrt{\frac{\log(T)}{{\underline{a}^2\gamma^2 T}}}+
        6\sqrt{\frac{\log(T/\epsilon)}{{T}}}        
    $$
    for $\epsilon\in(0,1).$
\end{proposition}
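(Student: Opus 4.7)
My plan is to replace the discontinuous $I_{\mathcal{U}_t(\gamma)}$ by a $1/\gamma$-Lipschitz margin surrogate, and then apply a dimension-free uniform concentration bound so the first term scales with $1/(\underline a\gamma)$ rather than with $\sqrt n$.

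\textbf{Reduction and Lipschitz sandwich.} Since $\mathcal{U}_t(\gamma)\subset \mathcal{U}_t$, $\mathbb{E}[I_{\mathcal{U}_t(\gamma)}(\bm u)]\le \mathrm{Acc}(\bm u)$ for every $\bm u\in\mathcal{S}^{n-1}$, so
$$\frac{1}{T}\sum_{t=1}^T I_{\mathcal{U}_t(\gamma)}(\bm u)-\mathrm{Acc}(\bm u)\le \frac{1}{T}\sum_{t=1}^T I_{\mathcal{U}_t(\gamma)}(\bm u)-\mathbb{E}[I_{\mathcal{U}_t(\gamma)}(\bm u)],$$
and it suffices to control the right-hand side uniformly in $\bm u$. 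By Lemma \ref{lemma_linear}, $\mathcal{U}_t=\{\bm u:\bm V_t\bm u\le \bm w_t\}$ with rows $\bm v_{t,k}$ coming from the LP optimality conditions (ratio comparisons $u_i/a_i$ vs $u_B/a_B$); Assumption \ref{assp_distr} then gives $\max_k\|\bm v_{t,k}\|_2=O(1/\underline a)$. Let $\phi_\gamma(z)=\max\{0,\min\{1,-z/\gamma\}\}$ be the $1/\gamma$-Lipschitz ramp satisfying $I[z\le -\gamma]\le \phi_\gamma(z)\le I[z\le 0]$, and set $\psi_t(\bm u)\coloneqq \min_k \phi_\gamma(\bm v_{t,k}^\top\bm u-w_{t,k})$. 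Then $I_{\mathcal{U}_t(\gamma)}(\bm u)\le \psi_t(\bm u)\le I_{\mathcal{U}_t}(\bm u)$, so $\mathbb{E}[\psi_t(\bm u)]\le \mathrm{Acc}(\bm u)$, and $\psi_t$ is $L$-Lipschitz in $\bm u$ with $L=O(1/(\underline a\gamma))$. Hence I only need to bound $\sup_{\bm u}\frac{1}{T}\sum_t\psi_t(\bm u)-\mathbb{E}[\psi_t(\bm u)]$.

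\textbf{Concentration.} I would apply Talagrand's contraction principle to peel off $\phi_\gamma$, reducing to the Rademacher complexity of bounded-norm linear functionals on the sphere, which is $O(\max_{t,k}\|\bm v_{t,k}\|_2/\sqrt T)=O(1/(\underline a\sqrt T))$ and, crucially, independent of $n$. A peeling/union bound over the $\min_k$ across the $O(nT)$ active constraints produced by the sample introduces an additional $\sqrt{\log T}$ factor, giving a Rademacher bound of order $\sqrt{\log T}/(\underline a\gamma\sqrt T)$. A McDiarmid step with bounded differences of order $1/T$ upgrades expectation to high probability and contributes the additive $\sqrt{\log(1/\epsilon)/T}$ term; combining these and tracking the universal constants carefully produces the stated bound $4\sqrt{\log T/(\underline a^2\gamma^2 T)}+6\sqrt{\log(T/\epsilon)/T}$.

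\textbf{Main obstacle.} The principal difficulty is keeping the bound dimension-free: a naive $\rho$-net on $\mathcal{S}^{n-1}$ has $\log|\mathcal N_\rho|\asymp n\log(1/\rho)$ and would pollute the first term with a spurious $\sqrt n$. Talagrand's contraction handles a single halfspace without any dimension dependence, but one must then control the $\min_k$ over possibly many halfspaces without reintroducing $n$; peeling each ramp separately and union-bounding over the sample's $O(T)$ observations achieves this at the price of the $\sqrt{\log T}$ factor that ultimately appears in the first summand of the proposition.
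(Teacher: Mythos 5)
Your reduction is sound: since $\mathcal{U}_t(\gamma)\subset\mathcal{U}_t$, sandwiching the indicators between a $\gamma$-margin ramp surrogate and passing to a uniform deviation bound for the surrogate is a legitimate first step, and it is a genuinely different route from the paper, which proves this proposition via McAllester's PAC-Bayes inequality (Lemma \ref{lem:pacbay}): there one takes $\tilde{\mathbb{P}}=\mathcal{N}(0,\tau^2\bm{I}_n)$ and $\tilde{\mathbb{Q}}=\mathcal{N}(\bm{u}_0,\tau^2\bm{I}_n)$, so the complexity term is the KL divergence $\|\bm{u}_0\|_2^2/(2\tau^2)$, and the Gaussian smoothing error $2\exp(-\underline{a}^2\gamma^2/(2\tau^2))$ converts the smoothed loss back to the $0$- and $2\gamma$-margin indicators; choosing $\tau^2=\underline{a}^2\gamma^2/(2\log T)$ yields exactly the two terms in the statement with no dependence on $n$.

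The genuine gap in your argument is the treatment of the $\min_k$ inside $\psi_t(\bm{u})=\min_k\phi_\gamma(\bm{v}_{t,k}^\top\bm{u}-w_{t,k})$. Talagrand's contraction principle peels off a \emph{single} scalar Lipschitz map composed with each sample's linear functional; it does not apply to a minimum over many constraints. The standard way to handle minima, $\mathcal{R}(\min(\mathcal{F}_1,\dots,\mathcal{F}_K))\le\sum_{k}\mathcal{R}(\mathcal{F}_k)$, multiplies the complexity by the number of constraints per observation, which by Lemma \ref{lemma_linear} (the pairwise ratio conditions \eqref{opc3}) can be of order $n^2$ — this reintroduces polynomial dimension dependence and breaks the dimension-free form of the bound. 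Your proposed fix, a ``peeling/union bound over the $O(nT)$ active constraints'' costing only $\sqrt{\log T}$, does not work as stated: the minimum sits \emph{inside} the per-sample loss, so a global union bound over constraints across the sample does not decouple it, and in any case the count of constraints is governed by $n$, not by $T$, so there is no reason a $\sqrt{\log T}$ factor would absorb it. By contrast, the PAC-Bayes route sidesteps this entirely because the Gaussian perturbation argument bounds the probability that \emph{any} margin constraint flips by $\exp(-\underline{a}^2\gamma^2/(2\tau^2))$ per violated optimality condition, and the complexity term never sees the number of faces of $\mathcal{U}_t$. To salvage your approach you would need either a vector-valued/minimum-specific contraction argument with at most logarithmic dependence on the number of constraints, or you should switch to the PAC-Bayes mechanism.
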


Proposition \ref{prop_generalize} relates the generalization accuracy of any arbitrary $\bm{u}$ with the corresponding objective value of the $\gamma$-margin optimization problem. As $\gamma$ increases, the objective function will decrease, so the right-hand-side becomes tighter. Importantly, the accuracy is defined by the original indicator function (or equivalently, $\mathcal{U}_t$), while the objective value is defined by the $\gamma$-margin indicator function (or equivalently, $\mathcal{U}_t(\gamma)$). The implication is that when we optimize the $\gamma$-margin objective, we can still obtain a bound on the original accuracy Acc$(\bm{u})$ for sufficiently large $\gamma$.

\begin{theorem}
\label{thm_generalize}
Suppose $\mathcal{P}_{\bm{a},b}$ is a continuous distribution and it has a density function upper bounded by $\bar{p}$. Then, under Assumption \ref{assp_distr}, the following inequality holds 
    \begin{align*}
       \prob\left(\mathrm{Acc}(\hat{\bm{u}})
        \ge
        1- \delta
        -
       \frac{\bar{p}}{\min\limits_{i:u_i^*\not=0}|u_i^*|\cdot T^{1/4}}
        -
        \frac{20n^2\log(T)}{\underline{a}T^{1/4}}\right) \ge 1-\frac{4}{T},
    \end{align*}
where $\hat{\bm{u}}$ is one optimal solution of $\text{OPT}_{\delta}(\gamma)$ with $\gamma=\frac{1}{4n^2T^{1/4}}$.
\end{theorem}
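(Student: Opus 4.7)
The plan is to chain together Proposition \ref{prop_generalize} (generalization), the optimality of $\hat{\bm{u}}$ for OPT$_\delta(\gamma)$, and a direct lower bound on the empirical $\gamma$-margin objective evaluated at the true parameter $\bm{u}^*$. Schematically,
\begin{align*}
\text{Acc}(\hat{\bm{u}})
\;\underset{\text{Prop.\ref{prop_generalize}}}{\ge}\;
\tfrac{1}{T}\!\sum_{t=1}^T I_{\mathcal{U}_t(\gamma)}(\hat{\bm{u}}) - E_{\text{gen}}
\;\underset{\text{optimality}}{\ge}\;
\tfrac{1}{T}\!\sum_{t=1}^T I_{\mathcal{U}_t(\gamma)}(\bm{u}^*) - E_{\text{gen}}
\;\underset{\text{Step 3}}{\ge}\;
1-\delta - E_{\text{tube}} - E_{\text{Hoef}} - E_{\text{gen}}.
\end{align*}
Setting $\gamma = 1/(4n^2 T^{1/4})$ converts Prop.\ \ref{prop_generalize}'s $\sqrt{\log T/(\underline{a}^2\gamma^2 T)}$ term into an $O(n^2 \sqrt{\log T}/(\underline{a} T^{1/4}))$ term, while the tube error in Step 3 becomes $\bar{p}/(4|u^*_{\min}|T^{1/4})$; matching these to the two $T^{-1/4}$ terms in the theorem statement. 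The failure probability $4/T$ will be assembled by choosing $\epsilon=1/T$ in Proposition \ref{prop_generalize} and applying Hoeffding with confidence $1-1/T$, plus a small budget for bounding concentration of the number of uncorrupted samples.

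The delicate step is Step 3: lower-bounding $\frac{1}{T}\sum_t I_{\mathcal{U}_t(\gamma)}(\bm{u}^*)$. First I condition on the event $A_t = \{\bm{u}_t = \bm{u}^*\}$, which has probability $1-\delta$ under \eqref{u_generate}; on $A_t$ we have $\bm{x}_t^* = \bar{\bm{x}}_t^*$, so $\mathcal{U}_t = \bar{\mathcal{U}}_t$ and the $\gamma$-margin constraint $\bm{V}_t \bm{u}^* \le \bm{w}_t - \gamma \bm{e}$ is a statement about the slack of $\bm{u}^*$ in its own KKT system for LP$(\bm{u}^*,\bm{a}_t,b_t)$. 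For the fractional knapsack LP, these KKT inequalities have the canonical form $u_i^* a_{t,j} - u_j^* a_{t,i} \ge 0$ for index pairs $(i,j)$ separating integral, fractional, and zero items; thus the $\gamma$-margin is satisfied whenever $u_i^* a_{t,j} - u_j^* a_{t,i} \ge \gamma$ for every such pair.

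For Step 3's core estimate I use a tube-probability argument: since $\bm{a}_t$ has density bounded by $\bar{p}$, a change of variables in one coordinate gives
\begin{align*}
\prob\bigl(|u_i^* a_{t,j} - u_j^* a_{t,i}| < \gamma\bigr)
\;\le\; \frac{2\bar{p}\gamma}{\max(|u_i^*|, |u_j^*|)}
\;\le\; \frac{2\bar{p}\gamma}{\min_{k:u_k^*\neq 0}|u_k^*|},
\end{align*}
and a union bound over the $O(n^2)$ pairs yields $\prob(\bm{u}^* \notin \mathcal{U}_t(\gamma)\mid A_t) \le n^2 \bar{p}\gamma / \min_{k}|u_k^*|$. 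Taking expectations gives $\mathbb{E}[I_{\mathcal{U}_t(\gamma)}(\bm{u}^*)] \ge 1-\delta - n^2\bar{p}\gamma/\min_k|u_k^*|$, and Hoeffding then transfers this to the empirical average at the cost of an $O(\sqrt{\log T/T})$ term.

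The main obstacle is the tube argument itself: making the reduction from ``$\bm{u}^*\in\mathcal{U}_t(\gamma)$'' to ``all pairwise KKT slacks $\ge \gamma$'' clean requires pinning down exactly which pairs $(i,j)$ appear in $\bm{V}_t$ (an analysis of the knapsack break index), and the factor $\min_{k:u_k^*\neq 0}|u_k^*|$ in the denominator only works if one can always choose the pivoting coordinate to be a nonzero-coefficient one, which needs a short case analysis. The remaining arithmetic is routine: substitute $\gamma=1/(4n^2 T^{1/4})$ and absorb the $\sqrt{\log T}$ factors into $\log T$ to arrive at the stated high-probability bound, where the four $1/T$ failure contributions come from (i) Proposition \ref{prop_generalize} with $\epsilon=1/T$, (ii) Hoeffding on the empirical $\gamma$-margin objective at $\bm{u}^*$, and (iii)--(iv) control of auxiliary concentrations invoked along the way.
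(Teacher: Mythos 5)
Your proposal follows essentially the same route as the paper's proof: the same three-link chain (Proposition \ref{prop_generalize}, optimality of $\hat{\bm{u}}$ over OPT$_\delta(\gamma)$, then a lower bound on the empirical $\gamma$-margin objective at $\bm{u}^*$ obtained by conditioning on the uncorrupted samples and a tube/density argument over the $O(n^2)$ KKT slack pairs, followed by Hoeffding), with the same choices $\gamma=1/(4n^2T^{1/4})$ and $\epsilon=1/T$. The only differences are cosmetic (you merge the corruption and margin concentration steps into one Hoeffding application where the paper uses two, and your per-pair tube constant differs slightly), so the argument is correct and matches the paper.
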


Theorem \ref{thm_generalize} states a generalization bound on the accuracy of $\hat{\bm{u}}$ for continuous distributions of $(\bm{a},b).$ From Proposition \ref{prop_generalize}, a larger $\gamma$ leads to a smaller gap between the accuracy and the $\gamma$-margin objective function. Meanwhile, a smaller $\gamma$ leads to a smaller gap between the optimal objective value OPT$_\delta(\gamma)$ and $1-\delta$. In the extreme case of $\gamma=0$, $\E[\text{OPT}_\delta(0)]=\text{Acc}(\bm{u}^*)\ge1-\delta.$ Theorem \ref{thm_generalize} optimizes the value of $\gamma$ to trade off these two aspects. We note that the continuous distribution and the upper bound on the density function make it possible to bound the gap of the second aspect. Finally, we remark that the design of $\gamma$-margin loss function is inspired from the max-margin classifier, but the analysis is entirely different. For the max-margin classifier, the introduction of the margin aims to make the underlying loss function 1-Lipschitz so that a generalization bound using Rademacher complexity can be derived. But for here, our $\gamma$-margin objective function is still a discontinuous one.

\section{Computational Aspects and Discussions}

\begin{algorithm}[ht!]
\caption{Posterior Sampling for the Gaussian Setting}
\label{alg_MCMC}
\begin{algorithmic}[1] 
\State Input: dataset $\mathcal{D}_T=\{(\bm{x}_t^*,\bm{a}_t,b_t)\}_{t=1}^{T}$, number of iterations $K$
\State Initialize $\bm{\theta}^{(0)}$ by randomly sampling from the prior distribution  $\prob_{0}(\bm{\theta})$
\For {$k=1,...,K$}
    \State Draw a random $\bm{\theta}'$ from a pre-determined proposal distribution $\mathbb{Q}(\bm{\theta}'|\bm{\theta}^{(k-1)})$
    \State Compute the acceptance rate: $$r=\min\left\{\frac{\prob_T(\bm\theta') }{\prob_T(\bm\theta^{(k-1)})},1\right\}$$
    \State Set
    \begin{align*}
        \bm{\theta}^{(k)}=
        \begin{cases}
           \bm{\theta}',&\text{w.p. $r$}\\
            \bm{\theta}^{(k-1)}, &\text{w.p. $1-r$}
        \end{cases}
    \end{align*}
    \State 
\EndFor
\State Output: $\bm{\theta}^{(K)}$ 
\end{algorithmic}
\end{algorithm}

In the previous section, we developed theoretical results for both Gaussian and $\delta$-corruption settings. Now we discuss computational aspects with respect to the sampling of the posterior $\mathbb{P}_T(\cdot)$ and the optimization of OPT$_\delta(\gamma).$ As mentioned earlier, the posterior sampling removes the complication of optimizing over $\mathbb{\theta}$ in the maximum likelihood estimation, but still inevitably needs to deal with the sampling and numeric approximation of the likelihood function. Algorithm \ref{alg_MCMC} describes a standard Metropolis–Hastings algorithm to sample from the posterior distribution $\mathbb{P}_{T}(\cdot).$ In the numerical experiments, we choose the proposal distribution $\mathbb{Q}$ to be a Gaussian random perturbation, i.e., $\bm{\theta}' = \text{Proj}(\bm{\theta}^{(k-1)}+\bm{\epsilon})$ where $\bm{\epsilon}$ follows a Gaussian distribution and the projection ensures that $\bm{\theta}'$ stays on the sphere $\mathcal{S}^{n-1}.$ For the acceptance ratio, as the posterior distribution is not in closed form, a Monte Carlo subroutine is needed to estimate the ratio.

\begin{algorithm}[ht!]
\caption{Simulated annealing algorithm for $\delta$-corruption}
\label{alg:SA}
\begin{algorithmic}[1] 
\State Input: dataset $\mathcal{D}_T=\{(\bm{x}_t^*),\bm{a}_t,b_t\}_{t=1}^{T}$, margin $\gamma$, number of iterations $K$, interval length $\tau$
\State Initialize an initial (temperature) $\eta>0$ and the reduction rate $c\in(0,1)$ 
\State Randomly generate the first estimate $\bm{u}^{(0)}$
\For {$k=1,...,K$}
    \If{$k \text{ mod } \tau = 0$}
        \State Update 
        $\eta \leftarrow c \cdot \eta $
    \EndIf
    \State Draw a proposal $\bm{u}'$ from a predetermined proposal distribution $\mathbb{Q}(\bm{u}'|\bm{u}^{(k-1)})$
    \State Compute the acceptance rate:
    \begin{equation}
r=\min\left\{\exp\left\{
        \frac{1}{\eta}\cdot\left(\sum_{t=1}^T I_{\mathcal{U}_t(\gamma)}(\bm{u}')-\sum_{t=1}^T I_{\mathcal{U}_t(\gamma)}(\bm{u}^{(k-1)})\right)      
    \right\},1\right\}
    \label{algo2_likeli}
    \end{equation}
    \State Set
    \begin{align*}
        \bm{u}^{(k)}=
        \begin{cases}
            \bm{u}',&\text{ w.p. $r$}\\
            \bm{u}^{(k-1)}, &\text{ w.p. $1-r$}
        \end{cases}
    \end{align*}
\EndFor
\State Output: $\bm{u}^{(K)}$ 
\end{algorithmic}
\end{algorithm}

Algorithm \ref{alg:SA} presents a simulated annealing algorithm to solve the optimization problem OPT$_{\delta}(\gamma)$ in Section \ref{sec_delta}. It takes a similar MCMC routine as Algorithm \ref{alg_MCMC} and we use the same Gaussian random perturbation for the proposal distribution $\mathbb{Q}$. As the temperature parameter $\eta$ decreases, the sampling distribution in Algorithm \ref{alg:SA} will gradually be more concentrated on the optimal solution set of OPT$_{\delta}(\gamma)$. Algorithm \ref{alg:SA} can be implemented more efficiently than Algorithm \ref{alg_MCMC} in that the likelihood ratio calculation in \eqref{algo2_likeli} is analytical.

Table \ref{tab:acc} reports some numerical results for the two algorithms. For both the Gaussian and $\delta$-corruption settings, we consider three distributions of $\mathcal{P}_{\bm{a}, b}$: (i) a uniform distribution where $\bm{a}\sim$Unif$([1,2]^n)$ and $b\sim$Unif$([1,n])$; (ii) a discrete distribution where $\text{Unif}(\{1,2\}^{n})$ and $b\sim$Unif$({1,...,n})$; (iii) a fixed-$\bm{a}$ distribution where $\bm{a}=(1,...,1)^\top$ and $b\sim$Unif$({1,...,n})$. For the Gaussian case, the true parameters $(\bm{\mu}^*,\kappa^*)$ are uniformly generated from $\mathcal{S}^{n-1}\times[1,10]$, and the accuracy is calculated by $(\bm{\mu}^* \bm{x}^* - \bm{\mu}^* \tilde{\bm{x}}^*)/\bm{\mu}^* \bm{x}^*$ where $\bm{x}^*$ and $\tilde{\bm{x}^*}$ are defined in Corollary \ref{predict_coro_gaussian}. For the Gaussian case, $\bm{u}^*$ is uniformly generated from $\mathcal{S}^{n-1}$ and $\delta$ is set to be $0.1$, and the accuracy is calculated by Acc$(\hat{\bm{u}})/$Acc$(\bm{u}^*)$ where Acc$(u)$ is defined in Section \ref{sec_delta}. The numbers in Table \ref{tab:acc} are reported based on an average of 20 simulation trials, and we run both Algorithm \ref{alg_MCMC} and Algorithm \ref{alg:SA} for $K=1000$ iterations.

We make the following observations from the numerical experiments. First, we remark that the theoretical results in the previous sections provide strong guarantees on the convergence property of the posterior distribution. So the deterioration of the algorithm performance for the case when $n=25$ is solely caused by the inaccuracy of the approximate sampling in either Algorithm \ref{alg_MCMC} or Algorithm \ref{alg:SA}. Such inaccuracy can definitely be mitigated to some extent by a more efficient algorithm implementation such as parallel computing. However, we argue that the performance deterioration as $n$ grows may point to a curse of dimensionality that is intrinsic to this estimation problem. Essentially, we aim to estimate a high-dimensional distribution only through partial information, i.e., the sets $\mathcal{U}_t$'s. On the positive end, the algorithms work well for $n\le 10,$ so if the learner has the power of choosing $(\bm{a}_t,b_t)$, s/he can break up the high-dimensional estimation problem into a number of low-dimensional estimation problems by focusing on a handful of dimensions each time. Moreover, we provide a visualization of the condition \eqref{eqn:equiv} in Figure \ref{fig:gaurnd} for $n=5$ calculated based on simulation. The visualization supports the existence of $L$ and thus the identifiability of the true parameters when the posterior sampling can be accurately fulfilled. 
0
\begin{figure}[h]
\begin{minipage}[T]{0.63\textwidth}
\begin{tabular}{ cc c c c c}
\toprule
 & & {$n=3$} & {$n=5$} & {$n=10$} & {$n=25$}\\
\midrule
 & {(i)}& 99.9\% & 99.9\% & 98.8\% & 59.9\%\\\
{ Gaussian}  & {(ii)} & 99.9\% & 99.3\% &  96.9\% & 56.9\%\\
                    & {(iii)} & 99.9\% & 94.8\%&  92.6\% & 68.5\%\\
\midrule
  & {(i)} & 99.9\% & 96.7\% &   96.0\% & 55.7\% \\
 { $\delta$-corru.}& {(ii)} & 99.6\% &  97.9\%  &  97.6\%&  63.1\%\\
& {(iii)} & 99.9\% & 98.7\%  &  87.1\%& 58.7\%\\
\bottomrule
\end{tabular} 
\captionof{table}{Predictive accuracies under two settings.}
\label{tab:acc}
\end{minipage}%
  \begin{minipage}[T]{.5\textwidth}
  \includegraphics[width=\textwidth]{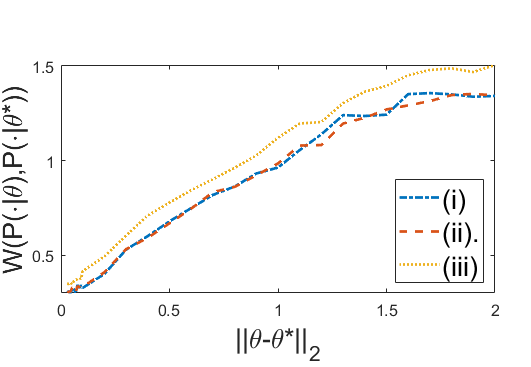}
  \caption{Visualization of \eqref{eqn:equiv}.}
  \label{fig:gaurnd}
\end{minipage}%
\end{figure}

We conclude our discussion with the following remarks.
 
Query-based model with learner-chosen $(\bm{a}_t,b_t)$: In this paper, we have focused on the case where the constraints $(\bm{a}_t,b_t)$'s are stochastically generated. When the concentration parameter $\kappa$ is known for the Gaussian case, there is an efficient way of learning $\bm{\mu}$ through choosing $(\bm{a}_t, b_t)$'s (See the Appendix). In addition, the numerical experiments above also inspire a method that dismantles the high-dimensional estimation problem into a number of low-dimensional problems. Another interesting and important question is whether there exist designs of $(\bm{a}_t,b_t)$'s such that the posterior sampling can be more efficiently carried out. 

Multiple constraints and nonlinear utility: The results in this paper are presented under the setting of a linear objective and a single constraint. We emphasize that the results can be easily generalized to the case of multiple constraints and parameterized nonlinear utility. Thus our result can be viewed as a preliminary effort to address the problem of stochastic inverse optimization. Our conjecture is that when the set $\mathcal{U}_t$ corresponds to a multiple-constraint problem, it may feature more structure and thus facilitate the learning of the utility distribution.

Choice modeling: The stochastic utility model in our paper also draws an interesting connection with the literature on choice modeling, which is a pillar for the pricing and assortment problems in revenue management \citep{talluri2004theory, gallego2019revenue}. For most of the existing choice models, the learning problem can be viewed as a special case of our study by letting $\bm{a}=(1,...,1)^\top$ and $b=1$. The results in our paper complement to this line of literature in developing a model where customers can make multiple purchases. 

\bibliographystyle{informs2014} 
\bibliography{sample.bib} 

\begin{thebibliography}{33}
\expandafter\ifx\csname natexlab\endcsname\relax\def\natexlab#1{#1}\fi
\expandafter\ifx\csname url\endcsname\relax
  \def\url#1{{\tt #1}}\fi
\expandafter\ifx\csname urlprefix\endcsname\relax\def\urlprefix{URL }\fi
\expandafter\ifx\csname urlstyle\endcsname\relax
  \expandafter\ifx\csname doi\endcsname\relax
  \def\doi#1{doi:\discretionary{}{}{}#1}\fi \else
  \expandafter\ifx\csname doi\endcsname\relax
  \def\doi{doi:\discretionary{}{}{}\begingroup \urlstyle{rm}\Url}\fi \fi

\bibitem[{Afriat(1967)}]{afriat1967construction}
Afriat, Sydney~N. 1967.
\newblock The construction of utility functions from expenditure data.
\newblock {\it International economic review\/} {\bf 8}(1) 67--77.

\bibitem[{Ahuja and Orlin(2001)}]{ahuja2001inverse}
Ahuja, Ravindra~K, James~B Orlin. 2001.
\newblock Inverse optimization.
\newblock {\it Operations Research\/} {\bf 49}(5) 771--783.

\bibitem[{Amin et~al.(2015)Amin, Cummings, Dworkin, Kearns, and
  Roth}]{amin2015online}
Amin, Kareem, Rachel Cummings, Lili Dworkin, Michael Kearns, Aaron Roth. 2015.
\newblock Online learning and profit maximization from revealed preferences.
\newblock {\it Proceedings of the AAAI Conference on Artificial
  Intelligence\/}, vol.~29.

\bibitem[{Aswani et~al.(2018)Aswani, Shen, and Siddiq}]{aswani2018inverse}
Aswani, Anil, Zuo-Jun Shen, Auyon Siddiq. 2018.
\newblock Inverse optimization with noisy data.
\newblock {\it Operations Research\/} {\bf 66}(3) 870--892.

\bibitem[{Balcan et~al.(2014)Balcan, Daniely, Mehta, Urner, and
  Vazirani}]{balcan2014learning}
Balcan, Maria-Florina, Amit Daniely, Ruta Mehta, Ruth Urner, Vijay~V Vazirani.
  2014.
\newblock Learning economic parameters from revealed preferences.
\newblock {\it International Conference on Web and Internet Economics\/}.
  Springer, 338--353.

\bibitem[{Baricz(2010)}]{baricz2010bounds}
Baricz, {\'A}rp{\'a}d. 2010.
\newblock Bounds for modified bessel functions of the first and second kinds.
\newblock {\it Proceedings of the Edinburgh Mathematical Society\/} {\bf 53}(3)
  575--599.

\bibitem[{B{\"a}rmann et~al.(2018)B{\"a}rmann, Martin, Pokutta, and
  Schneider}]{barmann2018online}
B{\"a}rmann, Andreas, Alexander Martin, Sebastian Pokutta, Oskar Schneider.
  2018.
\newblock An online-learning approach to inverse optimization.
\newblock {\it arXiv preprint arXiv:1810.12997\/} .

\bibitem[{Beigman and Vohra(2006)}]{beigman2006learning}
Beigman, Eyal, Rakesh Vohra. 2006.
\newblock Learning from revealed preference.
\newblock {\it Proceedings of the 7th ACM Conference on Electronic Commerce\/}.
  36--42.

\bibitem[{Bertsimas et~al.(2012)Bertsimas, Gupta, and
  Paschalidis}]{bertsimas2012inverse}
Bertsimas, Dimitris, Vishal Gupta, Ioannis~Ch Paschalidis. 2012.
\newblock Inverse optimization: A new perspective on the black-litterman model.
\newblock {\it Operations research\/} {\bf 60}(6) 1389--1403.

\bibitem[{Besbes et~al.(2021)Besbes, Fonseca, and Lobel}]{besbes2021contextual}
Besbes, Omar, Yuri Fonseca, Ilan Lobel. 2021.
\newblock Contextual inverse optimization: Offline and online learning.
\newblock {\it Available at SSRN 3863366\/} .

\bibitem[{Birge et~al.(2017)Birge, Horta{\c{c}}su, and
  Pavlin}]{birge2017inverse}
Birge, John~R, Ali Horta{\c{c}}su, J~Michael Pavlin. 2017.
\newblock Inverse optimization for the recovery of market structure from market
  outcomes: An application to the miso electricity market.
\newblock {\it Operations Research\/} {\bf 65}(4) 837--855.

\bibitem[{Boucheron et~al.(2013)Boucheron, Lugosi, and
  Massart}]{boucheron2013concentration}
Boucheron, St{\'e}phane, G{\'a}bor Lugosi, Pascal Massart. 2013.
\newblock {\it Concentration inequalities: A nonasymptotic theory of
  independence\/}.
\newblock Oxford university press.

\bibitem[{Burton and Toint(1992)}]{burton1992instance}
Burton, Didier, Ph~L Toint. 1992.
\newblock On an instance of the inverse shortest paths problem.
\newblock {\it Mathematical programming\/} {\bf 53}(1) 45--61.

\bibitem[{Chae et~al.(2021)Chae, De~Blasi, and Walker}]{chae2021posterior}
Chae, Minwoo, Pierpaolo De~Blasi, Stephen~G Walker. 2021.
\newblock Posterior asymptotics in wasserstein metrics on the real line.
\newblock {\it Electronic Journal of Statistics\/} {\bf 15}(2) 3635--3677.

\bibitem[{Chen and K{\i}l{\i}n{\c{c}}-Karzan(2020)}]{chen2020online}
Chen, Violet~Xinying, Fatma K{\i}l{\i}n{\c{c}}-Karzan. 2020.
\newblock Online convex optimization perspective for learning from dynamically
  revealed preferences.
\newblock {\it arXiv preprint arXiv:2008.10460\/} .

\bibitem[{Dong et~al.(2018{\natexlab{a}})Dong, Chen, and
  Zeng}]{dong2018generalized}
Dong, Chaosheng, Yiran Chen, Bo~Zeng. 2018{\natexlab{a}}.
\newblock Generalized inverse optimization through online learning.
\newblock {\it Advances in Neural Information Processing Systems\/} {\bf 31}.

\bibitem[{Dong and Zeng(2020)}]{dong2020expert}
Dong, Chaosheng, Bo~Zeng. 2020.
\newblock Expert learning through generalized inverse multiobjective
  optimization: Models, insights, and algorithms.
\newblock {\it International Conference on Machine Learning\/}. PMLR,
  2648--2657.

\bibitem[{Dong et~al.(2018{\natexlab{b}})Dong, Roth, Schutzman, Waggoner, and
  Wu}]{dong2018strategic}
Dong, Jinshuo, Aaron Roth, Zachary Schutzman, Bo~Waggoner, Zhiwei~Steven Wu.
  2018{\natexlab{b}}.
\newblock Strategic classification from revealed preferences.
\newblock {\it Proceedings of the 2018 ACM Conference on Economics and
  Computation\/}. 55--70.

\bibitem[{Gallego et~al.(2019)Gallego, Topaloglu et~al.}]{gallego2019revenue}
Gallego, Guillermo, Huseyin Topaloglu, et~al. 2019.
\newblock {\it Revenue management and pricing analytics\/}, vol. 209.
\newblock Springer.

\bibitem[{Ghosal et~al.(2000)Ghosal, Ghosh, and Van
  Der~Vaart}]{ghosal2000convergence}
Ghosal, Subhashis, Jayanta~K Ghosh, Aad~W Van Der~Vaart. 2000.
\newblock Convergence rates of posterior distributions.
\newblock {\it Annals of Statistics\/}  500--531.

\bibitem[{Gibbs and Su(2002)}]{gibbs2002choosing}
Gibbs, Alison~L, Francis~Edward Su. 2002.
\newblock On choosing and bounding probability metrics.
\newblock {\it International statistical review\/} {\bf 70}(3) 419--435.

\bibitem[{Gray(2011)}]{gray2011entropy}
Gray, Robert~M. 2011.
\newblock {\it Entropy and information theory\/}.
\newblock Springer Science \& Business Media.

\bibitem[{Keshavarz et~al.(2011)Keshavarz, Wang, and
  Boyd}]{keshavarz2011imputing}
Keshavarz, Arezou, Yang Wang, Stephen Boyd. 2011.
\newblock Imputing a convex objective function.
\newblock {\it 2011 IEEE international symposium on intelligent control\/}.
  IEEE, 613--619.

\bibitem[{Liese and Vajda(2006)}]{liese2006divergences}
Liese, Friedrich, Igor Vajda. 2006.
\newblock On divergences and informations in statistics and information theory.
\newblock {\it IEEE Transactions on Information Theory\/} {\bf 52}(10)
  4394--4412.

\bibitem[{McAllester(2003)}]{mcallester2003pac}
McAllester, David~A. 2003.
\newblock Pac-bayesian stochastic model selection.
\newblock {\it Machine Learning\/} {\bf 51}(1) 5--21.

\bibitem[{Mohajerin~Esfahani et~al.(2018)Mohajerin~Esfahani,
  Shafieezadeh-Abadeh, Hanasusanto, and Kuhn}]{mohajerin2018data}
Mohajerin~Esfahani, Peyman, Soroosh Shafieezadeh-Abadeh, Grani~A Hanasusanto,
  Daniel Kuhn. 2018.
\newblock Data-driven inverse optimization with imperfect information.
\newblock {\it Mathematical Programming\/} {\bf 167}(1) 191--234.

\bibitem[{Romanazzi(2014)}]{romanazzi2014discriminant}
Romanazzi, Mario. 2014.
\newblock Discriminant analysis with high dimensional von mises-fisher
  distributions.
\newblock {\it 8th Annual International Conference on Statistics\/}. Athens
  Institute for Education and Research, 1--16.

\bibitem[{Samuelson(1948)}]{samuelson1948consumption}
Samuelson, Paul~A. 1948.
\newblock Consumption theory in terms of revealed preference.
\newblock {\it Economica\/} {\bf 15}(60) 243--253.

\bibitem[{Talluri et~al.(2004)Talluri, Van~Ryzin, and
  Van~Ryzin}]{talluri2004theory}
Talluri, Kalyan~T, Garrett Van~Ryzin, Garrett Van~Ryzin. 2004.
\newblock {\it The theory and practice of revenue management\/}, vol.~1.
\newblock Springer.

\bibitem[{Van~der Vaart(2000)}]{van2000asymptotic}
Van~der Vaart, Aad~W. 2000.
\newblock {\it Asymptotic statistics\/}, vol.~3.
\newblock Cambridge university press.

\bibitem[{Varian(2006)}]{varian2006revealed}
Varian, Hal~R. 2006.
\newblock Revealed preference.
\newblock {\it Samuelsonian economics and the twenty-first century\/}  99--115.

\bibitem[{Zadimoghaddam and Roth(2012)}]{zadimoghaddam2012efficiently}
Zadimoghaddam, Morteza, Aaron Roth. 2012.
\newblock Efficiently learning from revealed preference.
\newblock {\it International Workshop on Internet and Network Economics\/}.
  Springer, 114--127.

\bibitem[{Zhang and Liu(1996)}]{zhang1996calculating}
Zhang, Jianzhong, Zhenhong Liu. 1996.
\newblock Calculating some inverse linear programming problems.
\newblock {\it Journal of Computational and Applied Mathematics\/} {\bf 72}(2)
  261--273.

\end{thebibliography}

\appendix
\section{Auxiliary Lemmas}
We present some preliminary lemmas in this section. Most of them are basic inequalities in Information Theory, so they are only for auxiliary purposes in our proofs of the main theorems and the corollaries. 

\begin{lemma}[Pinsker’s inequality]
\label{lem:pinsker}
    For any two distributions $\mathcal{P}_1$ and $\mathcal{P}_2$, 
    $$
        D_{TV}(\mathcal{P}_1,\mathcal{P}_2)
        \leq
        \sqrt{\frac{1}{2}D_{KL}(\mathcal{P}_1,\mathcal{P}_2)},
    $$
    where $D_{TV}(\cdot,\cdot)$ denotes the total variation distance between two distributions, and $D_{KL}(\cdot,\cdot)$ denotes the KL-divergence between distributions. 
\end{lemma}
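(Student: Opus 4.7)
The plan is to establish the inequality by first reducing to the Bernoulli case and then proving the binary version by elementary calculus. Concretely, I would first observe that total variation can be written as $D_{TV}(\mathcal{P}_1,\mathcal{P}_2) = \sup_A |\mathcal{P}_1(A) - \mathcal{P}_2(A)|$, where the supremum runs over measurable events. So it suffices to fix an arbitrary event $A$, let $p = \mathcal{P}_1(A)$ and $q = \mathcal{P}_2(A)$, and bound $|p-q|$ by the KL divergence.

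The reduction step uses the data-processing inequality for KL divergence applied to the binary partition $\{A, A^c\}$: if $\mathrm{Ber}(p)$ and $\mathrm{Ber}(q)$ denote the induced Bernoulli distributions on $\{0,1\}$, then
\[
D_{KL}(\mathcal{P}_1,\mathcal{P}_2) \;\geq\; D_{KL}(\mathrm{Ber}(p),\mathrm{Ber}(q)) \;=\; p\log\frac{p}{q} + (1-p)\log\frac{1-p}{1-q}.
\]
Given this, the problem reduces to the scalar inequality $D_{KL}(\mathrm{Ber}(p),\mathrm{Ber}(q)) \geq 2(p-q)^2$ for $p,q\in(0,1)$, which, after taking sup over $A$ and rearranging, yields Pinsker's inequality.

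To prove the scalar inequality, I would fix $q$ and define $g(p) \coloneqq D_{KL}(\mathrm{Ber}(p),\mathrm{Ber}(q)) - 2(p-q)^2$. A direct computation gives $g(q)=0$, $g'(q)=0$, and
\[
g''(p) = \frac{1}{p(1-p)} - 4 \;\geq\; 0,
\]
since $p(1-p)\leq 1/4$ on $(0,1)$. Hence $g'$ is non-decreasing, which combined with $g'(q)=0$ forces $g(p)\geq g(q)=0$ for every $p\in(0,1)$. Chaining this with the data-processing step and taking square roots gives the claim.

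The proof is standard and largely mechanical; the only slightly delicate piece is articulating the data-processing step carefully (e.g.\ handling boundary cases $p\in\{0,1\}$ or $q\in\{0,1\}$ where $D_{KL}$ is either $0$ or $+\infty$), but these are not real obstacles since the inequality is trivially true on those boundaries. Because this lemma is invoked only as a standard information-theoretic tool in the paper's subsequent arguments, I would keep the write-up brief and cite a textbook such as Cover and Thomas for the full details.
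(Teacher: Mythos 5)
Your proposal is correct. The paper does not actually prove this lemma---it simply cites Lemma 6.2 of Gray's \emph{Entropy and Information Theory}---so there is no in-paper argument to compare against. The proof you sketch (reduce to the Bernoulli case via the data-processing inequality for KL divergence applied to the partition $\{A,A^c\}$, then establish the scalar bound $D_{KL}(\mathrm{Ber}(p),\mathrm{Ber}(q))\geq 2(p-q)^2$ by checking $g(q)=g'(q)=0$ and $g''(p)=\tfrac{1}{p(1-p)}-4\geq 0$) is the standard textbook derivation and is sound, including your handling of the boundary cases where the KL divergence is $0$ or $+\infty$. Given that the paper treats this as an off-the-shelf auxiliary fact, either writing out your argument or retaining the citation is acceptable.
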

\begin{proof}
    We refer to Lemma 6.2 of the book \cite{gray2011entropy}.
\end{proof}

\begin{lemma}[Data processing inequality]
    \label{lem:kldes}    
    Let $\{K_{\lambda}\}_{\lambda\in\mathcal{X}}$ be a set of random variables indexed by parameter $\lambda$ in some space $\mathcal{X}$. Consider two random variables $\Lambda_1,\Lambda_2$ taking values in $\mathcal{X}$. The following inequality holds 
    $$
        D_{KL}(K_{\Lambda_1},K_{\Lambda_2})\leq D_{KL}(\Lambda_1,\Lambda_2),
    $$
    where $D_{KL}(\cdot,\cdot)$ denotes the KL-divergence between two distributions. Here $\{K_\lambda\}_{\lambda\in\mathcal{X}}$ is also called a Markov kernel, a transition probability distribution, or a statistical kernel.
\end{lemma}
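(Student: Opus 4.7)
The plan is to prove the inequality by constructing an auxiliary joint distribution and applying the chain rule for KL divergence in two different orders. First I would let $P_i$ denote the joint law of the pair $(\Lambda_i, X_i)$ obtained by sampling $\Lambda_i$ from its marginal and then $X_i \mid \Lambda_i = \lambda \sim K_\lambda$, using the \emph{same} Markov kernel $\{K_\lambda\}_{\lambda \in \mathcal{X}}$ for both $i=1,2$. Note that the $X_i$-marginal of $P_i$ is exactly the mixture distribution $K_{\Lambda_i}$ that appears in the lemma statement.

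Next I would apply the chain rule for KL divergence, first conditioning on $\Lambda$:
$$
D_{KL}(P_1, P_2) \;=\; D_{KL}(\Lambda_1, \Lambda_2) + \mathbb{E}_{\lambda \sim \Lambda_1}\bigl[D_{KL}(K_\lambda, K_\lambda)\bigr] \;=\; D_{KL}(\Lambda_1, \Lambda_2),
$$
where the inner KL term vanishes because the conditional distributions of $X_1$ and $X_2$ given the same $\lambda$ are identical. Applying the chain rule in the opposite order, conditioning first on $X$, gives
$$
D_{KL}(P_1, P_2) \;=\; D_{KL}(X_1, X_2) + \mathbb{E}_{x \sim X_1}\bigl[D_{KL}(\Lambda_1 \mid X_1 = x,\, \Lambda_2 \mid X_2 = x)\bigr] \;\geq\; D_{KL}(X_1, X_2),
$$
since KL divergence is nonnegative. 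Combining the two displays and using that $X_i$ has marginal $K_{\Lambda_i}$ yields the claimed inequality $D_{KL}(K_{\Lambda_1}, K_{\Lambda_2}) \leq D_{KL}(\Lambda_1, \Lambda_2)$.

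As an alternative route, one could bypass the chain rule entirely and prove the bound by applying the log-sum inequality pointwise in $x$ to the mixture density $\int p_i(\lambda)\, k_\lambda(x)\, d\lambda$ and then integrating; this gives the same conclusion but is more computational.

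The main obstacle: there isn't really a technical difficulty here, as this is a textbook result in information theory. The only care required is measure-theoretic bookkeeping (joint measurability of the kernel, existence of densities with respect to a common dominating measure, and a version of the chain rule that holds in general Polish spaces). Consistent with the style of Lemma \ref{lem:pinsker}, the cleanest write-up is probably a one-line citation to a standard reference such as \cite{gray2011entropy}, with the sketch above as the underlying mechanism.
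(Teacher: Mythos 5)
Your argument is correct, and it is the standard two-way chain-rule proof of the data processing inequality for KL divergence: build the joint law $(\Lambda_i, X_i)$ with a common kernel, decompose $D_{KL}(P_1,P_2)$ once conditioning on $\Lambda$ (where the conditional term vanishes) and once conditioning on $X$ (where the conditional term is nonnegative), and compare. The paper itself does not prove the lemma at all --- it simply cites Theorem 14 of \cite{liese2006divergences} --- so your write-up supplies an explicit mechanism where the paper defers to a reference. Your closing remark correctly anticipates this: in the paper's style the ``proof'' is a one-line citation, and your chain-rule sketch (or the log-sum-inequality alternative you mention) is exactly the underlying argument that such a citation hides. The only caveat, which you already flag, is the measure-theoretic bookkeeping needed for the chain rule on general spaces; since the kernel here is the map $\bm{\theta}\mapsto\mathbb{P}((\bm{x}^*,\bm{a},b)\,|\,\bm{\theta})$ on a compact parameter set with densities against a common dominating measure, this is unproblematic in the paper's application.
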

\begin{proof}
    We refer to Theorem 14 of the article \cite{liese2006divergences}.
\end{proof}

\begin{lemma}[Packing number]
    \label{lem:pacnum}
    Let $\mathcal{B}_r$ denote the ball in $\mathbb{R}^n$  centered at original point with radius $r$. Then, the $\epsilon$-packing number of $\mathcal{B}_r$ is bounded by 
    $$
    \left(1+\frac{2r}{\epsilon}\right)^n.
    $$
    In other words, there exist at most $\left(1+\frac{2r}{\epsilon}\right)^n$ disjoint $\frac{\epsilon}{2}$ balls in $\mathcal{B}_r$.
\end{lemma}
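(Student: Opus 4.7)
The plan is to prove this by the standard volume comparison argument. Given an $\epsilon$-packing $\{x_1,\dots,x_N\}\subset\mathcal{B}_r$, the open balls $B(x_i,\epsilon/2)$ are pairwise disjoint by definition of packing (their centers are pairwise at distance greater than $\epsilon$). The key geometric observation is that each of these small balls is contained in the slightly enlarged ball $\mathcal{B}_{r+\epsilon/2}$, because $x_i\in\mathcal{B}_r$ and every point within distance $\epsilon/2$ of $x_i$ lies in $\mathcal{B}_{r+\epsilon/2}$ by the triangle inequality.

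Next I would invoke additivity of Lebesgue measure over the disjoint union and the fact that the $n$-dimensional volume of a Euclidean ball scales as the $n$-th power of its radius. This gives the inequality
\[
N\cdot c_n (\epsilon/2)^n \;=\; \sum_{i=1}^N \mathrm{Vol}\bigl(B(x_i,\epsilon/2)\bigr) \;\le\; \mathrm{Vol}\bigl(\mathcal{B}_{r+\epsilon/2}\bigr) \;=\; c_n(r+\epsilon/2)^n,
\]
where $c_n$ is the volume of the unit ball in $\mathbb{R}^n$. Cancelling the common factor $c_n$ and rearranging yields
\[
N \;\le\; \left(\frac{r+\epsilon/2}{\epsilon/2}\right)^n \;=\; \left(1+\frac{2r}{\epsilon}\right)^n,
\]
which is exactly the claimed bound.

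There is essentially no obstacle here; this is a textbook volume-counting argument and the lemma is standard. The only minor subtlety worth stating cleanly in the write-up is the convention on the packing (open balls of radius $\epsilon/2$ around centers pairwise separated by more than $\epsilon$), since the statement of the lemma phrases the conclusion in terms of the maximum number of disjoint $\epsilon/2$-balls with centers in $\mathcal{B}_r$. Under either convention the same volumetric inequality applies, so the bound is identical. I would therefore present the proof in three short lines: (i) containment of each small ball in $\mathcal{B}_{r+\epsilon/2}$, (ii) disjointness plus additivity, (iii) scaling of ball volume and rearrangement.
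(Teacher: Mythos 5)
Your proof is correct and is essentially identical to the paper's own argument: both bound the packing number by observing that the disjoint $\epsilon/2$-balls centered in $\mathcal{B}_r$ are contained in $\mathcal{B}_{r+\epsilon/2}$ and then comparing volumes, using the fact that the volume of a Euclidean ball scales as the $n$-th power of its radius. Your write-up is in fact slightly more careful than the paper's (which omits the constant $c_n$ and the explicit containment step), but the approach is the same.
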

\begin{proof}
    Assume that there are $M$ disjoint $\epsilon/2$-balls. Then, the total volume of those $M$ balls cannot be larger than the volume of a $r+\epsilon/2$ ball, i.e.,
    \begin{align*}
        M\cdot\left(\frac{\epsilon}{2}\right)^n
        \leq
        (r+\frac{\epsilon}{2})^n,
    \end{align*}
    which implies 
    $$
        M\leq\left(1+\frac{2r}{\epsilon}\right)^n.
    $$
\end{proof}

\begin{lemma}[Hoeffding's inequality]
Let $X_1, ..., X_T$ be independent random variables such that $X_t$ takes its values in $[u_t, v_t]$ almost surely for all $t\le T.$
Then for every $s>0,$
$$\prob\left(\left\vert \frac{1}{T}\sum_{t=1}^n X_t -\E X_t \right\vert \ge s\right) \le 2\exp\left(-\frac{2T^2s^2}{\sum_{i=1}^{n}(u_t-v_t)^2}\right).$$
\label{HF1}
\end{lemma}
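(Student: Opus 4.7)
This is classical Hoeffding's inequality, so the plan is to follow the standard Chernoff--Cram\'er route. First I would reduce to a one-sided deviation by symmetry: after bounding the upper tail $\frac{1}{T}\sum_{t=1}^T (X_t - \E X_t) \ge s$, the same argument applied to $-X_t$ handles the lower tail, and a union bound produces the stated prefactor of $2$.

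For the upper tail, for any $\lambda > 0$ Markov's inequality applied to the exponential of the centered sum gives
$$\prob\!\left(\sum_{t=1}^T (X_t - \E X_t) \ge Ts\right) \le e^{-\lambda T s}\, \prod_{t=1}^T \E\!\left[e^{\lambda (X_t - \E X_t)}\right],$$
where independence of the $X_t$ has been used to factor the moment generating function.

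The key auxiliary step is Hoeffding's lemma: for any $X_t\in [u_t, v_t]$ almost surely,
$$\E\!\left[e^{\lambda (X_t - \E X_t)}\right] \le \exp\!\left(\tfrac{1}{8}\lambda^2 (v_t - u_t)^2\right).$$
I would establish this as a sublemma by using convexity of $x \mapsto e^{\lambda x}$ to dominate it by its secant on $[u_t, v_t]$, taking expectation to pass to the centered variable, and then doing one-variable calculus on the resulting log-MGF. Substituting the lemma into the Chernoff bound and minimizing $\exp\bigl(-\lambda Ts + \tfrac{1}{8}\lambda^2 \sum_t (v_t-u_t)^2\bigr)$ over $\lambda>0$ at $\lambda^\star = 4Ts/\sum_t (v_t-u_t)^2$ yields $\exp\bigl(-2T^2 s^2/\sum_t (v_t-u_t)^2\bigr)$, completing the argument.

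I do not foresee any genuine obstacle, since every step is textbook and parallels the treatment in standard references (e.g.\ the book by Boucheron, Lugosi, and Massart). The only spot that warrants a little care is Hoeffding's lemma itself, where the precise constant $1/8$---and hence the factor $2$ in the final exponent---is pinned down; the remainder is just bookkeeping of the Chernoff optimization and the two-sided union bound.
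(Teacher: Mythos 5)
Your proposal is correct and follows the standard Chernoff--Cram\'er argument with Hoeffding's lemma, which is exactly the treatment in the reference the paper cites (the paper itself offers no proof beyond pointing to Chapter 2 of Boucheron, Lugosi, and Massart). Nothing further is needed.
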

\begin{proof}
    We refer to Chapter 2 of the book \citep{boucheron2013concentration}.
\end{proof}

\begin{lemma}[Doob's consistency theorem]
    \label{lem:doob}
    Suppose that $\mathbb{P}(\cdot|\bm{\theta})\not=\mathbb{P}(\cdot|\bm{\theta}')$ whenever $\bm{\theta}\not=\bm{\theta}'$. Then, for every prior probability measure on the parameter space, the sequence of posterior measures converges to the point mass distribution of the true parameter in distribution for almost every $\bm{\theta}$.
\end{lemma}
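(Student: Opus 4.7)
The plan is to prove Doob's consistency theorem by combining Lévy's upward martingale convergence theorem with a measurable identification of the parameter from the infinite data sequence. I would work on the canonical joint probability space $(\Theta\times\mathcal{X}^\infty,\mathbb{P})$ where $\bm{\theta}$ has the given prior distribution $\Pi$ and, conditional on $\bm{\theta}$, the observations $X_1,X_2,\ldots$ are i.i.d.\ from $\mathbb{P}(\cdot|\bm{\theta})$. Let $\mathcal{F}_n=\sigma(X_1,\ldots,X_n)$ and $\mathcal{F}_\infty=\sigma(X_1,X_2,\ldots)$; the posterior after $n$ observations is, by definition, the regular conditional distribution of $\bm{\theta}$ given $\mathcal{F}_n$.

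The first step is to use the identifiability hypothesis to exhibit a measurable map $\hat{\bm{\theta}}:\mathcal{X}^\infty\to\Theta$ such that $\hat{\bm{\theta}}(X_1,X_2,\ldots)=\bm{\theta}$ holds $\mathbb{P}$-almost surely. By the Glivenko--Cantelli-type strong law on a Polish sample space, the empirical distribution $\hat F_n$ converges almost surely (in a suitable metric) to $\mathbb{P}(\cdot|\bm{\theta})$. The identifiability condition says the map $\bm{\theta}\mapsto\mathbb{P}(\cdot|\bm{\theta})$ is injective, so the limiting empirical measure determines $\bm{\theta}$; a measurable selection theorem (Kuratowski--Ryll-Nardzewski or Lusin) produces a Borel inverse defined on the range, which yields the desired measurable $\hat{\bm{\theta}}$. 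Consequently $\bm{\theta}$ is $\mathcal{F}_\infty$-measurable up to a $\mathbb{P}$-null set.

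The second step is to apply Lévy's upward theorem: for any bounded continuous $f:\Theta\to\mathbb{R}$, the sequence $M_n\coloneqq \mathbb{E}[f(\bm{\theta})\mid\mathcal{F}_n]$ is a uniformly integrable martingale, hence converges $\mathbb{P}$-almost surely to $\mathbb{E}[f(\bm{\theta})\mid\mathcal{F}_\infty]=f(\bm{\theta})$, using the measurability from step one. Choosing a countable family $\{f_k\}$ that is weak-convergence-determining on the Polish space $\Theta$, a single $\mathbb{P}$-null set handles all $k$ simultaneously, and we conclude that the posterior measures converge weakly to $\delta_{\bm{\theta}}$ on a set of full $\mathbb{P}$-measure in $\Theta\times\mathcal{X}^\infty$. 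An application of Fubini then gives that, for $\Pi$-almost every $\bm{\theta}$, the posterior converges weakly to the point mass at $\bm{\theta}$ under $\mathbb{P}(\cdot|\bm{\theta})^\infty$, which is exactly the claim.

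The main obstacle is the measurable selection step: translating injectivity of $\bm{\theta}\mapsto\mathbb{P}(\cdot|\bm{\theta})$ into a jointly measurable inverse. Everything else is a fairly mechanical application of the martingale convergence theorem. In the setting of this paper both $\Theta\subseteq\mathcal{S}^{n-1}\times(\underline{\kappa},\bar{\kappa})$ and the observation space are Polish, so the measurable selection hypotheses are satisfied and the classical argument goes through verbatim; accordingly, the cleanest presentation is simply to cite Doob's original theorem (as the paper does) and verify only that our model meets its standing regularity assumptions.
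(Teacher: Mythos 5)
Your proposal is correct, and it is essentially the classical proof of the result that the paper handles by citation alone (Theorem 10.10 of van der Vaart's \emph{Asymptotic Statistics}): the paper's ``proof'' is a one-line reference, whereas you reconstruct the underlying argument --- Doob's measurability trick plus L\'evy's upward martingale convergence theorem --- which is exactly what sits behind that reference. The two steps are sound: identifiability together with the strong law applied to a countable measure-determining family of test functions shows that the infinite data sequence determines $\mathbb{P}(\cdot|\bm{\theta})$ and hence $\bm{\theta}$ almost surely, and then $\mathbb{E}[f(\bm{\theta})\mid\mathcal{F}_n]\to f(\bm{\theta})$ a.s.\ for a countable convergence-determining class of $f$'s, with Fubini converting the joint-measure statement into the ``for almost every $\bm{\theta}$'' form. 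One small precision worth fixing: the tool that turns injectivity of $\bm{\theta}\mapsto\mathbb{P}(\cdot|\bm{\theta})$ into a Borel inverse on the range is the Lusin--Souslin theorem (an injective Borel map between standard Borel spaces has Borel image and Borel inverse), not really Kuratowski--Ryll-Nardzewski measurable selection, and it additionally requires that $\bm{\theta}\mapsto\mathbb{P}(\cdot|\bm{\theta})$ be Borel as a map into the space of probability measures --- both of which hold here since $\Theta\subset\mathcal{S}^{n-1}\times(\underline{\kappa},\bar{\kappa})$ and the observation space are Polish and the likelihood depends continuously enough on $\bm{\theta}$. Your closing remark that the cleanest presentation is to cite Doob's theorem and verify the regularity hypotheses is precisely what the paper does; your write-up buys transparency about where identifiability enters, at the cost of some measure-theoretic bookkeeping the citation hides.
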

\begin{proof}
    We refer to 10.10 of the book \cite{van2000asymptotic}.
\end{proof}

\begin{lemma}
\label{lem:pacbay}
Let $\mathcal{F}$ be a set of functions whose domain is the support of the distribution of $(\bm{x}^*,\bm{a},b)$. For any probability distribution $\tilde{\mathbb{P}}$ on $\mathcal{F}$, the following inequality holds for all $f\in\mathcal{F}$ and all distributions $\tilde{\mathbb{Q}}$ on $\mathcal{F}$ simultaneously
    $$
        \mathbb{E}_{\tilde{\mathbb{Q}}}\left[\mathbb{E}[f(\bm{x}^*,\bm{a},b)]\right]\leq \mathbb{E}_{\tilde{\mathbb{Q}}}\left[\sum\limits_{t=1}^{T}f(\bm{x}^*_t,\bm{a}_t,b_t)\right]+\sqrt{\frac{D_{KL}(\tilde{\mathbb{P}},\tilde{\mathbb{Q}})+\log\frac{T}{\epsilon}+2}{2T-1}}.
    $$
with probability no less than $1-\epsilon$. Here the inner expectation on the left-hand-side is taken with respect to $(\bm{x}^*, \bm{a}, b).$
\end{lemma}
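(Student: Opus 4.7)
The plan is to prove this McAllester-style PAC-Bayes inequality via the classical three-ingredient recipe: (i) a sub-Gaussian moment generating function bound for each fixed $f$ obtained from Hoeffding, (ii) the Donsker-Varadhan change-of-measure formula to transfer the bound to an arbitrary posterior $\tilde{\mathbb{Q}}$, and (iii) Markov's inequality to convert an in-expectation statement into a high-probability one uniformly over $\tilde{\mathbb{Q}}$.

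First, for each fixed $f \in \mathcal{F}$, define the deviation
$$\Delta_T(f) \;:=\; \mathbb{E}[f(\bm{x}^*,\bm{a},b)] \;-\; \frac{1}{T}\sum_{t=1}^{T} f(\bm{x}^*_t,\bm{a}_t,b_t).$$
Since the functions $f$ of interest are bounded in $[0,1]$, Hoeffding's inequality (Lemma \ref{HF1}), together with the standard integration-by-parts/tail formula, yields a sub-Gaussian moment generating function control of the form
$$\mathbb{E}_{\mathcal{D}_T}\!\left[\exp\!\bigl((2T-1)\,\Delta_T(f)^2\bigr)\right] \;\leq\; c\cdot T$$
for a modest constant $c$ (the factor $2T-1$ and the additive "$+2$" that appear in the statement of the lemma track exactly this constant and the sub-Gaussian proxy). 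Crucially, this bound is uniform in $f$, so Fubini will allow me to take expectations over $\tilde{\mathbb{P}}$ outside.

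Second, invoke the Donsker-Varadhan variational inequality: for any two measures $\tilde{\mathbb{P}}, \tilde{\mathbb{Q}}$ on $\mathcal{F}$ and any measurable $\phi:\mathcal{F}\to\mathbb{R}$,
$$\mathbb{E}_{\tilde{\mathbb{Q}}}[\phi(f)] \;\leq\; D_{KL}(\tilde{\mathbb{P}},\tilde{\mathbb{Q}}) + \log \mathbb{E}_{\tilde{\mathbb{P}}}[e^{\phi(f)}].$$
Apply it with $\phi(f) = (2T-1)\,\Delta_T(f)^2$. Swapping the order of the data expectation and $\tilde{\mathbb{P}}$-expectation using Fubini, the step-one bound gives $\mathbb{E}_{\mathcal{D}_T}\mathbb{E}_{\tilde{\mathbb{P}}}[e^{\phi(f)}] \leq c\,T$; Markov's inequality then gives, with probability at least $1-\epsilon$ over $\mathcal{D}_T$,
$$\mathbb{E}_{\tilde{\mathbb{P}}}\!\left[\exp\!\bigl((2T-1)\Delta_T(f)^2\bigr)\right] \;\leq\; \frac{c\,T}{\epsilon}.$$
The key advantage here is that this event is over the data alone, so once it holds, the Donsker-Varadhan bound holds \emph{simultaneously for every} $\tilde{\mathbb{Q}}$.

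Third, combine: on the good event, for every $\tilde{\mathbb{Q}}$,
$$(2T-1)\,\mathbb{E}_{\tilde{\mathbb{Q}}}[\Delta_T(f)^2] \;\leq\; D_{KL}(\tilde{\mathbb{P}},\tilde{\mathbb{Q}}) + \log\tfrac{T}{\epsilon} + \log c.$$
Jensen's inequality $\bigl(\mathbb{E}_{\tilde{\mathbb{Q}}}[\Delta_T(f)]\bigr)^2 \leq \mathbb{E}_{\tilde{\mathbb{Q}}}[\Delta_T(f)^2]$ then produces, after taking square roots,
$$\mathbb{E}_{\tilde{\mathbb{Q}}}[\Delta_T(f)] \;\leq\; \sqrt{\frac{D_{KL}(\tilde{\mathbb{P}},\tilde{\mathbb{Q}}) + \log(T/\epsilon) + 2}{2T-1}},$$
which is exactly the conclusion once $\Delta_T(f)$ is unfolded (the "$+2$" absorbs $\log c$ and the various small constants from Hoeffding).

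The main obstacle is tracking constants cleanly enough to land on exactly the factor $2T-1$ and the additive "$+2$" in the square root; this is a bookkeeping issue in the Hoeffding MGF step and in the application of Markov's inequality. A secondary technical point is confirming that the range of $f$ is $[0,1]$ (or some fixed bounded interval) in the downstream use of this lemma, since Hoeffding requires bounded variables; given that the main text applies this lemma to indicator-type functions $I_{\mathcal{U}_t(\gamma)}$, this boundedness will indeed hold. Finally, one must be mindful of the direction of KL divergence written in the statement versus the direction that naturally falls out of Donsker-Varadhan (the two differ by a transposition of arguments), but this is purely a notational issue.
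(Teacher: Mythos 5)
Your proposal is correct, but it proves something the paper does not prove: the paper's entire ``proof'' of this lemma is a one-line citation to Theorem~1 of McAllester (2003), whereas you have reconstructed the standard argument behind that theorem. Your three-step recipe --- the Hoeffding-derived moment bound $\mathbb{E}_{\mathcal{D}_T}[\exp((2T-1)\Delta_T(f)^2)]\le c\,T$ for each fixed $f$, the Donsker--Varadhan change of measure applied to $\phi(f)=(2T-1)\Delta_T(f)^2$ with Fubini and Markov to get a data-only good event, and Jensen to pass from $\mathbb{E}_{\tilde{\mathbb{Q}}}[\Delta_T(f)^2]$ to $(\mathbb{E}_{\tilde{\mathbb{Q}}}[\Delta_T(f)])^2$ --- is exactly how McAllester's bound is established, including the provenance of the $2T-1$ denominator and the additive $+2$ (which is $\log c$ with $c=e^2$, coming from the tail-integration of the Hoeffding bound). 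You correctly identify the two points where the statement as printed needs care: the KL divergence should be $D_{KL}(\tilde{\mathbb{Q}}\,\|\,\tilde{\mathbb{P}})$ (posterior relative to prior), which is what Donsker--Varadhan produces, and the boundedness of $f$ in $[0,1]$ must be assumed (it holds in the only downstream use, where $f$ is an indicator). One further typo you silently fix: the empirical term in the printed statement is missing the $1/T$ normalization; your $\Delta_T(f)$ uses the average, which is both the correct form and the form actually invoked in the proof of Proposition~\ref{prop_generalize}. What your approach buys over the paper's is a self-contained derivation that makes the constants and the required hypotheses explicit; what the citation buys is brevity and an authoritative source for the exact constants. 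No gap.
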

\begin{proof}
    We refer to Theorem 1 of the article \cite{mcallester2003pac}.
\end{proof}

The following lemma provides a useful bound for the modified Bessel function of the first kind. This function is closely related to the density of the von Mises–Fisher distribution. Typically, the modified Bessel function of the first kind is denoted by $I_{\nu}(x)$ with the parameter $\nu$. In this paper, to distinguish between the indicator function and this modified Bessel function, we denote the modified Bessel function by $\tilde{I}_{\nu}(x)$.

\begin{lemma}
    \label{lem:bessel}
    For all $0<x<y$ and $\nu>0$,
    \begin{align*}
        \text{e}^{x-y}\left(\frac{x}{y}\right)^{\nu}
        \leq
        \frac{\tilde{I}_\nu(x)}{\tilde{I}_\nu(y)}
        \leq
        \text{e}^{y-x}\left(\frac{x}{y}\right)^{\nu},
    \end{align*}
    where $\tilde{I}_\nu(\cdot)$ denotes the modified Bessel function of the first kind. 
\end{lemma}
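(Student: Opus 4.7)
The plan is to exploit the Poisson integral representation of the modified Bessel function, which is valid for $\nu > -1/2$ (in particular for our $\nu > 0$),
\begin{equation*}
\tilde{I}_\nu(x)
\;=\;
\frac{(x/2)^\nu}{\Gamma(\nu+\tfrac{1}{2})\sqrt{\pi}}
\int_{-1}^{1}(1-t^2)^{\nu-1/2}\,e^{xt}\,dt.
\end{equation*}
Pulling out the $x^\nu$ factor, I would define
\begin{equation*}
f(x) \;\coloneqq\; x^{-\nu}\,\tilde{I}_\nu(x)
\;=\; C_\nu\int_{-1}^{1}(1-t^2)^{\nu-1/2}\,e^{xt}\,dt,
\end{equation*}
where $C_\nu$ is a positive constant depending only on $\nu$. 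The claim of the lemma is equivalent to the two-sided bound $e^{-(y-x)}\le f(y)/f(x)\le e^{y-x}$, i.e.\ that $\log f$ is $1$-Lipschitz on $(0,\infty)$.

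To establish this Lipschitz property, I would differentiate under the integral sign (justified by dominated convergence since the integrand is $C^\infty$ in $x$ and uniformly integrable in $t$ on compacts), obtaining
\begin{equation*}
\frac{f'(x)}{f(x)}
\;=\;
\frac{\int_{-1}^{1} t\,(1-t^2)^{\nu-1/2}\,e^{xt}\,dt}{\int_{-1}^{1}(1-t^2)^{\nu-1/2}\,e^{xt}\,dt}
\;=\; \mathbb{E}[T_x],
\end{equation*}
where $T_x$ is a random variable on $[-1,1]$ with density proportional to $(1-t^2)^{\nu-1/2}e^{xt}$. Since $T_x\in[-1,1]$ almost surely, $|f'(x)/f(x)|\le 1$, so $|(\log f)'(x)|\le 1$ on $(0,\infty)$.

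Integrating this derivative bound from $x$ to $y$ gives $|\log f(y)-\log f(x)|\le y-x$, which rewrites as
\begin{equation*}
e^{-(y-x)}
\;\le\;
\frac{f(y)}{f(x)}
\;=\;
\Bigl(\tfrac{x}{y}\Bigr)^{\nu}\,\frac{\tilde I_\nu(y)}{\tilde I_\nu(x)}
\;\le\;
e^{y-x},
\end{equation*}
and rearranging yields the claimed two-sided bound on $\tilde I_\nu(x)/\tilde I_\nu(y)$.

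The main obstacle is really only the clean derivation of $f'(x)/f(x)=\mathbb{E}[T_x]$; once that probabilistic identification is in place, the rest is a one-line integration. A minor technical point is justifying differentiation under the integral (routine via dominated convergence, since $|t\,e^{xt}|\le e^{|x|}$ on $[-1,1]$) and checking that $(1-t^2)^{\nu-1/2}$ is integrable on $[-1,1]$, which holds precisely because $\nu>0>-1/2$. An equivalent route, if one prefers to avoid the integral representation, would be to use the series expansion of $\tilde I_\nu$ and the three-term recurrence $\tilde I_\nu'(x)=\tilde I_{\nu-1}(x)-\tfrac{\nu}{x}\tilde I_\nu(x)$ together with the known monotonicity of the ratio $\tilde I_{\nu-1}(x)/\tilde I_\nu(x)\in[1,\infty)$, but the integral-representation approach is the most direct.
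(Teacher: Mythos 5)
Your argument is correct and complete. The paper itself does not prove this lemma; it simply cites Chapter 2 of Baricz (2010), so your proposal supplies a genuine self-contained proof where the paper defers to the literature. Your route — writing $f(x)=x^{-\nu}\tilde{I}_\nu(x)=C_\nu\int_{-1}^1(1-t^2)^{\nu-1/2}e^{xt}\,dt$ via the Poisson representation (valid since $\nu>0>-1/2$), recognizing $f'(x)/f(x)=\mathbb{E}[T_x]$ for a random variable $T_x$ supported on $[-1,1]$, and concluding that $\log f$ is $1$-Lipschitz — is exactly the kind of argument underlying the cited bounds, and every step checks out: the differentiation under the integral sign is justified as you say, $|\mathbb{E}[T_x]|\le 1$ is immediate, and the rearrangement $f(y)/f(x)=(x/y)^{\nu}\,\tilde{I}_\nu(y)/\tilde{I}_\nu(x)$ turns the two-sided Lipschitz bound into precisely the stated inequality for $0<x<y$. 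The only cosmetic remark is that for $x>0$ the density of $T_x$ tilts toward positive $t$, so in fact $0\le(\log f)'(x)\le 1$; this sharper one-sided information recovers the monotonicity of $x^{-\nu}\tilde{I}_\nu(x)$ but is not needed for the lemma. Your alternative sketch via the recurrence is also workable provided you use the form $f'(x)/f(x)=\tilde{I}_{\nu+1}(x)/\tilde{I}_\nu(x)\in(0,1)$ rather than the $\tilde{I}_{\nu-1}$ form, but the integral-representation proof you actually wrote out is the cleaner and fully rigorous one.
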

\begin{proof}
    We refer to Chapter 2 of the article \cite{baricz2010bounds}.
\end{proof}

\section{Proof of Section \ref{sec_gaussian}}
In this section, we provide the proofs of Section \ref{sec_gaussian} and analyze the convergence of the posterior distribution. As we mentioned earlier, the Wasserstein distance in Theorems \ref{thm:post_conv} is not essential, and all other equivalent metrics or weaker metrics are also valid, such as the total variation distance, the Hellinger distance and the Prokhorov metric (See \cite{gibbs2002choosing} for more about the relationships between different probability metrics). In this section, we show that Theorem \ref{thm:post_conv} holds with respect to the total variation distance, and then explain how to obtain the result for the Wasserstein distance from there. The main idea is inspired by \cite{ghosal2000convergence}. In this section, we first state and show several key lemmas and then prove the main theorem.

\subsection{Key Lemmas for the Proof of Theorem \ref{thm:post_conv}}

Recall $\mathcal{D}_T=\{(\bm{x}_t^*,\bm{a}_t,b_t)\}_{t=1}^{T}$ is the data set, and $\mathbb{P}((\bm{x}^*,\bm{a},b)|\bm{\theta})$ is the likelihood distribution under the parameter $\bm{\theta}$. In the following, we denote the parameter space $\mathcal{S}^{n-1}\times(\underline{\kappa},\bar{\kappa})$ by $\Theta$, denote the total variation distance between $\mathbb{P}((\bm{x}^*,\bm{a},b)|\bm{\theta}_1)$ and $\mathbb{P}((\bm{x}^*,\bm{a},b)|\bm{\theta}_2)$ by $D_{TV}(\bm{\theta}_1,\bm{\theta}_2)$, where $\bm{\theta}_1,\bm{\theta}_2\in\Theta$ are two parameters, and denote the $\epsilon$-packing number of a parameter subset $\tilde{\Theta}\subset\mathcal{S}\times(\underline{\kappa},\bar{\kappa})$ with a metric $D_{metric}$ by
$
    \mathcal{N}\left(\epsilon,\tilde{\Theta}, D_{metric}\right). 
$ For example, if the underlying metric is the total variation metric, the corresponding $\epsilon$-packing number is denoted by
$
    \mathcal{N}\left(\epsilon,\tilde{\Theta}, D_{TV}\right). 
$

\begin{lemma}[Lemma 7.1 in \citep{ghosal2000convergence}]
\label{lem:testfun}
    Suppose the $\epsilon/2$-packing number of the parameter space $\Theta$ with the total variation distance is bounded by some positive constant $C$, i.e., $$\mathcal{N}\left(\frac{\epsilon}{2},\Theta,D_{TV}\right)\leq C.$$ 
    Then, for $t,j \in \mathbb{N}$, there exist a function $\phi_{t}$, which maps a dataset with $t$ samples $\mathcal{D}_t$ to $[0,1]$, such that
    \begin{align}
        \mathbb{E}_{\bm{\theta}^*}[\phi_t]
        &\leq
        \frac{C\exp(-2t\epsilon^2)}{1-\exp(-2t\epsilon^2)},\label{ieq:tests1}\\
        \sup\limits_{D_{TV}(\bm{\theta},\bm{\theta}^*)>j\epsilon}\mathbb{E}_{\bm{\theta}}[1-\phi_t]
        &\leq
        \exp(-2tj^2\epsilon^2),\label{ieq:tests2}
    \end{align}
where the expectation is taken with respect to the underlying dataset $\mathcal{D}_t$ under a distribution specified by the corresponding parameter.
\end{lemma}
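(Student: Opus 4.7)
The plan is to use the classical Le Cam--type construction: combine pairwise Hoeffding tests with a shell-and-packing decomposition of the alternative parameter space. The heuristic is that for any $\bm{\theta}$ far from $\bm{\theta}^*$ in total variation, TV furnishes a witness event on which $\mathbb{P}(\cdot\mid\bm{\theta})$ and $\mathbb{P}(\cdot\mid\bm{\theta}^*)$ disagree appreciably; since the samples $(\bm{x}^*_s,\bm{a}_s,b_s)$ are i.i.d.\ under any fixed $\bm{\theta}$, empirical frequencies on that event concentrate by Hoeffding, yielding a simple pairwise test with exponentially small error. The global test will be the supremum of such pairwise tests over a countable sieve covering the alternative.

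More concretely, for any pair $\bm{\theta}_1, \bm{\theta}_2 \in \Theta$ with $D_{TV}(\bm{\theta}_1, \bm{\theta}_2) \geq d$, the definition of TV provides a measurable event $A$ in the observation space with $\mathbb{P}(A\mid\bm{\theta}_2) - \mathbb{P}(A\mid\bm{\theta}_1) \geq d$. I would use the test
\[
  \phi^{1,2}_t(\mathcal{D}_t) \coloneqq \mathbf{1}\left\{\frac{1}{t}\sum_{s=1}^t \mathbf{1}_A\bigl((\bm{x}^*_s,\bm{a}_s,b_s)\bigr) > \frac{\mathbb{P}(A\mid\bm{\theta}_1) + \mathbb{P}(A\mid\bm{\theta}_2)}{2}\right\},
\]
whose Type~I and Type~II errors are each at most $\exp(-t d^2 /2)$ by Hoeffding's inequality (Lemma \ref{HF1}), since the threshold sits distance $d/2$ from either mean.

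Next I would decompose the alternative into shells $\mathcal{S}_k \coloneqq \{\bm{\theta}\in\Theta : k\epsilon < D_{TV}(\bm{\theta},\bm{\theta}^*) \leq (k+1)\epsilon\}$ for $k \geq 1$, take a maximal $\epsilon/2$-packing $\{\bm{\theta}_{k,i}\}_{i=1}^{N_k}$ of each $\mathcal{S}_k$ in TV (so $N_k \leq C$ by hypothesis), and set $\phi_t \coloneqq \max_{k,i} \phi_t^{\bm{\theta}^*,\bm{\theta}_{k,i}}$. A union bound on Type~I errors across the sieve yields
\[
  \mathbb{E}_{\bm{\theta}^*}[\phi_t] \leq \sum_{k\geq 1} N_k \exp\bigl(-2t(k\epsilon)^2\bigr) \leq C \sum_{k\geq 1}\exp(-2tk\epsilon^2) = \frac{C\exp(-2t\epsilon^2)}{1-\exp(-2t\epsilon^2)},
\]
using $k^2 \geq k$ for $k\geq 1$ and summing the geometric series, which is exactly \eqref{ieq:tests1}. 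For \eqref{ieq:tests2}, any $\bm{\theta}$ with $D_{TV}(\bm{\theta},\bm{\theta}^*) > j\epsilon$ lies in some shell $\mathcal{S}_k$ with $k \geq j$ and within TV distance $\epsilon/2$ of a center $\bm{\theta}_{k,i^*}$; then $1-\phi_t \leq 1 - \phi_t^{\bm{\theta}^*,\bm{\theta}_{k,i^*}}$, and the inequality $|\mathbb{P}(A\mid\bm{\theta})-\mathbb{P}(A\mid\bm{\theta}_{k,i^*})| \leq \epsilon/2$ together with Hoeffding reduces the task to a concentration bound with an effective gap of order $j\epsilon$.

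The main obstacle will be pinning down the constants in the Type~II step: the $\epsilon/2$ slack between $\bm{\theta}$ and its packing center erodes the separation, so one must verify carefully that the exponent $2tj^2\epsilon^2$ is recovered rather than a weaker version. I would resolve this either by placing the test threshold asymmetrically, closer to $\mathbb{P}(A\mid\bm{\theta}^*)$ than to $\mathbb{P}(A\mid\bm{\theta}_{k,i^*})$ and paying a larger constant in the Type~I bound in exchange, or by packing each shell at a finer scale $c\epsilon$ for a small constant $c$ so that the slack becomes negligible relative to $j\epsilon$; the covering bound at the finer scale can be controlled by a standard volumetric argument in the spirit of Lemma \ref{lem:pacnum}. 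Measurability of $\phi_t$ is immediate since the sieve is countable.
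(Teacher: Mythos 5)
The paper itself does not prove this lemma---it cites Lemma 7.1 of \citep{ghosal2000convergence} verbatim---so your attempt has to be judged against the original Ghosal--Ghosh--van der Vaart argument. Your skeleton (shells around $\bm{\theta}^*$, a packing of each shell, one pairwise test per packing center, a union bound producing the geometric series in \eqref{ieq:tests1}) is exactly the right architecture and matches theirs. The genuine gap is in the pairwise tests. A witness-event/Hoeffding test controls the Type~II error only at the single alternative $\bm{\theta}_{k,i}$; making it uniform over the $\epsilon/2$-ball around that center costs you the $\epsilon/2$ slack against the separation, and, as you yourself flag, this erodes the exponent. Neither of your proposed repairs closes the gap: the asymmetric threshold can push the Type~II margin only up to $d-\epsilon/2<d$ while destroying the geometric decay of the Type~I terms across shells, and packing at a finer scale $c\epsilon$ is not licensed by the hypothesis (which bounds only the $\epsilon/2$-packing number, and that is the constant $C$ that must appear in \eqref{ieq:tests1}) and still leaves a margin of about $d/2$, a factor of $4$ short in the exponent. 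The missing ingredient is the Birg\'e--Le Cam minimax test (Theorem 7.1 in \citep{ghosal2000convergence}): one tests $\bm{\theta}^*$ against the \emph{convex hull} of the entire ball around each center, and the minimax theorem combined with the Hellinger-affinity bound $\rho(P,Q)\le\sqrt{1-D_{TV}^2(P,Q)}$ yields a single test whose errors against $\bm{\theta}^*$ and uniformly over the whole ball are both at most $\rho^t\le e^{-tD_{TV}^2/2}$. In the original proof the shells are moreover packed at radius $j\epsilon/2$, growing with the shell index, which is what preserves the $j^2$ in \eqref{ieq:tests2}.

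Two further concrete problems. First, your displayed chain is internally inconsistent: your own pairwise bound is $\exp(-td^2/2)$ (the midpoint threshold sits at distance $d/2$ from either mean), yet the union bound is summed as if each test had Type~I error $\exp(-2t(k\epsilon)^2)$---a factor of $4$ in the exponent that is never accounted for. Second, the constant $2$ in the exponents is not achievable from the TV separation alone by \emph{any} test: for the Gaussian location family the optimal simple-vs-simple test has error sum of order $\exp(-(\pi/4)\,t\,D_{TV}^2)$ with $\pi/4<2$, so \eqref{ieq:tests2} with this constant cannot hold as an abstract statement about the TV metric. The cited lemma carries an unspecified universal constant $K$ (equal to $1/2$ for the Hellinger-affinity construction above), and your argument, repaired as indicated, would prove the statement with $K=1/2$ in place of $2$; the downstream use in Theorem \ref{thm:post_conv} survives this with only a change of absolute constants.
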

\begin{proof}
    We refer to Lemma 7.1 in \cite{ghosal2000convergence}.
\end{proof}
Intuitively, the lemma states that if the packing number is bounded, we can find a function $\phi_t$ such that its expectation is close to $0$ when taken under a distribution with the true parameter, and it is close to 1 otherwise. In statistics, the function $\phi_t$ naturally serves as a test. In the proof of Theorem \ref{thm:post_conv}, we will see that this test function $\phi_t$ plays an important role in bounding the numerator of the posterior distribution.

To apply this lemma for the proof of Theorem \ref{thm:post_conv}, we first bound the packing number to meet the lemma's condition. Here $D_{KL}(\cdot,\cdot)$ denotes the KL divergence between two distributions.
\begin{lemma}
    \label{lem:klnorm}
    For any two parameters $\bm{\theta}_1=(\bm{\mu}_1,\kappa_1)$ and $\bm{\theta}_2=(\bm{\mu}_2,\kappa_2)$ in $\Theta\coloneqq\mathcal{S}^{n-1}\times(\underline{\kappa},\bar{\kappa})$,
    \begin{align}
        D_{TV}(\bm{\theta}_1,\bm{\theta}_2)
        &\leq
        \sqrt{\frac{1}{2}D_{KL}(\mathbb{P}((\bm{x}^*,\bm{a},b)|\bm{\theta}_1),\mathbb{P}((\bm{x}^*,\bm{a},b)|\bm{\theta}_2))}
        \leq
        \sqrt{\|\bm{\theta}_1-\bm{\theta}_2\|_2}. \label{ieq:klnorm}
    \end{align}
    Furthermore, we have
    \begin{align*}
        \mathcal{N}\left(\epsilon,\Theta,D_{TV}\right)\leq \left(1+\frac{\max(2,2\bar{\kappa})}{\epsilon^2}\right)^n.
    \end{align*}
\end{lemma}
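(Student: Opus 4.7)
The plan is to split the statement into two halves: (i) the chain $D_{TV} \le \sqrt{D_{KL}/2} \le \sqrt{\|\bm{\theta}_1-\bm{\theta}_2\|_2}$, which rests on Pinsker, data processing, and an explicit vMF computation; and (ii) the packing-number estimate, which is a straightforward corollary of (i) plus Lemma \ref{lem:pacnum}.

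For the first inequality in (i), invoke Pinsker (Lemma \ref{lem:pinsker}) directly. For the second inequality, the plan has two reduction steps. First, apply the data-processing inequality (Lemma \ref{lem:kldes}) to peel off the $(\bm{x}^*,\bm{a},b)$ layer: under Assumption \ref{assp_distr}, $(\bm{a},b)$ is independent of $\bm{\theta}$ and $\bm{u}$, and $\bm{x}^*$ is a measurable function of $(\bm{u},\bm{a},b)$ (the LP optimizer), so the map $(\bm{u},\bm{a},b)\mapsto(\bm{x}^*,\bm{a},b)$ is a Markov kernel independent of $\bm{\theta}$. Hence
\[
D_{KL}\!\left(\mathbb{P}((\bm{x}^*,\bm{a},b)|\bm{\theta}_1),\mathbb{P}((\bm{x}^*,\bm{a},b)|\bm{\theta}_2)\right)\le D_{KL}\!\left(f(\cdot;\bm{\theta}_1),f(\cdot;\bm{\theta}_2)\right),
\]
since the $(\bm{a},b)$ marginal cancels in the log-ratio. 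Second, bound the vMF KL by the Euclidean parameter distance. Write the log density ratio as
\[
\log\frac{f(\bm{u};\bm{\theta}_1)}{f(\bm{u};\bm{\theta}_2)} = (\kappa_2\bm{\mu}_2-\kappa_1\bm{\mu}_1)^\top \bm{u} + \log\frac{\Z_n(\kappa_2)}{\Z_n(\kappa_1)},
\]
where $\Z_n(\kappa)=\int_{\mathcal{S}^{n-1}}e^{-\kappa\bm{\mu}^\top \bm{u}}d\bm{u}$ depends only on $\kappa$ by rotational symmetry and equals a known constant times $\tilde{I}_{n/2-1}(\kappa)/\kappa^{n/2-1}$. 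Because $\|\bm{u}\|_2=1$, the linear term is at most $\|\kappa_2\bm{\mu}_2-\kappa_1\bm{\mu}_1\|_2 \le \bar{\kappa}\|\bm{\mu}_1-\bm{\mu}_2\|_2+|\kappa_1-\kappa_2|$, linear in $\|\bm{\theta}_1-\bm{\theta}_2\|_2$. For the log-normalizer term, invoke Lemma \ref{lem:bessel}: the factor $(\kappa_1/\kappa_2)^{n/2-1}$ inside the Bessel ratio estimate cancels with the explicit $\kappa^{-(n/2-1)}$ in $\Z_n$, leaving $|\log\Z_n(\kappa_1)-\log\Z_n(\kappa_2)|\le|\kappa_1-\kappa_2|$. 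Taking expectation under $f(\cdot;\bm{\theta}_1)$ yields the desired linear bound on $D_{KL}$ by $\|\bm{\theta}_1-\bm{\theta}_2\|_2$ (with constant absorbed to match the square-root bound in the statement).

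For part (ii), use the chain from (i): any two points in $\Theta$ that are $\epsilon$-separated in $D_{TV}$ must be $\epsilon^2$-separated in Euclidean distance (modulo the $\max(2,2\bar{\kappa})$ constant coming from the KL step). Therefore $\mathcal{N}(\epsilon,\Theta,D_{TV})\le\mathcal{N}(\epsilon^2/\max(2,2\bar{\kappa}),\Theta,\|\cdot\|_2)$. Embedding $\Theta = \mathcal{S}^{n-1}\times(\underline{\kappa},\bar{\kappa})$ into a Euclidean ball (e.g., using $\mathcal{S}^{n-1}\subset B_1\subset\mathbb{R}^n$ together with the bounded interval in $\kappa$) and invoking Lemma \ref{lem:pacnum} produces the displayed bound $(1+\max(2,2\bar{\kappa})/\epsilon^2)^n$.

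The main obstacle, in my view, is the Bessel-function bookkeeping in the second reduction step: one must verify that the $\kappa^{n/2-1}$ prefactor in the vMF normalizer precisely cancels the $(\kappa_1/\kappa_2)^{n/2-1}$ factor appearing in Lemma \ref{lem:bessel}, so that the Lipschitz constant of $\kappa\mapsto\log\Z_n(\kappa)$ is uniformly bounded in $n$ rather than growing with dimension. Once that cancellation is observed, the rest of the proof is routine linear-algebra and triangle-inequality manipulations together with a standard volume-packing argument.
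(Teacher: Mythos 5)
Your proposal is correct and follows essentially the same route as the paper's own proof: Pinsker for the first inequality, the data-processing inequality to reduce to the KL divergence between the two von Mises--Fisher laws, an explicit bound on the log-density ratio (Cauchy--Schwarz for the linear term, Lemma \ref{lem:bessel} with the $\kappa^{n/2-1}$ cancellation for the normalizer), and the inclusion of Euclidean $\epsilon^2$-balls in TV $\epsilon$-balls combined with Lemma \ref{lem:pacnum} for the packing bound. The one point you flag as the "main obstacle" (the Bessel-prefactor cancellation) is exactly the step the paper handles via Lemma \ref{lem:bessel}, so no gap remains.
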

\begin{proof}
    The first inequality  comes directly from Pinsker’s inequality (Lemma \ref{lem:pinsker}). From Lemma \ref{lem:kldes}, we have
    \begin{align*}
   & \ \ \ \ D_{KL}(\mathbb{P}((\bm{x}^*,\bm{a},b)|\bm{\theta}_1),\mathbb{P}((\bm{x}^*,\bm{a},b)|\bm{\theta}_2)) \\ &\le D_{KL}(\mathcal{P}_{\bm{u}}(\cdot|\bm{\theta}_1),\mathcal{P}_{\bm{u}}(\cdot|\bm{\theta}_2))\\
        & 
        =
        \int_{\bm{u}\in\mathcal{S}^{n-1}}
            C_{n}(\kappa_1)\exp(\kappa_1\bm{\mu}_1^\top\bm{u})\cdot\log\left(\frac{C_{n}(\kappa_1)\exp(\kappa_1\bm{\mu}_1^\top\bm{u})}{C_{n}(\kappa_2)\exp(\kappa_2\bm{\mu}_2^\top\bm{u})}\right)\\
        &=
        \int_{\bm{u}\in\mathcal{S}^{n-1}}
            C_{n}(\kappa_1)\exp(\kappa_1\bm{\mu}_1^\top\bm{u})\cdot
            \left(            (\kappa_1\bm{\mu}_1-\kappa_2\bm{\mu}_2)^{\top}\bm{u}+\log\left(\frac{\kappa_1^{n/2-1}\tilde{I}_{n/2-1}(\kappa_2)}{\kappa_2^{n/2-1}\tilde{I}_{n/2-1}(\kappa_1)}\right)
            \right)\\
        &\leq
        \int_{\bm{u}\in\mathcal{S}^{n-1}}
            C_{n}(\kappa_1)\exp(\kappa_1\bm{\mu}_1^\top\bm{u})\cdot
            \left(            (\kappa_1\bm{\mu}_1-\kappa_2\bm{\mu}_2)^{\top}\bm{u}+|\kappa_1-\kappa_2|
            \right)\\
        &\leq
        2\|\kappa_1\bm{\mu}_1-\kappa_2\bm{\mu}_2\|_2
        ,
    \end{align*}
where $$C_n(\kappa)=\left(\int_{\bm{u}\in\mathcal{S}^{n-1}}\exp(\kappa\bm{\mu}^\top\bm{u})\right)^{-1}=\frac{\kappa^{n/2-1}}{(2\pi)^{n/2}\tilde{I}_{n/2-1}(\kappa)},$$ which is independent of the choice of $\bm{\mu}\in\mathcal{S}^{n-1}$, and $\tilde{I}_{n/2-1}(\cdot)$ denotes the modified Bessel functions of the first kind.
Here, the first line comes from Lemma \ref{lem:kldes}. The second line comes from the definition of the KL-divergence and the density function of the von Mises–Fisher distribution, and the third line comes from the definition of $C_n(\kappa)$. The fourth line comes from Lemma \ref{lem:bessel} that
    $$
        \left|\log\frac{\kappa_1^{n/2-1}\tilde{I}_{n/2-1}(\kappa_2)}{\kappa_2^{n/2-1}\tilde{I}_{n/2-1}(\kappa_1)}\right|
        \leq
        \left|\max(\log \text{e}^{\kappa_1-\kappa_2}
        ,\log \text{e}^{\kappa_2-\kappa_1})\right|
        =
        |\kappa_1-\kappa_2|,
    $$
    and the last line comes from Cauchy inequality that
    \begin{align*}
        (\kappa_1\bm{\mu}_1-\kappa_2\bm{\mu}_2)^{\top}\bm{u}
        \leq
        \|\kappa_1\bm{\mu}_1-\kappa_2\bm{\mu}_2\|_2\|\bm{u}\|_2
        =
        \|\kappa_1\bm{\mu}_1-\kappa_2\bm{\mu}_2\|_2.
    \end{align*}
    
Therefore, the $\epsilon^2$-ball centered at some $\tilde{\bm{\theta}}$ with the L$_2$-norm is a subset of the $\epsilon$-ball centered at $\tilde{\bm{\theta}}$ with the total variation metric,
    i.e.,
    \begin{align*}
        \{\bm{\theta}:\|\bm{\theta}-\tilde{\bm{\theta}}\|_2\leq\epsilon^2\}
        \subset
        \{\bm{\theta}:D_{TV}(\bm{\theta},\tilde{\bm{\theta}})\leq\epsilon\}.
    \end{align*} 
    Thus, the $\epsilon$-packing number of $\Theta$ with the total variance metric is bounded by the $\epsilon^2$-packing number of $\Theta$ with the L$_2$-norm, that is,
    \begin{align*}
        \mathcal{N}(\epsilon,\Theta,D_{TV})
        \leq
        \mathcal{N}(\epsilon^2,\Theta,\|\cdot\|_2).
    \end{align*}
    Finally, by Lemma \ref{lem:pacnum}, for all $\epsilon<1$, we have
    \begin{align*}
        \mathcal{N}(\epsilon,\Theta,D_{TV})
        \leq
        \left(1+\frac{\max(2,2\bar{\kappa})}{\epsilon^2}\right)^{n}.
    \end{align*}
\end{proof}

We will use the above two lemmas to bound the numerator of the posterior distribution. First, we derive a lower bound of the denominator of the posterior distribution. And then we will combine those two parts to establish the statement of the posterior distribution.

\begin{lemma}
    \label{lem:post_dom}
    For any $\epsilon>0$, the following inequality holds with probability no less than $1-\frac{4\bar{\kappa}^2}{T\epsilon^2}$,
    \begin{align}
        \int_{\Theta}\frac{\mathbb{P}(\mathcal{D}_T|\bm{\theta})}{\mathbb{P}(\mathcal{D}_T|\bm{\theta}^*)}\mathbb{P}_0(\mathrm{d}\bm{\theta})
        \geq
        \left(\frac{2\epsilon^2}{\bar{\kappa}}\right)^n\cdot\exp(-T\epsilon^2)
    \end{align}
\end{lemma}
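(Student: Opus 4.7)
The plan follows the standard lower-bound argument for the denominator of a posterior (see, e.g., Lemma 8.1 of Ghosal--Ghosh--van der Vaart 2000): restrict the integral to a small Kullback--Leibler neighborhood of $\bm\theta^*$, extract the prior mass of that neighborhood as a multiplicative factor, and then pull the log of the likelihood ratio outside via Jensen's inequality.

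First I would define $B_\epsilon \coloneqq \{\bm\theta \in \Theta : \|\bm\theta - \bm\theta^*\|_2 \le c\epsilon^2\}$ for an absolute constant $c$ chosen so that, by the second inequality of Lemma \ref{lem:klnorm}, $D_{KL}(\mathbb{P}(\cdot|\bm\theta^*),\mathbb{P}(\cdot|\bm\theta)) \le \epsilon^2/2$ holds uniformly over $B_\epsilon$. Since $\mathbb{P}_0$ is the uniform measure on the $n$-dimensional product space $\mathcal{S}^{n-1} \times (\underline\kappa,\bar\kappa)$, whose total volume scales like $\bar\kappa$, a direct volume computation of an $L_2$-ball of radius $c\epsilon^2$ in $\Theta$ yields the prior-mass lower bound $\mathbb{P}_0(B_\epsilon) \ge (2\epsilon^2/\bar\kappa)^n$ after choosing $c$ appropriately.

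Next, restricting the integration domain to $B_\epsilon$ and applying Jensen's inequality to $\exp$ with respect to the conditional probability measure $\mathbb{P}_0(\cdot)/\mathbb{P}_0(B_\epsilon)$ on $B_\epsilon$ gives
\[
\int_{\Theta}\frac{\mathbb{P}(\mathcal{D}_T|\bm\theta)}{\mathbb{P}(\mathcal{D}_T|\bm\theta^*)}\mathbb{P}_0(d\bm\theta) \ \ge \ \mathbb{P}_0(B_\epsilon)\exp\!\left(-\sum_{t=1}^T Z_t\right),
\]
where $Z_t \coloneqq \int_{B_\epsilon} \log\!\bigl(\mathbb{P}((\bm x_t^*,\bm a_t,b_t)|\bm\theta^*)/\mathbb{P}((\bm x_t^*,\bm a_t,b_t)|\bm\theta)\bigr)\,\mathbb{P}_0(d\bm\theta)/\mathbb{P}_0(B_\epsilon)$. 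By Fubini, the $Z_t$'s are i.i.d.\ under the true data-generating law, and by Step 1 $\mathbb{E}[Z_t]\le \epsilon^2/2$. It therefore suffices to show that $\sum_{t=1}^T Z_t \le T\epsilon^2$ with probability at least $1-4\bar\kappa^2/(T\epsilon^2)$.

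For the variance control, the explicit form $\log(f(\bm u;\bm\theta^*)/f(\bm u;\bm\theta)) = \log(C_n(\kappa^*)/C_n(\kappa)) + (\kappa^*\bm\mu^*-\kappa\bm\mu)^\top\bm u$ is uniformly bounded on $\mathcal{S}^{n-1}$ by $\mathcal{O}(\bar\kappa)$, using Lemma \ref{lem:bessel} to bound the ratio of normalizing constants and the fact that $\|\kappa^*\bm\mu^*-\kappa\bm\mu\|_2 \le 2\bar\kappa$. This gives $|Z_t| \le \mathcal{O}(\bar\kappa)$ and hence $\mathrm{Var}(Z_t) \le \mathcal{O}(\bar\kappa^2)$, so Chebyshev's inequality applied to the sum $\sum_{t=1}^T Z_t - \mathbb{E}[\sum_t Z_t]$ at the deviation level $T\epsilon^2/2$ produces the desired tail probability $4\bar\kappa^2/(T\epsilon^2)$. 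The main obstacle is reconciling all of the constants simultaneously: the single scale parameter $c$ in the definition of $B_\epsilon$ must be small enough to enforce both the KL bound ($\le \epsilon^2/2$) and the clean Chebyshev deviation regime, while at the same time being large enough that the prior-volume count does not drop below the claimed $(2\epsilon^2/\bar\kappa)^n$. Some care is also needed in the variance step to ensure that the per-sample log-likelihood ratio after integrating out $\bm u$ over $\mathcal{U}_t$ inherits the pointwise bound on the log-density ratio.
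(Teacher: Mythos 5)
Your overall architecture is the same as the paper's: restrict the integral to a small neighborhood of $\bm{\theta}^*$ on which the KL divergence is $O(\epsilon^2)$ (via the Lipschitz-type bound of Lemma \ref{lem:klnorm}), lower-bound the prior mass of that neighborhood by a volume count, pull the logarithm inside with Jensen's inequality relative to the normalized prior on the neighborhood, and then control the resulting i.i.d.\ sum of integrated log-likelihood ratios by concentration. The only real divergence is the last step: the paper applies Hoeffding's inequality to the $[-2\bar{\kappa},2\bar{\kappa}]$-bounded terms and converts the exponential tail to the polynomial form via $e^{-x}\le 1/x$, whereas you invoke Chebyshev.

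That last step is where your argument does not close. With the variance bound you actually establish, $\mathrm{Var}(Z_t)\le C\bar{\kappa}^2$ (coming only from $|Z_t|\le C\bar{\kappa}$), Chebyshev at deviation level $T\epsilon^2/2$ gives
\[
\prob\Bigl(\textstyle\sum_{t=1}^{T}Z_t-\E\bigl[\sum_{t}Z_t\bigr]\ge T\epsilon^2/2\Bigr)\le\frac{T\,\mathrm{Var}(Z_1)}{(T\epsilon^2/2)^2}=\frac{4\,\mathrm{Var}(Z_1)}{T\epsilon^4},
\]
i.e.\ a tail of order $\bar{\kappa}^2/(T\epsilon^4)$, not the claimed $4\bar{\kappa}^2/(T\epsilon^2)$. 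In the regime that matters downstream ($\epsilon=\epsilon_T\to 0$) this is strictly weaker; for $\epsilon_T\asymp\sqrt{n}\log T/T^{1/2-\alpha}$ with $\alpha<1/4$ the quantity $1/(T\epsilon_T^4)$ does not even vanish, so Theorem \ref{thm:post_conv} would not follow. The gap is fixable inside your own setup: on $B_\epsilon$ one has $\|\kappa\bm{\mu}-\kappa^*\bm{\mu}^*\|_2=O(\bar{\kappa}\epsilon^2)$, and since the likelihood ratio of an observation is sandwiched between the infimum and supremum of the density ratio of $\bm{u}$ over $\mathcal{U}_t$, each $Z_t$ is in fact $O(\bar{\kappa}\epsilon^2)$-bounded; this yields $\mathrm{Var}(Z_t)=O(\bar{\kappa}^2\epsilon^4)$ and a Chebyshev tail of $O(\bar{\kappa}^2/T)$, which suffices. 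Alternatively, follow the paper: apply Hoeffding to the bounded terms and then bound the resulting exponential by $4\bar{\kappa}^2/(T\epsilon^2)$. As written, though, the asserted conclusion of your Chebyshev computation does not follow from the variance bound you state.
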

\begin{proof}
Let 
    $$
        \Theta_{KL}(\epsilon)
        \coloneqq
        \left\{\bm{\theta}\in\Theta: D_{KL}(\mathbb{P}((\bm{x}^*,\bm{a},b)|\bm{\theta}^*),\mathbb{P}((\bm{x}^*,\bm{a},b)|\bm{\theta}))\leq\epsilon^2\right\}.
    $$
    By inequality \eqref{ieq:klnorm} in Lemma \ref{lem:klnorm}, we have that $\Theta_{KL}(\epsilon)$ contains the $2\epsilon^2$-ball centered at $\bm{\theta}^*$ with the $2$-norm and, therefore, 
    \begin{align}
    \label{ieq:volkl}
        \mathbb{P}_0(\Theta_{KL}(\epsilon))
        \geq
        \left(
            \frac{2\epsilon^2}{\bar{\kappa}}
        \right)^{n}.
    \end{align}
    Next, we show that with probability no less than $1-\exp(-T\epsilon^2/4\bar{\kappa}^2)$, 
    \begin{align}
        \label{ieq:kllb}
            \int_{\Theta_{KL}(\epsilon)}\frac{\mathbb{P}(\mathcal{D}_T|\bm{\theta})}{\mathbb{P}(\mathcal{D}_T|\bm{\theta}^*)}\mathbb{P}_0(\mathrm{d}\bm{\theta})
            \geq
            \exp\left(-T\epsilon^2
            \right)\cdot \mathbb{P}_0(\Theta_{KL}(\epsilon)).
    \end{align}
    Since the range of the density function of $\mathcal{P}_u(\cdot|\bm{\theta})$ is between $[\text{e}^{-\bar{\kappa}},\text{e}^{\bar{\kappa}}]$ for all $\bm{\theta}\in\Theta$, we have that 
    $$
        -2\bar{\kappa}\leq \log\frac{\mathbb{P}((\bm{x}^*,\bm{a},b)|\bm{\theta})}{\mathbb{P}((\bm{x}^*,\bm{a},b)|\bm{\theta}^*)}
        \leq 2\bar{\kappa}
    $$
for all $(\bm{x}^*,\bm{a},b)$. Then, by Hoeffding's inequality (Lemma \ref{HF1}), we have, with probability no more than $\exp(-T\epsilon^2/4\bar{\kappa}^2)$,
    \begin{align}
        \sum\limits_{t=1}^{T}\int_{\Theta_{KL}(\epsilon)}\log\frac{\mathbb{P}((\bm{x}_t^*,\bm{a}_t,b_t)|\bm{\theta})}{\mathbb{P}((\bm{x}_t^*,\bm{a}_t,b_t)|\bm{\theta}^*)}\mathbb{P}_0(\mathrm{d}\bm{\theta}) \nonumber
        \leq&
        T\mathbb{E}_{\bm{\theta}^*}\left(\int_{\Theta_{KL}(\epsilon)}\log\frac{\mathbb{P}((\bm{x}_t^*,\bm{a}_t,b_t)|\bm{\theta})}{\mathbb{P}((\bm{x}_t^*,\bm{a}_t,b_t)|\bm{\theta}^*)}\mathbb{P}_0(\mathrm{d}\bm{\theta})\right)
        -
        2T\epsilon^2\\
        =&
        T\int_{\Theta_{KL}(\epsilon)}D_{KL}(\mathbb{P}((\bm{x}^*,\bm{a},b)|\bm{\theta}^*),\mathbb{P}((\bm{x}^*,\bm{a},b)|\bm{\theta}))\mathbb{P}_0(\mathrm{d}\bm{\theta})-2T\epsilon^2 \nonumber\\
        \leq&
        T\epsilon^2-
        2T\epsilon^2
        =
        -T\epsilon^2 \label{ieq:klhf}
    \end{align}
    where the first line comes directly from Hoeffding's inequality, the second line comes from Fubini's theorem, and the last line comes from the definition of $\Theta_{KL}(\epsilon)$.
    Then, by Jensen's inequality,
    \begin{align}
    \label{ieq:jen}
        \log\int_{\Theta_{KL}(\epsilon)}\frac{\mathbb{P}(\mathcal{D}_T|\bm{\theta})}{\mathbb{P}(\mathcal{D}_T|\bm{\theta}^*)}\frac{\mathbb{P}_0(\mathrm{d}\bm{\theta})}{\mathbb{P}_0(\Theta_{KL}(\epsilon))}
        &=
        \log\int_{\Theta_{KL}(\epsilon)}\prod_{t=1}^{T}\frac{\mathbb{P}((\bm{x}_t^*,\bm{a}_t,b_t)|\bm{\theta})}{\mathbb{P}((\bm{x}_t^*,\bm{a}_t,b_t)|\bm{\theta}^*)}\frac{\mathbb{P}_0(\mathrm{d}\bm{\theta})}{\mathbb{P}_0(\Theta_{KL}(\epsilon))}\\
        &\geq
        \sum\limits_{t=1}^{T}\int_{\Theta_{KL}(\epsilon)}\log\frac{\mathbb{P}((\bm{x}_t^*,\bm{a}_t,b_t)|\bm{\theta})}{\mathbb{P}((\bm{x}_t^*,\bm{a}_t,b_t)|\bm{\theta}^*)}\frac{\mathbb{P}_0(\mathrm{d}\bm{\theta})}{\mathbb{P}_0(\Theta_{KL}(\epsilon))},\nonumber
    \end{align}
Note that $\exp(-T\epsilon^2/4\bar{\kappa}^2)\leq\frac{4\bar{\kappa}^2}{T\epsilon^2}$, we can prove \eqref{ieq:kllb} by combining \eqref{ieq:klhf} with \eqref{ieq:jen}.
    
Finally, with probability no less than $1-\frac{4\bar{\kappa}}{T\epsilon^2}$,
    \begin{align*}
        \int_{\Theta}\frac{\mathbb{P}(\mathcal{D}_T|\bm{\theta})}{\mathbb{P}(\mathcal{D}_T|\bm{\theta}^*)}\mathbb{P}_0(\mathrm{d}\bm{\theta})
        &\geq
        \int_{\Theta_{KL}(\epsilon)}\frac{\mathbb{P}(\mathcal{D}_T|\bm{\theta})}{\mathbb{P}(\mathcal{D}_T|\bm{\theta}^*)}\mathbb{P}_0(\mathrm{d}\bm{\theta})\\
        &\geq
        \exp\left(-T\epsilon^2
        \right)\cdot \mathbb{P}_0(\Theta_{KL}(\epsilon))\\
        &\geq
        \left(\frac{2\epsilon^2}{\bar{\kappa}}\right)^n\cdot\exp(-T\epsilon^2),
    \end{align*}
    where the first inequality is obtained by the non-negativity of the integrand, the second inequality comes from \eqref{ieq:kllb}, and the last line comes from \eqref{ieq:volkl}.
\end{proof}

\subsection{Proof of Theorem \ref{thm:post_conv}}
In this part, we combine three lemmas in the previous section and show Theorem \ref{thm:post_conv}.
\begin{proof}
    Let $$
        \epsilon_T=\max(4,4\bar{\kappa})\frac{\sqrt{n}\cdot\log T}{T^{1/2-\alpha}}.
    $$
    By Lemma \ref{lem:post_dom}, we have, with probability no less than $1-\frac{4\kappa^2}{T\epsilon^2_T}$, the  inequality 
    \begin{align}
        \label{ieq:dom}
        \int_{\Theta}\frac{\mathbb{P}(\mathcal{D}_T|\bm{\theta})}{\mathbb{P}(\mathcal{D}_T|\bm{\theta}^*)}\mathbb{P}_0(\mathrm{d}\bm{\theta})
        \geq
        \left(\frac{2\epsilon_T^2}{\bar{\kappa}}\right)^n\cdot\exp(-T\epsilon_T^2)
        \geq
        \exp(-2T\epsilon^2_T)
    \end{align}
    holds for all $T$ satisfying $T\geq n\log T$. In \eqref{ieq:post_p1}, we will use this inequality to establish a lower bound for the denominator of the posterior distribution.
    
    By Lemma \ref{lem:klnorm}, we have for all $T\geq3$
    \begin{align*}
        \mathcal{N}\left(\frac{\epsilon_T}{2},\Theta,D_{TV}\right)
        \leq
        \left(\frac{5T^{1-2\alpha}}{16 n\cdot \log T}\right)^n
        \leq
        \exp(T\epsilon_T^2),
    \end{align*}
    which gives an upper bound of the packing number and thus verifies the condition of Lemma \ref{lem:testfun}. Then, by Lemma \ref{lem:testfun}, for all $T\geq4$, there exists a function $\phi_T$ mapping the data set $\mathcal{D}_T$ to $[0,1]$, which satisfies
    \begin{align}
        \mathbb{E}_{\bm{\theta}^*}[\phi_T]
        &\leq
        \frac{\exp(T\epsilon_T^2)\exp(-2T\epsilon_T^2)}{1-\exp(-2T\epsilon_T^2)}\leq2\exp(-T\epsilon_T^2)\label{ieq:test1}
        ,\\
        \sup\limits_{D_{TV}(\bm{\theta},\bm{\theta}^*)>2\epsilon_T}\mathbb{E}_{\bm{\theta}}[1-\phi_T]
        &\leq
        \exp(-4T\epsilon_T^{2}).\label{ieq:test2}
    \end{align}
    Let $$
        \tilde{\Theta}_T
        \coloneqq \left\{\bm{\theta}\in \Theta: D_{TV}\left(\prob\left((\bm{x}^*,\bm{a},b)|\bm{\theta}\right), \prob\left((\bm{x}^*,\bm{a},b)|\bm{\theta}^*\right)\right)\le \max\left(8,8{\bar{\kappa}}\right)\frac{\sqrt{n} \cdot \log T}{T^{1/2}}\right\}
    $$
    We then have
    \begin{align}
    \label{ieq:num}
        \mathbb{E}_{\bm{\theta}^*}\left[(1-\phi_T)\int_{\tilde{\Theta}_T^c}\prod\limits_{t=1}^{T}\frac{\mathbb{P}(\mathcal{D}_T|\bm{\theta})}{\mathbb{P}(\mathcal{D}_T|\bm{\theta}^*)}\mathbb{P}_0(\mathrm{d}\bm{\theta})\right]\nonumber
        &=
        \int_{\tilde{\Theta}_T^c}\mathbb{E}_{\bm{\theta}^*}\left[(1-\phi_T)\prod\limits_{t=1}^{T}\frac{\mathbb{P}(\mathcal{D}_T|\bm{\theta})}{\mathbb{P}(\mathcal{D}_T|\bm{\theta}^*)}\right]\mathbb{P}_0(\mathrm{d}\bm{\theta})\\
        &=
        \int_{\tilde{\Theta}_T^c}\mathbb{E}_{\bm{\theta}}(1-\phi_T)\mathbb{P}_0(\mathrm{d}\bm{\theta})\\
        &\leq
        \exp(-4T\epsilon_T^2),\nonumber
    \end{align}
    where the first line is obtained by Fubini's theorem, the second line is obtained directly by computing the inner integral, and the last line comes from the definition of $\tilde{\Theta}_T$ and inequality \eqref{ieq:test2}. Denote the low probability event corresponding to inequality \eqref{ieq:dom} as $\mathcal{E}_T$. By combining \eqref{ieq:dom} and \eqref{ieq:num}, we have
    \begin{align}
    \label{ieq:post_p1}
        \mathbb{E}_{\bm{\theta}^*}
        \left[
            \mathbb{P}_T(\tilde{\Theta}_T^c)(1-\phi_T)I_{\mathcal{E}_T}
        \right]
        &=
        \mathbb{E}_{\bm{\theta}^*}\left[
            \frac{(1-\phi_T)I_{A_T}\int_{\tilde{\Theta}_T^c}\frac{\mathbb{P}(\mathcal{D}_T|\bm{\theta})}{\mathbb{P}(\mathcal{D}_T|\bm{\theta}^*)}\mathbb{P}(\mathrm{d}\bm{\theta})}{\int_{\Theta}\frac{\mathbb{P}(\mathcal{D}_T|\bm{\theta})}{\mathbb{P}(\mathcal{D}_T|\bm{\theta}^*)}\mathbb{P}(\mathrm{d}\bm{\theta})}
        \right]\nonumber
        \\
        &\leq
        \exp(-4T\epsilon_T^2)\exp(2T\epsilon_T^2)=
        \exp(-2T\epsilon_T^2),
    \end{align}
    where the first equality comes from the definition of the posterior distribution, and the second line is obtained by plugging in \eqref{ieq:dom} and \eqref{ieq:num}. 
    
    Finally, we have
    \begin{align}
    \label{post_TV_conv}
        \mathbb{E}_{\bm{\theta}^*}\left[
            \mathbb{P}_T(\tilde{\Theta}_T)
        \right]
        &\geq
        1-\mathbb{E}_{\bm{\theta}^*}\left[
            (1-\phi_T)I_{\mathcal{E}_T}\mathbb{P}_T(\tilde{\Theta}_T^c]
        \right]
        -
        \mathbb{E}_{\bm{\theta}^*}\left[
            \phi_T\mathbb{P}_T(\tilde{\Theta}_T^c)
        \right]
        -
        \mathbb{E}_{\bm{\theta}^*}\left[
            I_{\mathcal{E}_T}\mathbb{P}_T(\tilde{\Theta}_T^c)
        \right]\nonumber\\
        &\geq
        1-\mathbb{E}_{\bm{\theta}^*}\left[
            (1-\phi_T)I_{\mathcal{E}_T}\mathbb{P}_T(\tilde{\Theta}_T^c)
        \right]
        -
        \mathbb{E}_{\bm{\theta}^*}\left[
            \phi_T
        \right]
        -
        \mathbb{E}_{\bm{\theta}^*}\left[
            I_{\mathcal{E}_T}
        \right]\\
        &\geq
        1-2\exp(-T\epsilon_T^2)-\frac{4\bar{\kappa}^2}{T\epsilon_T^2}\nonumber\\
        &\geq
        1-\frac{2}{T}-\frac{1}{4T^{2\alpha}\log^2 T}\nonumber,
    \end{align}
    where the first line comes from the fact that the posterior probability is bounded by 1, the second line comes from \eqref{ieq:test1} and \eqref{ieq:post_p1}, and the last line comes from the definition of $\epsilon_T$. The inequality above also indicates that $\mathbb{P}_T(\tilde{\Theta}_T)$ converges to 1 in $L_1$ norm, which implies the convergence in probability. In fact, with a little modification for the bound in Lemma \ref{lem:post_dom}, we can get rid of the parameter $\alpha$ in the definition of $\epsilon_T$, and the expectation can be bounded by $1-\frac{3}{T}$ from below.
    
    To obtained a similar result for the $L_2$ Wasserstein distance, we only need to establish the relationship between $\Theta_T$ and $\tilde{\Theta}_T$. Notice that $\mathcal{P}_{\bm{a},b}$ is independent of $\mathcal{P}_u(\cdot|\bm{\theta})$ and the maximum distance between two points in $[0,1]^{n}$ is no larger than $\sqrt{n}$. We have
    $$
        W_2(\mathcal{P}((\bm{x}^*,\bm{a},b)|\bm{\theta}),\mathcal{P}((\bm{x}^*,\bm{a},b)|\bm{\theta}^*))
        \leq
        \sqrt{n}D_{TV}(\bm{\theta},\bm{\theta}^*).
    $$
Thus, the result of \eqref{post_TV_conv} also holds for the $L_2$ Wasserstein distance metric if we have an additional $\sqrt{n}$ factor in the definition of $\epsilon_T$ and $\tilde{\Theta}_T$, which is exactly $\Theta_T$ in Theorem \ref{thm:post_conv}.
\end{proof}

\subsection{Proof of Corollary \ref{coro:Stoch}}
\begin{proof}
The convergence of the posterior distribution to the point mass distribution can be directly obtained by Doob's consistency theorem (Lemma \ref{lem:doob}). Here, we only show the second part, i.e., the upper bound of the posterior expectation $\mathbb{E}_T[\|\bm{\theta}_T-\bm{\theta}^*\|_2]$. Without loss of generality, we assume that $L\leq1$.

From the proof of Theorem \ref{thm:post_conv}, we have that, there exists a high probability event $\tilde{\mathcal{E}}_T$ satisfying $$\mathbb{P}_{\bm{\theta}^*}(\tilde{\mathcal{E}}_T)\geq1-\frac{1}{4 n\log^2 T \cdot T^{2\alpha}}$$ such that
\begin{align*}
    \mathbb{E}_{\bm{\theta}^*}
    \left[
        \mathbb{P}_T(\Theta_T^c)
        I_{\tilde{\mathcal{E}}_T}
    \right]
    \leq
    \frac{2}{T}.
\end{align*}

Then, by Markov's inequality, we have for all $T\geq10$
\begin{align}
    \label{ieq:mudiff_p1}
    \mathbb{P}_{\bm{\theta}^*}\left(\mathbb{P}_T(\Theta_T^c)>\frac{n\log T}{2L T^{1/2-\alpha}}\right)\nonumber
    &\leq
    \mathbb{P}_{\bm{\theta}^*}(\tilde{\mathcal{E}}_T^c)+\mathbb{P}_{\bm{\theta}^*}\left(\tilde{\mathcal{E}}_t\cap\left(\mathbb{P}_T(\Theta_T^c)>\frac{n\log T}{2L T^{1/2-\alpha}}\right)\right)\\
    &\leq
    \frac{1}{4n \log^2 T \cdot T^{2\alpha}}+
    \frac{4L}{n\log T \cdot T^{1/2+\alpha}}\\
    &\leq
    \frac{1}{\log^2 T \cdot T^{2\alpha}},\nonumber
\end{align}
where the first line comes from a decomposition of the event, the second line comes from Markov's inequality, and the last line comes from the fact that $n\geq1$ and $L\leq1$.

Next, from the condition \eqref{eqn:equiv},
$$
    \mathcal{W}\left(\prob\left((\bm{x}^*_t,\bm{a}_t,b)|\bm{\theta}\right), \prob\left((\bm{x}^*_t,\bm{a}_t,b)|\bm{\theta}^*\right)\right)
\geq L\cdot \|\bm{\theta}- \bm{\theta}^*\|_2.
$$
Consequently, for any $\bm{\theta}\in\Theta_T$, we have
\begin{align}
    \label{ieq:w2norm}
    \|\bm{\theta}-\bm{\theta}^*\|_2
    \leq
    \max\left(8,8\bar{\kappa}\right)\frac{n \cdot \log T}{L\cdot T^{1/2-\alpha}}.
\end{align}
Combining \eqref{ieq:mudiff_p1} and \eqref{ieq:w2norm}, we have with probability no less than $\frac{1}{\log^2 T\cdot T^{2\alpha}}$
\begin{align*}
    \mathbb{E}_T\left[\|\bm{\theta}_T-\bm{\theta}^*\|_2\right]
    &\leq
    \max\left(8,8\bar{\kappa}\right)\frac{n \cdot \log T}{L\cdot T^{1/2-\alpha}}\cdot \mathbb{P}_T(\Theta_T)
    +
    2\bar{\kappa}\cdot\mathbb{P}_T(\Theta_T^c)\\
    &\leq
    \max\left(8,8\bar{\kappa}\right)\frac{n \cdot \log T}{L\cdot T^{1/2-\alpha}}
    +
    \max\left(1,1\bar{\kappa}\right)\frac{n \cdot \log T}{L\cdot T^{1/2-\alpha}}=
    \max\left(9,9\bar{\kappa}\right)\frac{n \cdot \log T}{L\cdot T^{1/2-\alpha}}.
\end{align*}
Here, the first inequality is obtained by the fact the the maximum distance between two different parameters are bounded by $2\bar{\kappa}$, and the second inequality is obtained by \eqref{ieq:w2norm}.
\end{proof}

\subsection{Proof of Corollary \ref{predict_coro_gaussian}}

\begin{proof}
    Recall the definition of $\tilde{\Theta}_T$
    $$
        \tilde{\Theta}_T
        = \left\{\bm{\theta}\in \Theta: D_{TV}\left(\prob\left((\bm{x}^*,\bm{a},b)|\bm{\theta}\right), \prob\left((\bm{x}^*,\bm{a},b)|\bm{\theta}^*\right)\right)\le \max\left(8,8{\bar{\kappa}}\right)\frac{\sqrt{n} \cdot \log T}{T^{1/2-\alpha}}\right\}.
    $$
In a similar way as \eqref{ieq:mudiff_p1}, we obtain
    \begin{align*}
    \mathbb{P}_{\bm{\theta}^*}\left(\mathbb{P}_T(\tilde{\Theta}_T^c)>\frac{\sqrt{n}\log T}{2 T^{1/2-\alpha}}\right)
    &\leq
    \frac{1}{\log^2 T \cdot T^{2\alpha}}.
    \end{align*}
    Then we utilize the total variation distance to bound the difference between optimal solutions. From the integral representation of the total variation distance, we have, for any $\bm{\theta}$,
    \begin{align}
    \label{eq:tveq}
        D_{TV}\left(\prob\left((\bm{x}^*,\bm{a},b)|\bm{\theta}\right), \prob\left((\bm{x}^*,\bm{a},b)|\bm{\theta}^*\right)\right)
        =
        \frac{1}{2}\int_{(\bm{x}^*,\bm{a},{b})} \left|1-\frac{\prob\left((\bm{x}^*,\bm{a},b)|\bm{\theta}^*\right)}{\prob\left((\bm{x}^*,\bm{a},b)|\bm{\theta}\right)}\right| \prob\left((\mathrm{d}\bm{x}^*,\mathrm{d}\bm{a},\mathrm{d}{b})|\bm{\theta}\right).
    \end{align}
    On the right hand side, the ratio $\frac{\prob\left((\bm{x}^*,\bm{a},b)|\bm{\theta}\right)}{\prob\left((\bm{x}^*,\bm{a},b)|\bm{\theta}^*\right)}$ is the probability that the optimal solutions (corresponding to $\bm{\theta}^*$ and $\bm{\theta}$) coincide for a fixed pair of $(\bm{a},b)$. Thus, the integration calculates the probability that the the optimal solution corresponding to $\bm{\theta}$ is different from the optimal solution corresponding to $\bm{\theta}^*$. 
    Let $\bm{\theta}_T$ be a random parameter drawn from the posterior distribution. 
    Denote $\tilde{\bm{x}}^*$ as the optimal solution corresponding to $\bm{\theta}_T$ given $(\bm{a},b)$, and ${\bm{x}}^*$ as the optimal solution corresponding to $\bm{\theta}^*$ given $(\bm{a},b)$. We have
    \begin{align}
    \label{ieq:bddx}
        \mathbb{P}_T({\bm{x}}^*\not=\tilde{\bm{x}}^*)
        \leq
        2D_{TV}\left(\prob\left((\bm{x}^*,\bm{a},b)|\bm{\theta}_T\right), \prob\left((\bm{x}^*,\bm{a},b)|\bm{\theta}^*\right)\right).
    \end{align}
    Thus, with probability no less than $1- \frac{1}{\log^2 T \cdot T^{2\alpha}}$, we have
    \begin{align*}
        \mathbb{E}[\|\bm{x}^*-\tilde{\bm{x}}^*\|_2]
        &\leq
        \sqrt{n}\mathbb{P}_T({\bm{x}}^*\not=\tilde{\bm{x}}^*)\\
        &\leq
        2\sqrt{n}D_{TV}\left(\prob\left((\bm{x}^*,\bm{a},b)|\bm{\theta}\right), \prob\left((\bm{x}^*,\bm{a},b)|\bm{\theta}^*\right)\right),\\
        &\leq
        \max\left(16,16{\bar{\kappa}}\right)\frac{\sqrt{n} \cdot \log T}{T^{1/2-\alpha}}
    \end{align*}
    where the first line comes from the fact that the maximum distance between any two solutions is bounded by $\sqrt{n}$, the second line comes from \eqref{ieq:bddx}, and the last line comes from the definition of $\tilde{\Theta}_T$.
    
We remark that the statement is also true for general distributions by replacing the probability ratio in \eqref{eq:tveq} with the Radon–Nikodym derivative.
    
\end{proof}

\section{Proof of Section \ref{sec_delta}}
In this section, we prove the results in Section \ref{sec_delta}.

\subsection{Proof of Lemma \ref{lemma_linear}}

\begin{proof}
For an observation of $(\bm{x}^*,\bm{a}, b)$, let
$$
\mathcal{U}\coloneqq\left\{\bm{u}\in \mathcal{S}^{n-1}: \bm{x}^* \text{ is an optimal solution of } \text{LP}(\bm{u}, \bm{a}, b)\right\}.
$$
From the LP's optimality conditions, we know that $\bm{u}\in\mathcal{U}$ if and only if the following three inequalities hold:
\begin{align}
    -\min_{\{i:x_i^*>0\}}\left(\frac{u_i}{a_i}\right)&\leq 0, \label{opc1}\\
    \max_{\{i:x_i^*=0,u_i\not=0\}}\frac{u_i}{a_i}&\leq0, \text{if $\bm{a}^\top\bm{x}^*<b$} \label{opc2}\\
    \max_{\{i:x_i^*=0,u_i>0\}}\frac{u_i}{a_i}-\min_{\{i:x_i>0\}}\frac{u_i}{a_i} &\leq 0, \text{ if $\bm{a}^\top\bm{x}^*=b$} \label{opc3}.
\end{align}
All the three inequalities can be expressed by linear constraints, so we finish the proof.
\end{proof}

\subsection{Proof of Proposition \ref{prop:sim}}
\begin{proof}
The proof is a direct application of Hoeffding's inequality. Denote $X_t$ as the indicator function of $\bm{u}_t\not=\bm{u}^*$. By Hoeffding's inequality, we have
\begin{align*}
    \mathbb{P}\left(\frac{1}{T}\sum\limits_{t=1}^{T}X_t\leq\delta+\frac{\log T}{\sqrt{T}}\right)
    \leq
    \frac{1}{T}.
\end{align*}
Then, it is sufficient to show that 
\begin{align*}
    \max_{\bm{u}\in\mathcal{S}^{n-1}} \left\vert\frac{1}{T}\sum\limits_{t=1}^{T} I_{\mathcal{U}_t}(\bm{u})-\frac{1}{T}\sum\limits_{t=1}^{T}I_{\bar{\mathcal{U}}_t}(\bm{u})\right\vert
    \leq
    \frac{1}{T}\sum\limits_{t=1}^{T}X_t.
\end{align*}\
To see this, we have
\begin{align*}
    \max_{\bm{u}\in\mathcal{S}^{n-1}} \left\vert\frac{1}{T}\sum\limits_{t=1}^{T} I_{\mathcal{U}_t}(\bm{u})-\frac{1}{T}\sum\limits_{t=1}^{T}I_{\bar{\mathcal{U}}_t}(\bm{u})\right\vert
    &\leq
    \max_{\bm{u}\in\mathcal{S}^{n-1}} \left\vert\frac{1}{T}\sum\limits_{\{t:X_t=1\}} I_{\mathcal{U}_t}(\bm{u})-\frac{1}{T}\sum\limits_{\{t:X_t=1\}}I_{\bar{\mathcal{U}}_t}(\bm{u})\right\vert\\
    &\leq
    \max_{\bm{u}\in\mathcal{S}^{n-1}} \frac{1}{T}\sum\limits_{\{t:X_t=1\}}\left\vert I_{\mathcal{U}_t}(\bm{u})-I_{\bar{\mathcal{U}}_t}(\bm{u})\right\vert\\
    &\leq
    \max_{\bm{u}\in\mathcal{S}^{n-1}} \frac{1}{T}\sum\limits_{\{t:X_t=1\}}1\\
    &=
    \frac{1}{T}\sum\limits_{t=1}^{T}X_t,
\end{align*}
where the first inequality is obtained by the fact that $I_{\mathcal{U}_t}(\bm{u})=I_{\bar{\mathcal{U}}_t}(\bm{u})$ if $X_t=0$, the second line comes from Jensen's inequality for the absolute value function, and the last two lines come directly from the property of indicator functions.
\end{proof}

\subsection{Proof of Proposition \ref{prop_generalize}}
\begin{proof}
    In this part, we will show a stronger statement that, for all $\gamma>0$,
    $$
          \max_{\bm{u}:\|\bm{u}\|_2\leq1} \  -\mathrm{Acc}(\bm{u}) + 
            \frac{1}{T}\sum\limits_{t=1}^{T}I_{\mathcal{U}_{t}(\gamma)}(\bm{u})
            \le 
            4\sqrt{\frac{\log(T)}{{\underline{a}^2\gamma^2 T}}}+
            6\sqrt{\frac{\log(T/\epsilon)}{{T}}},
    $$
    where the uniform bound holds for all $\bm{u}$ in the unit ball (instead of unit sphere). 

    
We will utilize Lemma \ref{lem:pacbay} to prove the above inequality. For any $\bm{u}_0$ in the unit ball, let $\mathbb{Q}=\mathcal{N}(0,\tau^2\bm{I}_n)$ and $\mathbb{Q}_0=\mathcal{N}(\bm{u}_0,\tau^2\bm{I}_n)$ be two normal distributions over the estimated parameter space, where $\bm{I}_n$ is the $n$-dimensional identity matrix and $\tau$ is a constant to be determined. We remark that our choice of $\mathbb{Q}_0$ and $\mathbb{Q}$ will not affect the distribution of $(\bm{u}_t,\bm{a}_t,b)$, which depends on $\mathcal{P}_{\bm{a},b}$, $\mathcal{P}_{u}'$, $\delta$, and $\bm{u}^*$. Thus, by Lemma \ref{lem:pacbay}, the following inequality holds with probability no less than $1-\epsilon$,
    \begin{align}
    \label{ieq:pacb}
        \mathbb{E}_{\mathbb{Q}_0}\left[\mathbb{E}\left[I_{(\mathcal{U}(\gamma))^c}({\bm{u}})\right]\right]
        \leq
        \frac{1}{T}\mathbb{E}_{\mathbb{Q}_0}\left[\sum\limits_{t=1}^{T}I_{(\mathcal{U}_t(\gamma))^c}({\bm{u}})\right]
        +
        \sqrt{\frac{D_{KL}(\mathbb{Q},\mathbb{Q}_0)+\log\frac{T}{\epsilon}+2}{2T-1}},
    \end{align}
where $\mathcal{E}^c$ denotes the complement of a set $\mathcal{E}.$ We note that on the left-hand-side, the inner expectation is taken with respect to the indicator function, and the outer expectation is taken with respect to $\bm{u}\sim\mathbb{Q}_0.$

Then we have
    \begin{align}
        \label{ieq:pacbay}
\mathbb{E}_{\mathbb{Q}_0}\left[\mathbb{E}\left[I_{(\mathcal{U}(\gamma))^c}({\bm{u}})\right]\right]
        &\leq
         \frac{1}{T}\mathbb{E}_{\mathbb{Q}_0}\left[\sum\limits_{t=1}^{T}I_{(\mathcal{U}_t(\gamma))^c}({\bm{u}})\right]
        +
        \sqrt{\frac{\frac{\|{\bm{u}}_0\|_2^2}{2\tau^2}+\log\frac{T}{\epsilon}+2}{2T-1}}\\
        &\leq
        \frac{1}{T}\mathbb{E}_{\mathbb{Q}_0}\left[\sum\limits_{t=1}^{T}I_{(\mathcal{U}_t(\gamma))^c}({\bm{u}})\right]
        +
        2\sqrt{\frac{\frac{\|{\bm{u}}_0\|_2^2}{2\tau^2}+\log\frac{T}{\epsilon}}{T}}.\nonumber
    \end{align}
Here, the first line is obtained by  calculating of the KL-divergence between two Gaussian distributions, and the second line is a further simplification of the second line.
    
Now we analyze the left-hand-side and show that
    \begin{align}
    \label{ieq:0gamma}
        I_{(\mathcal{U}(0))^c}({\bm{u}}_0)-2\exp\left(-\frac{\underline{a}^2\gamma^2}{2\tau^2}\right)
        \leq
        \mathbb{E}_{\mathbb{Q}_0}\left[I_{(\mathcal{U}(\gamma))^c}({\bm{u}})\right]
        \leq
        I_{(\mathcal{U}(2\gamma))^c}({\bm{u}}_0)+2\exp\left(-\frac{\underline{a}^2\gamma^2}{2\tau^2}\right).
    \end{align}

To see this, for a fixed $(\bm{x}^*,\bm{a},b)$, if $I_{\mathcal{U}(0)^c}({\bm{u}}_0)=1$, at least one inequalities from \eqref{opc1} to \eqref{opc3} is violated. If \eqref{opc1} does not hold for ${\bm{u}}_0$ while $I_{(\mathcal{U}(\gamma))^c}(\bm{u})=0$, there exists at least one $i\in\{1,...,n\}$ such that 
    $$
    \frac{({\bm{u}}_0)_i}{a_i}\geq0,\ 
    \frac{u_i}{a_i}<-\gamma.
    $$ The corresponding probability is no less than $\mathbb{P}((\hat{\bm{u}}-\hat{\bm{u}}_0)_i\leq -\underline{a}\gamma)$, which is bounded by $\exp(-\frac{\underline{a}^2\gamma^2}{2\tau^2})$. Following the same analysis for \eqref{opc2} and \eqref{opc3}, we have with probability no more than $2\exp(-\frac{\underline{a}^2\gamma^2}{2\tau^2})$, $$I_{\mathcal{U}^c}({\bm{u}})=0, \text{ while } I_{(\mathcal{U}(\gamma))^c}({\bm{u}}_0)=1,$$
    which gives the left part of \eqref{ieq:0gamma}. The right part follows the same analysis. 

    Thus, let $\tau^2=\frac{\underline{a}^2\gamma^2}{2\log T}$. From \eqref{ieq:0gamma} and \eqref{ieq:pacbay}, we have, with probability no less than $1-\epsilon$, the following inequalities hold simultaneously for all ${\bm{u}}_0$ in the unit ball,
    \begin{align}
    \label{ieq:gen2g}
        \mathbb{E}[I_{\mathcal{U}^c}({\bm{u}}_0)]
        &\leq
        \frac{1}{T}\sum\limits_{t=1}^{T}I_{(\mathcal{U}(2\gamma))^c}({\bm{u}}_0)
        +
        4\exp(-\frac{\underline{a}^2\gamma^2}{2\tau^2})
        +
        2\sqrt{\frac{\frac{\|{\bm{u}}_0\|_2^2}{2\tau^2}+\log\frac{T}{\epsilon}}{T}\nonumber}\\
        &\leq
        \frac{1}{T}\sum\limits_{t=1}^{T}I_{(\mathcal{U}(2\gamma))^c}({\bm{u}}_0)
        +
        \frac{4}{T}
        +
        2\sqrt{\frac{\log T}{\underline{a}^2\gamma^2T}+\frac{\log(T/\epsilon)}{T}}\\
        &\leq
        \frac{1}{T}\sum\limits_{t=1}^{T}I_{(\mathcal{U}(2\gamma))^c}({\bm{u}}_0)
        +
        2\sqrt{\frac{\log T}{\underline{a}^2\gamma^2T}}+6\sqrt{\frac{\log(T/\epsilon)}{T}},\nonumber
    \end{align}
    where the first inequality is obtained by plugging \eqref{ieq:0gamma} into both sides of \eqref{ieq:pacbay}, the second line is obtained by plugging the value of $\tau$ into the inequality, and the last line is obtained by the convexity of the square root function. Finally, with probability no less than $1-\epsilon$
    \begin{align*}
        \sup_{\{\bm{u}:\|\bm{u}\|_2\leq1\}}
        -\text{Acc}({\bm{u}})+\frac{1}{T}
        \sum\limits_{t=1}^{T}I_{\mathcal{U}_t(\gamma)}({\bm{u}})
        &=
        \sup_{\{\bm{u}:\|\bm{u}\|_2\leq1\}}
        \mathbb{E}[I_{\mathcal{U}^c}({\bm{u}})]-\frac{1}{T}
        \sum\limits_{t=1}^{T}I_{(\mathcal{U}_t(\gamma))^c}({\bm{u}})\\
        &\leq
        4\sqrt{\frac{\log T}{\underline{a}^2\gamma^2T}}+6\sqrt{\frac{\log(T/\epsilon)}{T}},
    \end{align*}
    where the first line comes from the fact that $I_{\mathcal{A}}(\bm{u})=1-I_{\mathcal{A}}(\bm{u})$ holds for any point $\bm{u}$ and set $\mathcal{A}$, and the second line comes from \eqref{ieq:gen2g} with replacing $2\gamma$ by $\gamma$.
\end{proof}

\subsection{Proof of Theorem \ref{thm_generalize}}

In this part, we assume that the optimal solution $\bm{x}^*_t$ satisfies $$(\bm{x}^*_t)_i=0 \text{ if } (\bm{u}_t)_i\leq0 \text{ for all $t=1,...,T$ and $i=1,...,n$}.$$ This assumption is natural in that if purchasing the $i$-th item brings no positive utility, the customer will not purchase the item.
    
The proof is divided into two parts. In the first part, we show that, if $\gamma$ is sufficiently small, with high probability, the $\gamma$ margin indicator $I_{\mathcal{U}(\gamma)}(\bm{u}^*)$ is almost same as $I_{\mathcal{U}}(\bm{u}^*)$. In the second part, we combine the first part and Proposition \ref{prop_generalize} to draw the conclusion. Intuitively, the smaller $\gamma$ is, the smaller the difference between $I_{\mathcal{U}(\gamma)}(\bm{u}^*)$ and $I_{\mathcal{U}}(\bm{u}^*)$ we will have, and the worse the bound in Proposition \ref{prop_generalize} will be. Then the second part trades off between these two aspects and chooses the best $\gamma$. The formal proof is stated as below.
    
\begin{proof}    
    Let
    $$
        \bar{\mathcal{U}}\coloneqq\left\{\bm{u}\in \mathcal{S}^{n-1}: \bar{\bm{x}}_t^* \text{ is an optimal solution of } \text{LP}(\bm{u}, \bm{a}, b)\right\},
    $$    
    where $\bar{\bm{x}}_t^*$ is an optimal solution of $\text{LP}(\bm{u}^*, \bm{a}, b)$.
    Now, let us show that $I_{\bar{\mathcal{U}}}(\bm{u}^*)=I_{\bar{\mathcal{U}}(\gamma)}(\bm{u}^*)$ holds with probability no less than $1-\frac{4n^2\gamma}{\underline{a}\min\limits_{i:u_i^*\not=0}|u_i^*|}$.
    By definition of $\bar{\mathcal{U}}$ and the assumption that $a_i\leq 1$ for all $i=1,...,n$, we have $\bm{x}^*\in\bar{\mathcal{U}}$ and
    \begin{align*}
        -\min_{\{i:\bar{x}^*_i>0\}}\left(\frac{u_i^*}{a_i}\right)&\leq
        -\min\limits_{i:u_i^*\not=0}|u_i^*|,\\
        \max_{\{i:\bar{x}^*_i=0,u_i^*\not=0\}}\left(\frac{u_i^*}{a_i}\right)&\leq
        -
        \min\limits_{i:u_i^*\not=0}|u_i^*|,
    \end{align*}
    for any $\bm{a}$, $b$ and $\bm{u}^*$. Moreover, for any $i,j=1,...,n$ such that $u_i^*$, we have
    $$
        \left\vert\frac{u_j^*}{a_j}-\frac{u_i^*}{a_i}\right\vert\leq
        \gamma
        \Leftrightarrow
        \frac{u_i^*a_j^*}{u_j^*}-\frac{a_ia_j}{u_j^*}\gamma\leq a_i\leq\frac{u_i^*a_j^*}{u_j^*}+\frac{a_ia_j}{u_j^*}\gamma,
    $$
    which happens with probability no more than $\frac{4\bar{p}\gamma}{\min\limits_{i:u_i^*\not=0}|u_i^*|}$. Since there are at most $n^2$ index pairs, with probability no less than $1-\frac{4n^2\bar{p}\gamma}{\min\limits_{i:u_i^*\not=0}|u_i^*|}$, we have
    \begin{align}
    \label{ieq:goodgamma}
        I_{\bar{\mathcal{U}}(\gamma)}(\bm{u}^*)
        =
        I_{\bar{\mathcal{U}}}(\bm{u}^*),
        \text{ for all $b$, $\gamma\leq\min\limits_{i:u_i^*\not=0}|u_i^*|,$ and fixed $\bm{u}^*\in\mathcal{S}^{n-1}$}.
    \end{align}
Then, similar to the proof of Lemma \ref{lemma_linear} with Hoeffding's inequality, we have 
    \begin{align*}
        \mathbb{P}\left(\left|\sum\limits_{t=1}^{T}I_{\bar{\mathcal{U}}_t(\gamma)}(\bm{u}^*)-\sum\limits_{t=1}^{T}I_{\bar{\mathcal{U}}_t}(\bm{u}^*)\right|>2n^2\bar{p}\gamma+\frac{\log T}{\sqrt{T}}\right)
        &\leq
        2\exp\left(-\frac{T\log T}{T}\right)=\frac{2}{T},
    \end{align*}
    where the randomness in the above inequalities comes only from $(\bm{a},b)$. Here, the inequality comes from the analysis of \eqref{ieq:goodgamma} and Hoeffding's inequality.

    Next, we combine the above analysis and Proposition \ref{prop_generalize}. Let $\hat{\bm{u}}$ be the optimal solution of the problem $OPT_{\delta}(\gamma)$. Then, we have, for any $\gamma\leq\min\limits_{i:u_i^*\not=0}|u_i^*|$, with probability no more than $1-\epsilon-\frac{3}{T}$,
    \begin{align}
    \label{ieq;accbd}
        \text{Acc}(\hat{\bm{u}})&\geq
        \sum\limits_{t=1}^{T}I_{\mathcal{U}_t(\gamma)}(\hat{\bm{u}})-4\sqrt{\frac{\log T}{\underline{T}}}-6\sqrt{\frac{\log (T/\epsilon)}{T}}\nonumber\\
        &\geq
        I_{\mathcal{U}_t(\gamma)}({\bm{u}^*})-4\sqrt{\frac{\log T}{\underline{a}^2\gamma^2T}}-6\sqrt{\frac{\log (T/\epsilon)}{T}}\nonumber\\
        &\geq
        I_{\bar{\mathcal{U}}_t(\gamma)}({\bm{u}^*})-\delta-4\sqrt{\frac{\log T}{\underline{a}^2\gamma^2T}}-7\sqrt{\frac{\log (T/\epsilon)}{T}}\\
        &\geq
        I_{\bar{\mathcal{U}}_t}({\bm{u}^*})-\delta-\frac{4n^2\bar{p}\gamma}{\min\limits_{i:u_i^*\not=0}|u_i^*|}-4\sqrt{\frac{\log T}{\underline{a}^2\gamma^2T}}-8\sqrt{\frac{\log (T/\epsilon)}{T}}\nonumber\\
        &\geq
        1-\delta-\frac{4n^2\bar{p}\gamma}{\min\limits_{i:u_i^*\not=0}|u_i^*|}-4\sqrt{\frac{\log T}{\underline{a}^2\gamma^2T}}-8\sqrt{\frac{\log (T/\epsilon)}{T}},\nonumber
    \end{align}
    where the first inequality is obtained by Proposition \ref{prop_generalize}, the second line is obtained by the optimality of $\hat{\bm{u}}$, the third line is obtained by a similar statement as Proposition \ref{prop:sim}, the fourth line comes from \eqref{ieq:goodgamma}, and the last line is obtained by the fact that $I_{\bar{\mathcal{U}}_t}({\bm{u}^*})=1$ for all $t$. Finally, plug $\epsilon=\frac{1}{T}$ and $\gamma = \frac{1}{4n^2T^{1/4}}$ into \eqref{ieq;accbd}. If $T$ is sufficient large such that $\frac{1}{4n^2T^{1/4}}\leq\min\limits_{i:u_i^*\not=0}|u_i^*|$, we have,  with probability no less than $1-\frac{4}{T}$,
    \begin{align*}
        \text{Acc}(\hat{\bm{u}})
        &\geq
        1-\delta-
        \frac{\bar{p}}{T^{1/4}}-
        4\sqrt{\frac{n^4\log T}{\underline{a}^2T}}
        -
        16\sqrt{\frac{\log T}{T}}
        \geq
        1-\delta-\frac{\bar{p}}{\min\limits_{i:u_i^*\not=0}|u_i^*|\cdot T^{1/4}}-20\frac{n^2\log T}{\underline{a}T^{1/4}}.
    \end{align*}

    We remark that we can further reduce the dependency of $n$ by setting $\gamma=\frac{1}{4nT^{1/4}}$.
\end{proof}

\section{Gaussian setting with Known Concentration Parameter}

In this section, we revisit the Gaussian setting and discuss two methods to learn the mean vector $\bm{\mu}$ when the concentration parameter $\kappa$ is known. Specifically, we consider the following two cases : (i) We can design the constraint pair $(\bm{a}_t,b_t)$ for all $t=1,...,T$, (ii) We can only design $\bm{a}_t$ for all $t$ while we know a lower bound $\underline{b}$ such that $b_t\geq\underline{b}$ for all $t$. The basic idea is to estimate $\mathbb{P}(u_i>0|\bm{\mu})$ for every $i=1,...,n$ and to use the idea of ``moment matching'' to identify $\bm{\mu}$.

\paragraph{Constraint Design for the First Case}\

For the first case, we set $\bm{a}_t=(1,...,1)\in\mathbb{R}^{n}$ and $b_t=n$ for all $t=1,...,T$. Then, at each time $t$, we have $(\bm{x}^*_t)_i=1$ if and only if $(\bm{u}_t)_i>0$. We can then estimate $\mathbb{P}(u_i>0\vert\bm{\mu})$ by the sample average mean
$$
   \hat{p}_i \coloneqq \frac{1}{T} \cdot\#\{t=1,...,T:(\bm{x}^*_t)_i=1\}.
$$
Next, we estimate $\bm{\mu}$ based on $\hat{p}_i$. 

For any $i=1,..,n$, let $\phi$ be an angle in $[0,\pi]$ satisfying $\cos\phi=\mu_i$. By Lemma 1 from \cite{romanazzi2014discriminant}, we have 
\begin{align}
    \label{von_density}
    \mathbb{P}(u_i>0\vert\bm{\mu})
    =
    \int_{0}^{\pi/2}
    \left(\frac{\kappa}{2\pi}\right)^{1/2}\frac{\sin^{(n-1)/2}\psi\cdot \tilde{I}_{(n-3)/2}(\kappa\sin\phi\sin\psi)}{\sin^{(n-3)/2}\phi\cdot\tilde{I}_{n/2-1}(\kappa)}\cdot\exp\left(\kappa\cos\phi\cos\psi\right)
    \mathrm{d}\psi,
\end{align}
which depends only on $\mu_i$. Thus, with slight abuse of notation, we denote $\mathbb{P}(u_i>0|\mu_i)$ as the probability in \eqref{von_density}. Moreover, if $n=2$, we can have that the above function is strictly increasing. Then, by Lemma 1 from \cite{romanazzi2014discriminant} with the induction method, we can show that the above function is strictly increasing with respect to $\mu_i$ for every fixed $n$ and $\kappa$. For any fixed $n$ and $\kappa$, we first numerically compute \eqref{von_density}. Then, a natural estimate of $\mu_i$ is
$$
    \hat{\mu}_i = \argmin_{\mu'_i\in[-1,1]}\vert\mathbb{P}(u_i>0\vert \mu_i')-\hat{p}_i\vert.
$$
Then, $\hat{\bm{\mu}}=\{\hat{\mu}_1,...,\hat{\mu}_n\}$ is our estimation of $\bm{\mu}$. We can further normalize $\hat{\bm{\mu}}$ in case that $\hat{\bm{\mu}}\not\in\mathcal{S}^{n-1}$.

We remark that this method can be hardly generalized to the case that $\kappa$ is unknown. The reason is that we do not have a similar strictly increasing structure, and that the bijection between the probability \eqref{von_density} and the parameter $\bm{\theta}$ might not exist.

\paragraph{Constraint Design for the Second Case}\

Now we discuss how to design $\bm{a}$ if we cannot control $b$. One difficulty that prevents us applying the same method as in the previous section is that we might have $(\bm{x}_t^*)_i=0$ while $(\bm{u}_t)_i>0$ due to an insufficient budget. However, if we know a lower bound of $\{b_t\}_{t=1}^{T}$, we can still estimate $\mathbb{P}(u_i>0\vert\bm{\mu})$ by dismantling the high-dimensional
estimation problem into a number of low-dimensional problems.

Denote the lower bound as $\underline{b}$. We set the first $\lceil\underline{b}\rceil$ entries in $\bm{a}$ be $1$ and others be $\infty$, where $\lceil\cdot\rceil$ denotes the ceiling function. In this case, we have 
$$
    (\bm{x}^*_t)_i>0 \Leftrightarrow (\bm{u}_t)_i>0 \text{ for all $i\leq\lceil\underline{b}\rceil$ and $t=1,...,T$. }
$$
In this way, we can estimate $\mathbb{P}(u_i\vert\bm{\mu})$ for $i=1,...,\lceil\underline{b}\rceil$ following the previous case. To estimate the probability for other $i>\lceil\underline{b}\rceil$, we can divide the problem into $n/\underline{b}$ parts and estimate the probabilities separately. 


\end{document}